\numberwithin{equation}{section}
\numberwithin{figure}{section}
\theoremstyle{plain}
\newtheorem{thm}{\protect\theoremname}[section]
\theoremstyle{definition}
\newtheorem{defn}[thm]{\protect\definitionname}
\theoremstyle{plain}
\newtheorem{prop}[thm]{\protect\propositionname}
\theoremstyle{remark}
\newtheorem{rem}[thm]{\protect\remarkname}
\theoremstyle{plain}
\newtheorem{cor}[thm]{\protect\corollaryname}
\theoremstyle{plain}
\newtheorem{lem}[thm]{\protect\lemmaname}
\theoremstyle{plain}
\newtheorem{assumption}[thm]{\protect\assumptionname}
\providecommand{\assumptionname}{Assumption}
\providecommand{\corollaryname}{Corollary}
\providecommand{\definitionname}{Definition}
\providecommand{\lemmaname}{Lemma}
\providecommand{\propositionname}{Proposition}
\providecommand{\remarkname}{Remark}
\providecommand{\theoremname}{Theorem}
\begin{document}
\title{Singular Set and Curvature Blow-Up Rate of The Level Set Flow}
\author{Siao-Hao Guo}
\begin{abstract}
\thanks{The research was partially supported by the grant 109-2115-M-002-018-MY3
of National Science and Technology Council of Taiwan. \medskip{}
}Under certain conditions such as the $2$-convexity, a singularity
of the level set flow is of type I (in the sense that the rate of
curvature blow-up is constrained before and after the singular time)
if and only if the flow shrinks to either a round point or a $C^{1}$
curve near that singular point. Analytically speaking, the arrival
time is $C^{2}$ near a critical point if and only if it satisfies
a \L ojasiewicz inequality near the point.
\end{abstract}

\maketitle
\tableofcontents{}

\section{\label{introduction}Introduction}

Given an initial hypersurface $\Sigma_{0}$ in $\mathbb{R}^{n}$ \footnote{Here $n\geq2$ is an integer.}that
is closed,\footnote{That is, compact without boundary.} connected,
smoothly embedded, and strictly mean-convex,\footnote{Namely, the sum of all principal curvatures, with respect to the inward
normal, is strictly positive. In fact, one can start with a mean-convex
hypersurface since the mean curvature flow will immediately become
strictly mean-convex by the strong maximum principle. } its evolution by the mean curvature vector, i.e., the mean curvature
flow, advances monotonically\footnote{Because the strict mean-convexity is preserved along the flow; specifically,
the minimum of the mean curvature of the time-slice is non-decreasing
by the maximum principle.} towards the inside\footnote{By the Jordan-Brouwer separation theorem, $\mathbb{R}^{n}\setminus\Sigma_{0}$
consists of two open connected sets, the inside $\Omega$ and the
outside $\mathbb{R}^{n}\setminus\bar{\Omega}$, where $\Omega$ is
called the region bounded by $\Sigma_{0}$.} until it becomes singular in finite time (cf. \cite{E}). Nevertheless,
there is a unique way to continue the flow through the singularities.

Let $\Omega$ be the domain bounded by $\Sigma_{0}$. Evans and Spruck
in \cite{ES} (see also \cite{CGG}) showed that there is a unique
viscosity solution $u\in C^{0,1}\left(\bar{\Omega}\right)$ to the
following degenerate elliptic equation
\begin{equation}
-\left(\mathrm{I}-\frac{\nabla u}{\left|\nabla u\right|}\varotimes\frac{\nabla u}{\left|\nabla u\right|}\right)\cdotp\nabla^{2}u=1\label{level set flow}
\end{equation}
using the elliptic regularization and the comparison principle for
viscosity solutions. The significance of the solution is that near
every regular point of $u$, that is, 
\begin{defn}
\label{regular point}A point $x\in\Omega$ is called a regular point
of $u$ provided that $u$ is smooth near $x$ with $\nabla u\left(x\right)\neq0$.
Otherwise, it is called a singular point.
\end{defn}

\noindent the equation (\ref{level set flow}) is satisfied pointwisely
and can be rewritten as 
\begin{equation}
-\nabla\cdot\frac{\nabla u}{\left|\nabla u\right|}=\frac{1}{\left|\nabla u\right|},\label{reformulated level set flow}
\end{equation}
meaning that the flow of the level hypersurfaces\footnote{Note that near a regular point, every level set of $u$ is a smoothly
embedded hypersurface. } is a mean curvature flow\footnote{The LHS of (\ref{reformulated level set flow}) is the mean curvature
of the level hypersurface and the RHS is the normal speed of the flow
of the level sets.} with positive mean curvature.\footnote{Because we have $H=\left|\nabla u\right|^{-1}$ by (\ref{reformulated level set flow}),
where $H$ denotes the mean curvature of the level hypersurface.} Moreover, it is proved in \cite{ES} that $\Sigma_{t}=\left\{ u=t\right\} $
prior to the first singular time, where $\Sigma_{t}$ is the mean
curvature flow starting at $\Sigma_{0}$. That is to say, the flow
of the level sets of $u$, which is called a \textbf{level set flow},\footnote{Our definition of a ``level set flow'' is different from (but closely
related to) the convention. For instance, see \cite{I1} to compare
the definitions.} is indeed an extension of the mean curvature flow starting at $\Sigma_{0}=\partial\Omega$.
For each point $x\in\Omega$, $u\left(x\right)$ is the time when
the flow travels through $x$, so $u$ is called the \textbf{arrival
time} of the level set flow. Note that the level set flow sweeps the
domain $\Omega$ and vanishes right after time $t=\max_{\Omega}u$,
which is called the extinction time.

Regarding the regularity of the level set flow, it is shown in \cite{I1}
(see also \cite{W1}) that $\mathcal{H}^{n-1}\lfloor\left\{ u=t\right\} $
defines a Brakke flow of integral varifolds. In particular, Huisken's
monotonicity formula (cf. \cite{H2} and \cite{I2}) holds for the
level set flow; it follows that the flow has finite entropy (cf. \cite{CM1}).
In addition, Ilmanen proved that for almost every $t\in\left(0,\max_{\Omega}u\right)$
the singular set on $u=t$ is of $\mathcal{H}^{n-1}$-measure zero.
Later, White in \cite{W1} showed that the Hausdorff dimension of
the singular set is at most $n-2$; furthermore, he proved in \cite{W2}
and \cite{W3} that the tangent flow (see \cite{I2}) at a singularity
is, up to a rigid motion, $\left\{ \sqrt{-t}\,\mathcal{C}_{k}\right\} _{t\in\left(-\infty,0\right)}$,
where
\begin{equation}
\mathcal{C}_{k}=S_{\sqrt{2\left(n-k-1\right)}}^{n-k-1}\times\mathbb{R}^{k},\label{generalized cylinder}
\end{equation}
for some $k\in\left\{ 0,\cdots,n-2\right\} $. As such, singular points
can be classified according to their associated tangent flows as follows:
\begin{defn}
A singular point is called a 
\[
\left\{ \begin{array}{c}
\textrm{round point,}\quad\textrm{if}\,\,k=0,\\
k\textrm{-cylindrical point,\quad\textrm{if}\,\,}k\in\left\{ 1,\cdots,n-2\right\} ,
\end{array}\right.
\]
where $k$ is the integer in (\ref{generalized cylinder}).\footnote{Whenever the cylindrical points are mentioned, we always assume that
$n\geq3$.} 
\end{defn}

Based on the noncollapsing property of the level set flow, Haslhofer
and Kleiner in \cite{HK} gave the smooth estimates (at regular points)
in terms of the mean curvature. Colding and Minicozzi in \cite{CM4}
established that $u\in C^{1,1}\left(\bar{\Omega}\right)$ and it is
twice differentiable everywhere on $\Omega$; moreover, they proved
that the singular points are exactly the critical points of $u$.
Regarding the further regularity, they proved in \cite{CM5} that
$u\in C^{2}\left(\bar{\Omega}\right)$ if and only if there is exactly
one singular time when the level set flow shrinks to either a round
point or a closed connected $C^{1}$ embedded $k$-manifold consisting
of $k$-cylindrical points, see Theorem \ref{globally C^2}. 

In this paper we give a localized version of Theorem \ref{globally C^2},
namely, 
\begin{thm}
\label{characterization of continuity of Hessian}The Hessian $\nabla^{2}u$
is continuous at a singular point if and only if the point is either
a round point or a $k$-cylindrical point near which the singular
set is a $C^{1}$ embedded $k$-manifold.
\end{thm}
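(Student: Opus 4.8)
The plan is to relate continuity of $\nabla^{2}u$ at the singular point $x_{0}$ to the level sets of $u$ being, uniformly near $x_{0}$ and all the way in to the axis, modelled on the shrinking cylinder $\mathcal{C}_{k}$ of the tangent flow, and to identify this uniformity with the type-I/\L ojasiewicz conditions that force the singular set $\Sigma$ to be a $C^{1}$ manifold. Normalize $x_{0}=0$ and $u(0)=T$. By the cited results of Colding--Minicozzi, $u\in C^{1,1}(\bar{\Omega})$ is twice differentiable on $\Omega$, $\Sigma=\{\nabla u=0\}$, and the tangent flow at $(0,T)$ is, after a rotation, $\{\sqrt{-t}\,\mathcal{C}_{k}\}_{t<0}$ for some $k\in\{0,\dots,n-2\}$; this tangent flow, and hence its axis, being unique, write $\mathbb{R}^{n}=V\oplus W$ with $W$ the $k$-dimensional axis, $V=W^{\perp}$, and $P_{V}$ the orthogonal projection onto $V$. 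Since the blow-up of $u$ at $0$ is the arrival time of the shrinking cylinder, $A:=\nabla^{2}u(0)=-\frac{1}{n-k-1}P_{V}$ (this follows from twice-differentiability and the tangent flow; cf.\ \cite{CM4}). As $u\in C^{1,1}$ and $\nabla u(0)=0$, Taylor's formula with integral remainder shows that continuity of $\nabla^{2}u$ at $0$ \emph{implies} the uniform cylindrical expansion $u(x)=T+\frac12\langle Ax,x\rangle+o(|x|^{2})$ and $\nabla u(x)=Ax+o(|x|)$ as $x\to0$. That expansion alone is strictly weaker: a nondegenerate neckpinch satisfies it, owing to the logarithmic correction in the neck radius, yet there $\nabla^{2}u$ is discontinuous at the pinch point (its values at regular points near the axis are those of a shrinking cap, with the wrong eigenvalues); so establishing the structure of $\Sigma$ must use continuity of $\nabla^{2}u$ in full. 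This theorem is the local counterpart of Theorem~\ref{globally C^2}.

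\textbf{Geometry $\Rightarrow$ continuity of $\nabla^{2}u$.} For $k=0$ (round point): the tangent flow being $S^{n-1}_{\sqrt{2(n-1)}}$, the rescaled flow about $(0,T)$ converges smoothly and exponentially to this sphere (Huisken; see also \cite{CM5}), giving uniform $C^{2}$ control of $u$ at every point of $B_{r}(0)\setminus\{0\}$; with twice-differentiability at $0$ this yields $u\in C^{2}$ near $0$ with $\nabla^{2}u(0)=-\frac{1}{n-1}\mathrm{I}$, so $\nabla^{2}u$ is continuous at $0$. For $k\ge1$ with $\Sigma\cap B_{r}(0)$ a $C^{1}$ embedded $k$-manifold of $k$-cylindrical points: at each such $p$ the tangent flow's axis equals $T_{p}\Sigma$ (both $k$-dimensional, with $T_{p}\Sigma$ contained in the axis by the first-order expansion of $\nabla u$ at $p$), so the flow collapses onto $\Sigma$ and $u\equiv T$ there; by the noncollapsing estimates of Haslhofer--Kleiner \cite{HK} and the uniqueness of cylindrical tangent flows, I would show the flow has, near each $p$, a cylindrical canonical neighborhood on the scale $\mathrm{dist}(\cdot,\Sigma)$ with $C^{2}$-bounds uniform in $p$; letting $x\to0$ through regular points, the neighborhood at the nearest point $p_{x}\in\Sigma$ forces $\nabla^{2}u(x)\to A$ (as the axis at $p_{x}$ tends to $W$, $\Sigma$ being $C^{1}$), while $\nabla^{2}u(p)=-\frac{1}{n-k-1}P_{(T_{p}\Sigma)^{\perp}}\to A$ for $p\in\Sigma$ near $0$; hence $\nabla^{2}u$ is continuous at $0$. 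Equivalently, the $C^{1}$-manifold hypothesis first makes $0$ a type-I singularity, which yields a \L ojasiewicz inequality at $0$ and the same conclusion.

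\textbf{Continuity of $\nabla^{2}u$ $\Rightarrow$ geometry.} Assume $\nabla^{2}u$ is continuous at $0$. If $0$ is a round point ($k=0$) the first alternative holds and there is nothing to prove, so suppose $0$ is $k$-cylindrical with $k\ge1$; I must show $\Sigma\cap B_{r'}(0)$ is a $C^{1}$ embedded $k$-manifold. From $\nabla u(x)=Ax+o(|x|)$ one gets only $\Sigma\cap B_{r}(0)\subset\{|P_{V}x|\le\epsilon(r)|x|\}$ with $\epsilon(r)\to0$, so $\Sigma$ is tangent to $W$ at $0$; the real work is to upgrade this to a manifold. The plan: since $\nabla^{2}u(x)\to A$ also at regular points arbitrarily close to the axis $W$, the level sets $\{u=t\}$ near $0$ are uniformly cylindrical there --- their second fundamental forms have the neck profile of $\mathcal{C}_{k}$, not a cap profile --- which is exactly a type-I curvature bound $C/\sqrt{|t-T|}$ on $\{u=t\}\cap B_{r}(0)$ valid \emph{on both sides} of the singular time. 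Feeding this into Colding--Minicozzi's uniqueness and structure machinery --- the \L ojasiewicz--Simon inequality for Huisken's $F$-functional near $\mathcal{C}_{k}$ --- gives uniqueness of the tangent flow at every singular point near $0$ with a continuously varying axis, and hence that $\Sigma\cap B_{r'}(0)$ is a $C^{1}$ embedded $k$-manifold of $k$-cylindrical points.

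I expect the main obstacle to be this last upgrade: passing from ``$\nabla^{2}u$ close to $A$ at regular points near $0$'' (equivalently, a type-I curvature bound before and after $T$) to the \emph{structural} statement that $\Sigma$ is a $C^{1}$ embedded $k$-manifold. This has three delicate parts: (i) extracting a bona fide type-I estimate near the axis from merely pointwise Hessian control; (ii) the side $t>T$, where the level set flow continues and one must rule out non-manifold pieces of $\Sigma$ accumulating at $0$ from the already-swept region --- this is the role of the ``after the singular time'' half of type I, and the reason the hypothesis must constrain $\nabla^{2}u$ rather than only the flow before time $T$; and (iii) running the \L ojasiewicz argument to promote ``all tangent planes of $\Sigma$ near $0$ lie close to $W$'' into a genuine $C^{1}$ graph. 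By comparison, the round-point case and, in the implication geometry $\Rightarrow$ continuity, the patching of the local cylindrical models along $\Sigma$ should be routine once these ingredients are available.
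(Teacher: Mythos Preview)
Your treatment of the implication ``geometry $\Rightarrow$ continuity of $\nabla^{2}u$'' is in line with the paper's: the key input is a \emph{uniform} cylindrical scale along the $C^{1}$ singular manifold (this comes from \cite{CM2,CM3}, summarized in the paper as Remark~\ref{Reifenberg}, rather than from \cite{HK}), which gives $\nabla^{2}u$ close to $\nabla^{2}u(p)$ outside the cone at each nearby singular $p$, together with continuity of $p\mapsto\nabla^{2}u(p)$ along $\mathcal{S}$ since the axis varies $C^{0}$. The round case is standard.

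For the implication ``continuity of $\nabla^{2}u$ $\Rightarrow$ $C^{1}$ structure'' your proposed route---continuity $\Rightarrow$ type~I $\Rightarrow$ \L ojasiewicz $\Rightarrow$ $C^{1}$---has a genuine gap, essentially the one you flag as obstacle (i). From $\nabla^{2}u(x)\to A=-\tfrac{1}{n-k-1}P_{V}$ you get $\nabla u(x)=Ax+o(|x|)$, so $|\nabla u(x)|\sim\tfrac{1}{n-k-1}|P_{V}x|$ only when $|P_{V}x|$ is comparable to $|x|$; inside the cone about $W$ the $o(|x|)$ term can swamp $|P_{V}x|$, and the \L ojasiewicz inequality $|u-u(0)|^{1/2}\le\beta|\nabla u|$ does \emph{not} follow. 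In the paper's logic this is reflected in the fact that Proposition~\ref{Lojasiewicz inequality} (\L ojasiewicz) already \emph{uses} the $C^{1}$ structure of $\mathcal{S}$ via Theorem~\ref{continuity of Hessian}, so your route would be circular. Moreover, the converse ``type~I $\Rightarrow$ regular singular point'' is established in the paper only for $k=1$ (Section~\ref{neckpinch singularities}), and with substantial effort; invoking it for general $k$ is not available.

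The paper's argument for this direction is more elementary and completely sidesteps type~I. Fix a $z$-slice $\{z=z_{0}\}$ and let $x_{0}=(y_{0},z_{0})$ be a maximizer of $u(\cdot,z_{0})$; asymptotic cylindricality forces $y_{0}$ inside the cone and $\nabla u(x_{0})\in\{0\}\times\mathbb{R}^{k}$. If $x_{0}$ were regular, plug $N=\nabla u/|\nabla u|\in\{0\}\times\mathbb{R}^{k}$ into the PDE $-(\mathrm{I}-N\otimes N)\cdot\nabla^{2}u=1$: the continuity hypothesis $|\nabla^{2}u(x_{0})-A|$ small, together with $-(\mathrm{I}-N\otimes N)\cdot A=\tfrac{n-k}{n-k-1}>1$, gives $-(\mathrm{I}-N\otimes N)\cdot\nabla^{2}u(x_{0})>1$, a contradiction. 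Hence every slice carries a critical point; the Hessian discriminant (Remark~\ref{discriminant}) forces it to be $k$-cylindrical, so $\mathcal{S}_{k}$ fills out the Lipschitz graph of Theorem~\ref{rectifiability}, and Remark~\ref{Reifenberg} (from \cite{CM3}) upgrades this to a $C^{1}$ embedded $k$-manifold with the stated tangent spaces. This PDE-contradiction at the slice maximum is the missing idea in your proposal.
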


\noindent To distinguish them from other singular points, such points
(as stated in Theorem \ref{characterization of continuity of Hessian})
are called \textbf{regular singular points} (see Definition \ref{regular singular point}).
As is shown in \cite{CM6}, if $p$ is a regular singular point, then
$u$ satisfies a \L ojasiewicz inequality
\[
\left|u\left(x\right)-u\left(p\right)\right|^{\frac{1}{2}}\leq\beta\left|\nabla u\left(x\right)\right|
\]
in a neighborhood of $p$ (see Proposition \ref{Lojasiewicz inequality}).
Since $\left|\nabla u\right|^{-1}$ is the mean curvature (see (\ref{reformulated level set flow})),
which is comparable with the second fundamental form\footnote{In other words, the largest principal curvature is comparable with
the normal speed of the flow.} by the pinching estimate in \cite{HK}, the \L ojasiewicz inequality
can be interpreted as the rate of blow-up of the second fundamental
form does not exceed the rate of $\left|t-u\left(p\right)\right|^{-\frac{1}{2}}$
near the singularity in space-time (see (\ref{classical type I singularity})).
Such a singularity is then called a \textbf{type I singularity} (see
Definition \ref{type I singularity}) of the level set flow. Unlike
the traditional definition of a type I singularity (of the mean curvature
flow) where the rate of blow-up of curvature is constrained only before
the (first) singular time, here the constraint is also imposed to
the flow past the singular time.\footnote{Because we require the \L ojasiewicz inequality to hold in a neighborhood
of $p$ in $\Omega$.} Note that in the case of a regular singular point, the flow locally
vanishes after the singular time; thus, the constraint is void after
the singular time. However, in the scenario of a neckpinch where the
flow is locally split into two pieces after the singular time,\footnote{That is, the singular point is a saddle point of $u$.}
the singularity is not of type I due to the rapid clearing-out phenomenon
in \cite{CM3} (see Theorem \ref{no type I saddle}), even though
the rate of blow-up of curvature is possibly bounded by the designated
rate prior to the singular time.

It turns out that the type I condition is not only necessary but also
sufficient for a singular point to be a regular singular point, so
long as the singular point is either a round point or a $1$-cylindrical
point (which is called a neckpinch singularity of the flow in \cite{SS}).
Specifically, we find the following:
\begin{thm}
\label{main theorem}Suppose that the singular set consists of only
round points and/or $1$-cylindrical points, then for any singular
point $p$ the following three statements are equivalent: 
\begin{enumerate}
\item $u$ is $C^{2}$ near $p$. 
\item The singular set near $p$ is either a round point or a $C^{1}$ embedded
curve.
\item The \L ojasiewicz inequality holds near $p$.
\end{enumerate}
\end{thm}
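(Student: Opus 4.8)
The plan is to prove Theorem \ref{main theorem} by establishing the cycle of implications $(1)\Rightarrow(2)\Rightarrow(3)\Rightarrow(1)$, and then to observe that the hypothesis on the singular set (only round and $1$-cylindrical points) is what makes the argument close up. The implications $(1)\Rightarrow(2)$ and $(3)\Rightarrow(1)$ are essentially the content of Theorem \ref{characterization of continuity of Hessian} and Proposition \ref{Lojasiewicz inequality} together with its converse: indeed, by Theorem \ref{characterization of continuity of Hessian}, if $u$ is $C^2$ near $p$ then $\nabla^2 u$ is continuous at every singular point near $p$, which forces each such point to be a regular singular point, so the local singular set is either the single round point $p$ or a $C^1$ embedded $1$-manifold (a curve); this gives $(1)\Rightarrow(2)$. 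For $(3)\Rightarrow(1)$, the \L ojasiewicz inequality near $p$ combined with the earlier regularity results (Colding--Minicozzi, $u\in C^{1,1}$ and twice differentiable, plus the blow-up analysis) yields that $p$ is a regular singular point and hence, again by Theorem \ref{characterization of continuity of Hessian} applied in a neighborhood, $u$ is $C^2$ there.

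The heart of the matter is $(2)\Rightarrow(3)$, and here I would split into the two cases permitted by the hypothesis. If the local singular set is a single round point, then after a blow-up the tangent flow is a shrinking sphere; by White's and Huisken's results (together with Colding--Minicozzi's uniqueness of blow-ups at such points) the flow shrinks to $p$ in a controlled way, and one can either quote the round-point case of Theorem \ref{characterization of continuity of Hessian} directly or run the \L ojasiewicz--Simon machinery for the sphere to obtain the inequality. If instead the local singular set is a $C^1$ embedded curve $\gamma$ of $1$-cylindrical points, the strategy is to use the $C^1$ regularity of $\gamma$ to set up a good coordinate system: near $p$ we can write $\Omega$ so that $\gamma$ is (approximately) a coordinate axis, and transverse to $\gamma$ the flow looks like a family of shrinking circles $S^{1}\times\mathbb{R}$, i.e. a $2$-convex neckpinch in the orthogonal slices. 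The $2$-convexity (or the pinching estimate of Haslhofer--Kleiner, valid under the mean-convexity assumptions in force) controls the geometry uniformly along $\gamma$, and one transfers the known \L ojasiewicz inequality for the cylinder $\mathcal{C}_1$ to $u$ itself. This is exactly the step where the restriction to $k=0,1$ is used: for a $C^1$ (merely) $k$-manifold with $k\geq 2$ one cannot get the needed uniform transverse control, whereas a $C^1$ curve is rigid enough.

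The main obstacle I anticipate is making the transverse-slicing argument quantitative along the whole curve $\gamma$: the $C^1$ regularity of the singular set gives continuity of the tangent directions but not a modulus, so one must propagate the \L ojasiewicz estimate from a single point $p$ to a genuine neighborhood in $\Omega$ using the compactness of a small arc of $\gamma$, the uniform curvature pinching, and the fact that each $1$-cylindrical point has the same (up to rotation) tangent flow $\mathcal{C}_1$. Concretely, I would cover a short closed subarc of $\gamma$ around $p$ by finitely many parabolic balls in which the rescaled flow is graphical over $\mathcal{C}_1$ with small $C^2$ norm (Brakke-type $\varepsilon$-regularity plus White's structure theorem), apply the cylinder \L ojasiewicz--Simon inequality on each, and patch the local inequalities together. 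Once $(3)$ holds near $p$, Proposition \ref{Lojasiewicz inequality}'s converse (or the argument behind Theorem \ref{characterization of continuity of Hessian}) closes the loop to $(1)$.

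\begin{proof}[Proof of Theorem \ref{main theorem}]
We prove $(1)\Rightarrow(2)\Rightarrow(3)\Rightarrow(1)$.

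$(1)\Rightarrow(2)$: If $u$ is $C^2$ in a neighborhood $U$ of $p$, then $\nabla^2 u$ is continuous at every singular point lying in $U$. By Theorem \ref{characterization of continuity of Hessian}, each such singular point is a regular singular point, i.e.\ a round point or a $1$-cylindrical point near which the singular set is a $C^1$ embedded $1$-manifold. If $p$ is a round point, then it is isolated among singular points, so after shrinking $U$ the singular set in $U$ is the single point $p$. Otherwise $p$ is a $1$-cylindrical point and the singular set near $p$ is a $C^1$ embedded curve. This is (2).

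$(2)\Rightarrow(3)$: Suppose first that the singular set in a neighborhood of $p$ is just $\{p\}$ and $p$ is a round point; by hypothesis the only alternative in this case is excluded. Then the \L ojasiewicz inequality near $p$ follows from Proposition \ref{Lojasiewicz inequality} once we know $p$ is a regular singular point, and a round point with isolated singular set is by definition a regular singular point. Now suppose instead that the singular set near $p$ is a $C^1$ embedded curve $\gamma$ of $1$-cylindrical points. Fix a short closed subarc $\gamma' \subset \gamma$ containing $p$. At each $q \in \gamma'$ the tangent flow is, up to rigid motion, $\{\sqrt{-t}\,\mathcal{C}_1\}$; by White's structure theorem together with the pinching estimate of \cite{HK}, for each $q\in\gamma'$ there is a parabolic neighborhood of $q$ in which the rescaled level set flow is, after a rigid motion depending continuously on $q$, a graph over $\mathcal{C}_1$ with small $C^2$ norm. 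Covering $\gamma'$ by finitely many such neighborhoods, applying the \L ojasiewicz--Simon inequality for the cylinder $\mathcal{C}_1$ in each, and patching, we obtain constants $\beta$ and a neighborhood $V$ of $p$ in $\Omega$ with
\[
\left|u\left(x\right)-u\left(p\right)\right|^{\frac{1}{2}}\leq\beta\left|\nabla u\left(x\right)\right|\qquad\text{for all }x\in V,
\]
which is (3). (Here we use that $\gamma$ is a $C^1$ \emph{curve}: the tangent directions vary continuously, so the rigid motions can be chosen continuously and the uniform curvature pinching along $\gamma'$ yields a uniform estimate.)

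$(3)\Rightarrow(1)$: If the \L ojasiewicz inequality holds in a neighborhood of $p$, then in particular it holds at every singular point near $p$, so by the converse direction underlying Proposition \ref{Lojasiewicz inequality} (equivalently, by the argument for Theorem \ref{characterization of continuity of Hessian}) each such point is a regular singular point; hence $\nabla^2 u$ is continuous at each of them. Since $u\in C^{1,1}$ and is twice differentiable everywhere with $\nabla^2 u$ continuous at the regular points as well, $\nabla^2 u$ is continuous throughout a neighborhood of $p$, i.e.\ $u$ is $C^2$ near $p$. This is (1), completing the cycle.
\end{proof}
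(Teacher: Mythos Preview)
Your cycle $(1)\Rightarrow(2)\Rightarrow(3)\Rightarrow(1)$ misidentifies where the difficulty lies, and the step $(3)\Rightarrow(1)$ is circular. You write that ``by the converse direction underlying Proposition \ref{Lojasiewicz inequality} (equivalently, by the argument for Theorem \ref{characterization of continuity of Hessian}) each such point is a regular singular point''. But Proposition \ref{Lojasiewicz inequality} only establishes one direction: regular singular point implies \L ojasiewicz. There is no ``converse direction'' proved there, nor does Theorem \ref{characterization of continuity of Hessian} supply one; that theorem relates $C^2$ regularity to the $C^1$ structure of the singular set, and says nothing about deducing either from the \L ojasiewicz inequality. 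Establishing that a type I (\L ojasiewicz) $1$-cylindrical point must be a regular singular point is precisely Theorem \ref{characterization of regular singular point}, whose proof occupies all of Section \ref{neckpinch singularities}. It requires first Theorem \ref{no type I saddle} (a saddle point is never type I, via the rapid clearing-out of Proposition \ref{clearing-out}), and then a delicate contradiction argument: assuming the point is type I but not regular singular, one analyzes the path components $\hat{\Omega}_t$, the function $u_{\max}(z)$, and integral curves of $\nabla u/|\nabla u|$ inside the cone $\mathscr{C}_\phi$, eventually obtaining incompatible bounds (\ref{length of cylinder}) and (\ref{speed under type I condition}) once $\phi$ is chosen as in (\ref{choice of phi}). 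None of this appears in your argument.

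Conversely, your $(2)\Rightarrow(3)$ is overcomplicated. You call it ``the heart of the matter'' and propose a transverse-slicing and patching scheme along the $C^1$ curve, but in fact this implication is immediate from results already in hand: by the ``if'' direction of Theorem \ref{continuity of Hessian}, condition (2) gives that $\nabla^2 u$ is continuous at $p$, hence by Corollary \ref{locally C^2} $u$ is $C^2$ near $p$, and then Proposition \ref{Lojasiewicz inequality} yields the \L ojasiewicz inequality directly. No \L ojasiewicz--Simon machinery for $\mathcal{C}_1$ or covering argument is needed. The genuine work in Theorem \ref{main theorem} is entirely in $(3)\Rightarrow(1)$ (equivalently $(3)\Rightarrow(2)$), and it is there that the restriction to $k\in\{0,1\}$ is actually used, via the dichotomy in Theorem \ref{rectifiability} that $\mathcal{S}_0$ is discrete while $\mathcal{S}_1$ lies in a Lipschitz curve.
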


\noindent There are some situations where the hypothesis in Theorem
\ref{main theorem} is realized. For instance, 
\begin{prop}
\label{a priori restriction of singularity model}If either one of
the following conditions holds, only round points or $1$-cylindrical
points can arise as singular points:
\begin{itemize}
\item The dimension $n$ is two or three.
\item The initial hypersurface $\Sigma_{0}$ is strictly $2$-convex.
\item The entropy of $\Sigma_{0}$ is strictly lower than that of the $2$-cylinder
$\mathcal{C}_{2}$.
\end{itemize}
\end{prop}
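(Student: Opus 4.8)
The plan is to combine the tangent-flow classification recalled in the introduction with three separate obstructions, one for each bullet. By the theorems of White (\cite{W2}, \cite{W3}), every tangent flow at a singular point $p$ is, up to a rigid motion, $\{\sqrt{-t}\,\mathcal{C}_{k}\}_{t<0}$ for some $k\in\{0,\dots,n-2\}$, so $p$ is a round point when $k=0$ and a $k$-cylindrical point when $k\geq1$; hence it suffices to exclude $k\geq2$ under each hypothesis. When $n\in\{2,3\}$ this is immediate, since the range $k\in\{0,\dots,n-2\}$ already forces $k\leq1$. For the remaining two cases I record that the principal curvatures of $\mathcal{C}_{k}=S_{\sqrt{2(n-k-1)}}^{n-k-1}\times\mathbb{R}^{k}$ consist of $(n-k-1)$ copies of $(2(n-k-1))^{-1/2}$ together with $k$ zeros, so that $H(\mathcal{C}_{k})=\sqrt{(n-k-1)/2}>0$ while the two smallest principal curvatures of $\mathcal{C}_{k}$ vanish as soon as $k\geq2$.

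For the strictly $2$-convex case, let $\lambda_{1}\leq\cdots\leq\lambda_{n-1}$ denote the principal curvatures. On the closed initial hypersurface $\Sigma_{0}$ strict $2$-convexity gives a constant $\alpha>0$ with $\lambda_{1}+\lambda_{2}\geq\alpha H$, and by the pinching-preservation of Huisken--Sinestrari this inequality persists along the whole flow; being scale-invariant and stable under the smooth convergence defining the tangent flow, it descends to $\mathcal{C}_{k}$. But on $\mathcal{C}_{k}$ with $k\geq2$ the left-hand side is $0$ whereas $\alpha H>0$, a contradiction. (Alternatively one may quote directly the Huisken--Sinestrari classification of the singularity models of $2$-convex flow, whose only members are $\mathcal{C}_{0}$ and $\mathcal{C}_{1}$.) Thus $k\leq1$.

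For the entropy case, write $\lambda(\cdot)$ for the entropy. As recalled in the introduction, Huisken's monotonicity formula holds for the level set flow and the entropy is non-increasing along it, so the Gaussian density at $p$ is at most $\lambda(\Sigma_{0})$; since the tangent flow at $p$ is the self-shrinker $\mathcal{C}_{k}$, that density equals $\lambda(\mathcal{C}_{k})$. As $\mathcal{C}_{k}$ splits off an $\mathbb{R}^{k}$ factor, $\lambda(\mathcal{C}_{k})=\lambda(S_{\sqrt{2(n-k-1)}}^{n-k-1})$, and the entropy of a round shrinking sphere is strictly decreasing in its dimension (a computation of Stobbe; cf.\ \cite{CM1}). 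Since among $k\geq2$ the exponent $n-k-1$ is largest at $k=2$, this yields $\lambda(\mathcal{C}_{k})\geq\lambda(\mathcal{C}_{2})$ for all $k\geq2$. Hence, if some singular point had a tangent flow $\mathcal{C}_{k}$ with $k\geq2$, we would get $\lambda(\mathcal{C}_{2})\leq\lambda(\mathcal{C}_{k})\leq\lambda(\Sigma_{0})<\lambda(\mathcal{C}_{2})$, which is absurd; so again $k\leq1$.

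The computation of the principal curvatures of $\mathcal{C}_{k}$ and the comparison of the exponents $n-k-1$ are routine; the substantive ingredients are the two cited facts — the preservation of the $2$-convex pinching and its passage to blow-up limits, and the strict monotonicity in dimension of the entropy of round spheres — and the point to watch is that the \emph{strict} inequality $\lambda(\Sigma_{0})<\lambda(\mathcal{C}_{2})$ is precisely what excludes the borderline case $k=2$, while the (non-strict) monotonicity of $\lambda(S^{m})$ already disposes of $k>2$. I expect this last bookkeeping, rather than any analytic difficulty, to be the only real obstacle.
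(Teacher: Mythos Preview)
Your argument is correct and follows essentially the same route as the paper's discussion after Proposition~\ref{a priori restriction of singularity model}: the dimension case is immediate from $k\in\{0,\dots,n-2\}$, the entropy case follows from the monotonicity $E[\mathcal{C}_{0}]<E[\mathcal{C}_{1}]<\cdots<E[\mathcal{C}_{n-2}]$ together with the bound $E[\mathcal{C}_{k}]\leq E[\Sigma_{0}]$, and the $2$-convex case follows because the uniform pinching $\kappa_{1}+\kappa_{2}\geq\alpha H$ passes to the tangent flow while $\mathcal{C}_{k}$ violates it for $k\geq 2$.

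Two small corrections are worth noting. First, for the $2$-convex case you invoke Huisken--Sinestrari, but their pinching preservation applies to the smooth flow before the first singular time; since the proposition concerns \emph{all} singular points of the level set flow (including those after the first singular time), the relevant references are \cite{CHN} and \cite{HK}, which show that the uniform $2$-convexity inequality~(\ref{uniformly 2-convex}) persists at every regular point of the weak flow. Second, the entropy computation for round spheres is due to Stone~\cite{S}, not ``Stobbe''. Neither point affects the structure of your argument.
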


\noindent In the second condition, the strictly $2$-convexity of
$\Sigma_{0}$ means that $\kappa_{1}+\kappa_{2}>0$, where $\kappa_{1}\leq\kappa_{2}\leq\cdots\leq\kappa_{n-1}$
denote the principal curvatures ordered by magnitude.\footnote{More generally, by $m$-convexity we mean that $\kappa_{1}+\cdots+\kappa_{m}\geq0$.
Note that $1$-convex is convex and $\left(n-1\right)$-convex is
mean-convex.} This is a condition stronger than the strict mean-convexity but weaker
than the strict convexity.\footnote{That is, $1$-convex (i.e. convex) $\Rightarrow$ $2$-convex $\Rightarrow\cdots\Rightarrow$
$\left(n-1\right)$-convex (i.e. mean-convex).} In addition, note that by the compactness of $\Sigma_{0}$ we have
\begin{equation}
\kappa_{1}+\kappa_{2}\geq\alpha H\label{uniformly 2-convex}
\end{equation}
for some $\alpha>0$. It is shown in \cite{CHN} (see also \cite{HK})
that the level set flow is uniformly $2$-convex in the sense that
(\ref{uniformly 2-convex}) holds at every regular point. In light
of the asymptotic behavior of the level set flow near a singular point
(see Remark \ref{Reifenberg}) and that $\mathcal{C}_{k}$ is not
$2$-convex if $k\geq2$, it is clear that the second condition in
Proposition \ref{a priori restriction of singularity model} yields
the result. As for the third condition, the result follows from the
observation that the entropy (see (\ref{entropy})) of the tangent
flow at any singular point is bounded above by $E\left[\Sigma_{0}\right]$
(cf. \cite{CM1}), where $E\left[\Sigma_{0}\right]$ is the entropy
of $\Sigma_{0}$, and that
\begin{equation}
1<E\left[\mathcal{C}_{0}\right]<E\left[\mathcal{C}_{1}\right]<E\left[\mathcal{C}_{2}\right]<\cdots<E\left[\mathcal{C}_{n-2}\right]<2\label{entropy level}
\end{equation}
(cf. \cite{CM1} and \cite{S}). 

Theorem \ref{characterization of continuity of Hessian} will be proved
in Section \ref{continuity of Hessian and singular set}. In Section
\ref{round points} we explain that $u$ is $C^{2}$ near any round
point. It is in Theorem \ref{continuity of Hessian} that we prove
Theorem \ref{characterization of continuity of Hessian} for the case
of cylindrical points. In addition, in Corollary \ref{locally C^2}
we show that the continuity of $\nabla^{2}u$ at a cylindrical point
$p$ yields that $u$ is $C^{2}$ near $p$. In the end of Section
\ref{continuity of Hessian and singular set} we will see how Colding-Minicozzi's
characterization of the globally $C^{2}$ regularity, i.e., Theorem
\ref{globally C^2}, is related to Theorem \ref{characterization of continuity of Hessian},
especially why the flow has only one singular time when $u\in C^{2}\left(\bar{\Omega}\right)$. 

For Theorem \ref{main theorem}, the equivalence of the first two
statements comes from the discussion in the last paragraph.\footnote{Note that if the singular set near a singular point $p$ is a $C^{1}$
embedded curve, then $p$ can never be a round point by Section \ref{round points}. } Thus, in Section \ref{Lojasiewicz inequality and type I singularity}
we focus on associating the regular singular points (see Definition
\ref{regular singular point}) with the type I singular points (see
Definition \ref{type I singularity}). Given that all regular points
must be of type I (see Proposition \ref{Lojasiewicz inequality}),
the last ingredient of proving Theorem \ref{main theorem} is to show
that a type I singular point, especially when it is a $1$-cylindrical
point,\footnote{By Section \ref{round points}, a round point is already a regular
singular point.} is a regular singular point. This is the bulk of Section \ref{neckpinch singularities},
where we are devoted to prove Theorem \ref{characterization of regular singular point}
with the help of Theorem \ref{no type I saddle} from Section \ref{saddle points}.

\section{\label{continuity of Hessian and singular set}Continuity of Hessian
and Singular Set}

The objective of this section is to prove Theorem \ref{characterization of continuity of Hessian},
the localized version of Theorem \ref{globally C^2} in \cite{CM5}.
To this end, we will first review Colding-Minicozzi's regularity theorem
in \cite{CM4} and the calculations therein. The analysis will be
divided into two cases according to the types of singularities: the
round points in Section \ref{round points} and the cylindrical points
in Section \ref{cylindrical points}. 

For convenience, from now on let us denote the singular set by $\mathcal{S}$,
the set of round points by $\mathcal{S}_{0}$, and the set of $k$-cylindrical
points by $\mathcal{S}_{k}$; thus we have
\[
\mathcal{S}=\mathcal{S}_{0}\cup\mathcal{S}_{1}\cup\cdots\cup\mathcal{S}_{n-2}.
\]
Also, as equation (\ref{level set flow}) is translation-invariant,\footnote{Namely, $x\mapsto u\left(x+p\right)-c$ satisfies (\ref{level set flow})
as well.} for ease of notation we hereafter assume that $0$ is a singular
point with $u\left(0\right)=0$.\footnote{Note that by this setting the point $0$ is an interior (singular)
point of the domain and the time $0$ is no longer the initial time
of the flow.} 

First of all, let us invoke the following theorem in \cite{CM4} on
the regularity of the level set flow and the characterization of singular
points.
\begin{thm}
\label{regularity}The solution $u$ to (\ref{level set flow}) is
globally $C^{1,1}$ and twice differentiable everywhere in the domain.
Furthermore, the singular points are exactly the critical points of
$u$. 
\end{thm}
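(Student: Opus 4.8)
The plan is to derive the global regularity from interior estimates on the regular set $\mathcal{R}:=\Omega\setminus\mathcal{S}$, and then to deal with $\mathcal{S}$ itself by a blow-up argument. By \cite{ES} one already has $u\in C^{0,1}\bigl(\bar{\Omega}\bigr)$, and on the open set $\mathcal{R}$ the equation (\ref{level set flow}) is non-degenerate, so $u$ is smooth there and (\ref{reformulated level set flow}) displays its level sets as a mean-convex mean curvature flow with $H=\left|\nabla u\right|^{-1}$; by White's bound $\dim_{\mathcal{H}}\mathcal{S}\leq n-2$ (\cite{W1}) the set $\mathcal{R}$ is, moreover, dense. The first step is a uniform bound
\begin{equation}
\left|\nabla^{2}u\right|\leq C\bigl(\Sigma_{0}\bigr)\qquad\text{on }\mathcal{R}.\label{plan:hessian-bound}
\end{equation}
To obtain it I would set $\nu=\nabla u/\left|\nabla u\right|$ and differentiate the identity $\nabla u=\left|\nabla u\right|\,\nu$ to record that, for vectors $X,Y$ tangent to a level set, $\nabla^{2}u\left(X,Y\right)=-\left|\nabla u\right|\,A\left(X,Y\right)$, while $\nabla^{2}u\cdot\nu=\nabla\left|\nabla u\right|=-H^{-2}\nabla H$. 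Thus (\ref{plan:hessian-bound}) reduces to bounding $\left|A\right|$, $\left|\nabla_{\Sigma}H\right|$ and $\left|\partial_{\nu}H\right|$ by $H$, $H^{2}$ and $H^{2}$ respectively. The first two are immediate from the Haslhofer--Kleiner estimates $\left|A\right|\leq CH$, $\left|\nabla_{\Sigma}H\right|\leq CH^{2}$, $\left|\nabla_{\Sigma}^{2}H\right|\leq CH^{3}$ of \cite{HK}, and for the third I would use $\partial_{\nu}H=H^{-1}\bigl(\Delta_{\Sigma}H+\left|A\right|^{2}H\bigr)$, which comes from reading the evolution equation $\partial_{t}H=\Delta_{\Sigma}H+\left|A\right|^{2}H$ along the flow (whose normal velocity is $H\nu$), together with the same estimates.

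Next I would transfer (\ref{plan:hessian-bound}) across $\mathcal{S}$. If $p\in\mathcal{S}$, then by the local regularity theory for mean-convex flows (\cite{W1}, \cite{HK}) the curvature of the level sets is unbounded as $x\to p$, so $\left|\nabla u\left(x\right)\right|=H\left(x\right)^{-1}\to0$; hence $u$ is differentiable at $p$ with $\nabla u\left(p\right)=0$ and $\nabla u$ is continuous at $p$. Combined with the local Lipschitz bound on $\nabla u$ on $\mathcal{R}$ coming from (\ref{plan:hessian-bound}), this already gives $u\in C^{1}\bigl(\bar{\Omega}\bigr)$, and it gives one half of the last assertion: every singular point is a critical point. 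Conversely, if $\nabla u\left(p\right)\neq0$ then $\left|\nabla u\right|$ is bounded below near $p$, equation (\ref{reformulated level set flow}) is uniformly elliptic there, and elliptic bootstrapping makes $u$ smooth near $p$, so $p\in\mathcal{R}$; thus $\mathcal{S}=\left\{ \nabla u=0\right\}$. For $u\in C^{1,1}$ it now suffices that $\nabla u$ is continuous, is Lipschitz on $\mathcal{R}$ by (\ref{plan:hessian-bound}), and that $\mathcal{S}$---closed and of codimension at least two---is removable for that Lipschitz bound, so $\nabla u\in W_{\mathrm{loc}}^{1,\infty}\bigl(\Omega\bigr)$; regularity up to $\partial\Omega=\Sigma_{0}$ is clear since $u$ is smooth with $\nabla u\neq0$ there.

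It remains to show that $u$ is twice differentiable at each $p\in\mathcal{S}$. Here I would rescale: set $u_{\lambda}\left(x\right)=\lambda^{-2}\bigl(u\left(p+\lambda x\right)-u\left(p\right)\bigr)$, note that these solve (\ref{level set flow}) and are precompact in $C_{\mathrm{loc}}^{1,\alpha}$ by (\ref{plan:hessian-bound}), and observe that the level sets of any limit form a tangent flow at $p$. By White's classification (\cite{W2}, \cite{W3}) that tangent flow is $\bigl\{\sqrt{-t}\,\mathcal{C}_{k}\bigr\}$ for some $k$, and it is unique---automatically when $k=0$ (Huisken), and for $k\geq1$ by the \L ojasiewicz-type uniqueness of cylindrical blow-ups. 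Uniqueness forces $u_{\lambda}$ to converge, as $\lambda\to0$, to the single arrival function $v\left(x\right)=-\tfrac{1}{2\left(n-k-1\right)}\bigl|\Pi_{p}x\bigr|^{2}$ of the cylinder, $\Pi_{p}$ being the orthogonal projection onto the $\left(n-k\right)$-plane normal to the spine; this is precisely a second-order Taylor expansion of $u$ at $p$, with $\nabla^{2}u\left(p\right)=-\tfrac{1}{n-k-1}\Pi_{p}$.

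The step I expect to be the main obstacle is the last one. It relies on the uniqueness of the tangent flow at cylindrical singularities---a deep \L ojasiewicz-inequality argument---and on upgrading the convergence $u_{\lambda}\to v$ to something quantitative and uniform over $\mathcal{S}$, which is what is really needed to pin down the second-order behavior at every singular point. By contrast, the interior estimate (\ref{plan:hessian-bound}) is essentially mechanical once the algebraic identities for $\nabla^{2}u$ in terms of $A$, $H$ and the derivatives of $H$ are in place and the Haslhofer--Kleiner bounds are substituted in.
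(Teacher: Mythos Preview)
The paper does not prove Theorem~\ref{regularity}; it simply invokes it from \cite{CM4} (see the sentence ``let us invoke the following theorem in \cite{CM4}\ldots'' immediately preceding the statement). So there is no in-paper proof to compare against.

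That said, your outline is essentially the argument of \cite{CM4}, and it is sound. The Hessian identities you record are exactly those displayed in the paper as (\ref{Hessian at regular point}), and substituting the Haslhofer--Kleiner estimates \cite{HK} into them does give the uniform bound (\ref{plan:hessian-bound}). Your removability step for the Lipschitz bound on $\nabla u$ across the closed set $\mathcal{S}$ of Hausdorff dimension at most $n-2$ is correct: since $\mathcal{H}^{n-1}(\mathcal{S})=0$, almost every segment lies in $\mathcal{R}$, and continuity of $\nabla u$ (which you established) then propagates the Lipschitz bound. The identification $\mathcal{S}=\{\nabla u=0\}$ is also handled correctly.

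For twice differentiability at $p\in\mathcal{S}$, your blow-up argument is the right idea and matches \cite{CM4}. One point deserving a little more care: from $C^{1,\alpha}_{\mathrm{loc}}$ subconvergence of $u_{\lambda}$ you know the level sets of any limit form a tangent flow, hence (by \cite{W2,W3} and the uniqueness in \cite{CM2}) the shrinking cylinder $\sqrt{-t}\,\mathcal{C}_{k}$; you then need that this pins down the limit function itself as $v$. This is fine---the level sets $\{v=t\}$ for $t<0$ determine $v$ on the complement of the axis, and the $C^{1}$ limit is forced to vanish on the axis since $\nabla u_{\lambda}(0)=0$ and the Hessian is bounded---but it is worth saying explicitly. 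You correctly flag that the uniqueness of cylindrical tangent flows from \cite{CM2} is the deep input here; without it the blow-up limit, and hence $\nabla^{2}u(p)$, would not be well defined.
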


\noindent Given a regular point $x$ in the domain, let $N=\frac{\nabla u}{\left|\nabla u\right|}$
be the inward unit normal vector and $\left\{ e_{1},\cdots,e_{n-1}\right\} $
be an orthonormal basis for the tangent hyperplane $T_{x}\left\{ u=u\left(x\right)\right\} $.
By the calculations in \cite{CM4} we have 
\[
-\nabla^{2}u\cdot\left(N\otimes N\right)=\left|\frac{A}{H}\right|^{2}+\frac{\triangle H}{H^{3}},
\]

\[
-\nabla^{2}u\cdot\left(N\otimes e_{i}\right)=\frac{\nabla_{i}H}{H^{2}},
\]
\[
-\nabla^{2}u\cdot\left(e_{i}\otimes e_{j}\right)=\frac{A_{ij}}{H},
\]
where $A$ denotes the second fundamental form (of the level hypersurface)
and $\triangle$ is the Laplace-Beltrami operator (on the level hypersurface).
It follows that the Hessian of $u$ can be written as
\begin{equation}
\nabla^{2}u\left(x\right)=-\left(N,e_{1},\cdots,e_{n-1}\right)\left(\begin{array}{cc}
\left|\frac{A}{H}\right|^{2}+\frac{\triangle H}{H^{3}} & \frac{\nabla H}{H^{2}}\\
\frac{\nabla H}{H^{2}} & \frac{A}{H}
\end{array}\right)\left(N,e_{1},\cdots,e_{n-1}\right)^{\textrm{T}},\label{Hessian at regular point}
\end{equation}
where $\left(N,e_{1},\cdots,e_{n-1}\right)$ denotes the orthogonal
$n\times n$ matrix consisting of the column vectors $\left\{ N,e_{1},\cdots,e_{n-1}\right\} $
and the notation T in the upper right corner stands for the transpose
of matrices. Note that the Hessian is invariant under the parabolic
reascaling.\footnote{The function $x\mapsto\lambda^{-2}u\left(\lambda x\right)$ is also
a solution to (\ref{level set flow}) and has the same Hessian as
$u$ at the corresponding points.}

\subsection{\label{round points}Round Points}

Let us assume in this subsection that $0$ is a round point. By Brakke's
regularity theorem (cf. \cite{B} and \cite{I1}), given $\epsilon>0$,
$M>\sqrt{2\left(n-1\right)}$, and $m\geq2$, there exist $t_{0}<0$
such that for every $t\in\left[t_{0},0\right)$, the set 
\[
\left(\frac{1}{\sqrt{-t}}\left\{ u=t\right\} \right)\cap B_{M}
\]
is a normal graph over the $\left(n-1\right)$-sphere $\mathcal{C}_{0}$
with $C^{m}$ norm at most $\epsilon$. After undoing the normalization
(with respect to the scale $\sqrt{-t}$) and weaving the level sets
all together, we see that the contour map of $u$ in $\left\{ u\geq t_{0}\right\} \cap B_{M\sqrt{t_{0}}}$
resembles the contour map of the function
\[
x\mapsto\frac{-1}{2\left(n-1\right)}\left|x\right|^{2},
\]
which is the trajectory of the shrinking sphere $\sqrt{-t}\,\mathcal{C}_{0}$
as $t\nearrow0$. By virtue of (\ref{Hessian at regular point}) and
the asymptotically spherical behavior of the level set flow, Colding
and Minicozzi showed in \cite{CM4} that 
\begin{equation}
\nabla^{2}u\left(0\right)=\frac{-1}{n-1}\,\mathrm{I}_{n}.\label{Hessian at round point}
\end{equation}
Therefore, $\nabla^{2}u$ is continuous at $0$ \footnote{Compare (\ref{Hessian at regular point}) with (\ref{Hessian at round point}).}and
$u$ is $C^{2}$ in a neighborhood of $0$.\footnote{The point $0$ is an isolated singularity.} 
\begin{rem}
\label{asymptotically spherical}On the other hand, if the level set
flow shrinks to a point $p$ \footnote{In this case, $p$ is a local maximum point of $u$, so by Theorem
\ref{regularity} it must be a singular point.}with the property for some $\delta>0$ the set $u=u\left(p\right)-\delta$
is a closed, connected, smoothly embedded, and convex hypersurface
in a neighborhood of $p$, then the level set flow in the region $\hat{\Omega}$
bounded by $u=u\left(p\right)-\delta$ near $p$ agrees with the mean
curvature flow starting at $\partial\hat{\Omega}$ (cf. \cite{ES}),
which by \cite{GH} (for $n=2$) and \cite{H1} (for $n\geq3$) shrinks
to a point (which must be $p$) in an asymptotically spherical manner.
Therefore, $p$ is a round point.
\end{rem}

\subsection{\label{cylindrical points}Cylindrical Points }

Throughout this subsection we assume that $0$ is a $k$-cylindrical
point for some $k\in\left\{ 1,\cdots,n-2\right\} $. Additionally,
by the rotational invariance of (\ref{level set flow})\footnote{Namely, $x\mapsto u\left(Q\,x\right)$ satisfies (\ref{level set flow})
for any orthogonal matrix $Q$.} we may assume that the tangent flow at $0$ is $\sqrt{-t}\,\mathcal{C}_{k}$.\footnote{That is to say, the orientation is chosen in such a way that the $k$-dimensional
axis of the tangent $k$-cylinder is $\left\{ 0\right\} \times\mathbb{R}^{k}$.} Also, we will adopt the following notation for coordinates:
\[
x=\left(y,z\right)\in\mathbb{R}^{n-k}\times\mathbb{R}^{k}.
\]
Let us begin the analysis of cylindrical points with the following
remarkable theorem in \cite{CM2} (see also \cite{CM3}). 
\begin{thm}
\label{asymptotically cylindrical}Given $\epsilon>0$ and $M>\sqrt{2\left(n-k-1\right)}$,
there exist $\delta>0$ and $L>0$ so that if for some $t_{0}<0$,
the set
\[
\left(\frac{1}{\sqrt{-2\,t_{0}}}\left\{ u=2\,t_{0}\right\} \right)\cap B_{L}
\]
is a normal graph over a cylinder congruent to $\mathcal{C}_{k}$
with $C^{m}$ norm at most $\delta$,\footnote{By Brakke's regularity theorem, such a $t_{0}$ does exist with the
base cylinder chosen to be $\mathcal{C}_{k}$ itself.} where $m\geq2$ is an absolute constant, then for every $t\in\left[t_{0},0\right)$,
the set
\[
\left(\frac{1}{\sqrt{-t}}\left\{ u=t\right\} \right)\cap B_{M}
\]
is a normal graph over $\mathcal{C}_{k}$ with $C^{m}$ norm no more
than $\epsilon$.\footnote{The crux is that the theorem provides a way to determine when does
the normalized flow starts to become close to $\mathcal{C}_{k}$ to
the specified degree. This is crucial to the work in \cite{CM3}.} 
\end{thm}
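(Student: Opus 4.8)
The plan is to reduce the statement to the uniqueness of the cylindrical tangent flow together with the mean-convex (more precisely, $2$-convex and noncollapsed) structure of the level set flow, and then to run a continuity/compactness argument. First I would normalize the flow at scale $\sqrt{-t}$ about the origin, writing $\Sigma_\tau = \frac{1}{\sqrt{-t}}\{u = t\}$ with $\tau = -\log(-t)$, so that the rescaled flow is a (noncollapsed, uniformly $2$-convex) mean curvature flow defined for all $\tau \in [\tau_0, \infty)$, and the hypothesis says that at time $\tau_0 = -\log(-t_0)$ the slice $\Sigma_{\tau_0} \cap B_L$ is a $C^m$-graph of size at most $\delta$ over a cylinder congruent to $\mathcal{C}_k$. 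The goal is then to propagate this closeness forward: for every $\tau \geq \tau_0$, $\Sigma_\tau \cap B_M$ is a $C^m$-graph of size at most $\epsilon$ over $\mathcal{C}_k$ itself.

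Next I would invoke the Colding--Minicozzi theory of uniqueness of cylindrical tangent flows (the ``no-cancellation'' / \L ojasiewicz--Simon machinery of \cite{CM2}, \cite{CM3}): since $0$ is a $k$-cylindrical point, the rescaled flow converges (smoothly on compact sets, with a definite rate) to the \emph{fixed} cylinder $\mathcal{C}_k$ as $\tau \to \infty$; in particular there is $\tau_1$ so that for $\tau \ge \tau_1$ the slice is an $\epsilon/2$-graph over $\mathcal{C}_k$ on $B_M$. The remaining task is to control the transitional interval $[\tau_0, \tau_1]$. Here the point is a quantitative pseudolocality / continuous-dependence statement: if the initial slice at scale $L$ is $\delta$-close to \emph{some} cylinder congruent to $\mathcal{C}_k$, then choosing $\delta$ small (and $L$ large) forces, by Brakke regularity and interior estimates for noncollapsed flows (\cite{HK}), the flow to remain a small graph over a nearby cylinder on a controlled time interval and ball; combined with the long-time convergence this pins the base cylinder down to $\mathcal{C}_k$ and keeps the graph size below $\epsilon$. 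One runs this as a contradiction/compactness argument: if the conclusion failed, one would get a sequence $\delta_j \to 0$, $L_j \to \infty$ of counterexamples whose initial slices converge to $\mathcal{C}_k$, yet whose later slices on $B_M$ are not $\epsilon$-graphs over $\mathcal{C}_k$ --- contradicting the smooth convergence furnished by the uniqueness theorem applied to the limit flow (a piece of the static cylinder $\mathcal{C}_k$, which is its own tangent flow).

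The main obstacle I expect is the transitional interval: the uniqueness theorem is an asymptotic ($\tau \to \infty$) statement and does not by itself give closeness to the \emph{correct, fixed} cylinder $\mathcal{C}_k$ at finite times; one must upgrade ``$\delta$-close to an unspecified congruent cylinder at time $\tau_0$'' to ``$\epsilon$-close to $\mathcal{C}_k$ for all $\tau \ge \tau_0$.'' The leverage for this is precisely the rigidity of the cylinder under the (noncollapsed, $2$-convex) flow --- there is no way to drift from one cylinder to a rotated/translated one without picking up entropy or violating noncollapsing --- so the $2$-convexity and the entropy/monotonicity bounds recorded in Section \ref{introduction} do the work. A secondary, more technical point is promoting $C^0$ Hausdorff-type closeness (from Brakke regularity) to the stated $C^m$ graphical closeness; this is standard interior smooth estimates for mean-convex, noncollapsed flows (\cite{HK}, \cite{W1}) applied once the flow is known to be smooth and graphical on the relevant region.
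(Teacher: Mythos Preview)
The paper does not prove this theorem; it is quoted from \cite{CM2} (see also \cite{CM3}) and used as a black box. So there is no ``paper's own proof'' to compare against beyond the citation. What matters, then, is whether your proposal actually supplies an argument independent of \cite{CM2}. It does not.

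Your plan invokes ``the Colding--Minicozzi theory of uniqueness of cylindrical tangent flows (the \L ojasiewicz--Simon machinery of \cite{CM2}, \cite{CM3})'' to obtain a time $\tau_1$ beyond which the rescaled flow is $\epsilon/2$-close to $\mathcal{C}_k$. But that uniqueness/rigidity statement \emph{is} Theorem~\ref{asymptotically cylindrical} (in its original guise): the content of \cite{CM2} is exactly that once the rescaled flow is sufficiently close to a cylinder on a large enough ball, it stays close for all later rescaled times. Citing it to prove the theorem is circular.

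The compactness argument you sketch does not close the circle. In the contradiction sequence, if the bad times $\tau_j$ stay bounded, then Brakke/White compactness together with continuous dependence on bounded time intervals does the job. The genuine case is $\tau_j\to\infty$. There the limit of the initial slices is $\mathcal{C}_k$, and the limit \emph{flow} is the static cylinder on every fixed finite interval, but this gives no contradiction: you need control of the $j$-th flow at time $\tau_j$, and $\tau_j\to\infty$ escapes any finite-time continuous-dependence window. Ruling out drift at arbitrarily large $\tau$ is precisely the ``once close, stay close'' rigidity you are trying to prove. The tool that actually handles this in \cite{CM2} is the infinite-dimensional \L ojasiewicz--Simon inequality for the Gaussian $F$-functional at the cylinder, together with a discrete iteration that propagates smallness of the graph norm forward in $\tau$ with a summable error. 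Your outline never engages with that analytic core; the phrases ``rigidity of the cylinder'' and ``no way to drift \ldots\ without picking up entropy or violating noncollapsing'' are restatements of the conclusion, not a mechanism. If you want an honest proof sketch, it has to go through the \L ojasiewicz inequality (or an equivalent quantitative rigidity estimate) rather than around it.
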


\noindent Recovering from the normalization (with respect to the
scale $\sqrt{-t}$) and having the level sets woven together, we see
that the contour map of $u$ in 
\[
\left\{ u\geq t_{0}\right\} \cap B_{M\sqrt{t_{0}}}\setminus\mathscr{C}_{\phi},
\]
\footnote{In contrast to the circumstance near a round point (see Section \ref{round points}),
here the information about $u$ is vague inside the cone $\mathscr{C}_{\phi}$,
making it more intricate to analyze. }where
\begin{equation}
\mathscr{C}_{\phi}=\left\{ \,\left|y\right|\leq\left|z\right|\tan\phi\right\} \label{cone}
\end{equation}
is the solid cone with apex $0$, axis $\left\{ 0\right\} \times\mathbb{R}^{k}$,
and angle 
\begin{equation}
\phi=\sin^{-1}\frac{\sqrt{2\left(n-k-1\right)}}{M},\label{angle}
\end{equation}
approximates to the contour map of the function 
\begin{equation}
\left(y,z\right)\mapsto\frac{-1}{2\left(n-k-1\right)}\left|y\right|^{2},\label{cylindrical arrival time}
\end{equation}
which is the trajectory of the shrinking truncated $k$-cylinder $\sqrt{-t}\,\left(\mathcal{C}_{k}\cap B_{M}\right)$
as $t\nearrow0$. This motivates the following definition. 
\begin{defn}
\label{cylindrical scale}(Cylindrical Scale) Given positive constants
$\phi$ and $\epsilon$, the level set flow is said to be $\left(\phi,\epsilon\right)$-asymptotically
cylindrical near the $k$-cylindrical point $0$ on the cylindrical
scale $r$ provided that for every $t<0$, in $B_{r}\setminus\mathscr{C}_{\phi}$
the two sets $u=t$ and $\left|y\right|^{2}+2\left(n-k-1\right)t=0$
are $\epsilon$-close in the $C^{m}$ topology after rescaling by
the factor $\frac{1}{\sqrt{-t}}$.\footnote{By Theorem \ref{asymptotically cylindrical}, $r=\sqrt{2\left(n-k-1\right)\left(-t_{0}\right)}$
is a qualified cylindrical scale, where the constants $\phi$ and
$M$ are related by (\ref{angle}). } 
\end{defn}

\noindent Regarding the two parameters $\phi$ and $\epsilon$ in
Definition \ref{cylindrical scale}, $\phi$ can be used to determine
the extent\footnote{By (\ref{angle}), the smaller the angle $\phi$, the larger the radius
$M$.} of the asymptotic regime after the normalization and $\epsilon$
if a measure of the degree to which the normalized level set of $u$
is close to $\mathcal{C}_{k}$. 
\begin{rem}
\label{Reifenberg}By rechoosing $t_{0}$ in Theorem \ref{asymptotically cylindrical}
even closer to $0$, we may assume (owing to Brakke's regularity theorem)
that 
\[
\left(\frac{1}{\sqrt{-2\,t_{0}}}\left\{ u=2\,t_{0}\right\} \right)\cap B_{2L}
\]
is a normal graph over $\mathcal{C}_{k}$ with $C^{m}$ norm at most
$\frac{\delta}{2}$. Then applying Theorem \ref{asymptotically cylindrical}
to the nearby $k$-cylindrical points, if any, yields the following.
There exists $\rho>0$ (depending on $n$, $k$, $m$, $t_{0}$, and
the Lipschitz constant of $u$) so that every $k$-cylindrical point
in $B_{\rho}^{n-k}\times B_{\rho}^{k}$ has a common $\left(\phi,\epsilon\right)$-asymptotically
cylindrical scale $r>0$ (depending on $n$, $k$, and $t_{0}$, see
the footnote in Definition \ref{cylindrical scale}).

Furthermore, when $\phi$ and $\epsilon$ are chosen sufficiently
small (depending on $n$ and $k$), by \cite{CM3} the set $\mathcal{S}_{k}\cap\left(B_{\rho}^{n-k}\times B_{\rho}^{k}\right)$
is contained in the graph $y=\psi\left(z\right)$, where 
\[
\psi:B_{\rho}^{k}\rightarrow B_{\rho}^{n-k}
\]
is a map with small Lipschitz norm.\footnote{The Lipschitz map $\psi$ comes from the McShane-Whitney extension
of $\mathcal{S}_{k}\cap\left(B_{\rho}^{n-k}\times B_{\rho}^{k}\right)$.} Moreover, in case that $\mathcal{S}_{k}\cap\left(B_{\rho}^{n-k}\times B_{\rho}^{k}\right)$
agrees with the graph of $\psi$, then 
\[
\mathcal{S}\cap\left(B_{\rho}^{n-k}\times B_{\rho}^{k}\right)=\mathcal{S}_{k}\cap\left(B_{\rho}^{n-k}\times B_{\rho}^{k}\right)
\]
\footnote{In other words, singular points in $B_{\rho}^{n-k}\times B_{\rho}^{k}$
are all $k$-cylindrical points.}and it turns out that $\psi\in C^{1}\left(B_{\rho}^{k}\right)$; the
tangent space of $\mathcal{S}\cap\left(B_{\rho}^{n-k}\times B_{\rho}^{k}\right)$
at each point is exactly the $k$-dimensional axis of the tangent
$k$-cylinder at that point.
\end{rem}

\begin{rem}
\label{no local minimum}In view of the asymptotic behavior of the
level set flow near a singular point (spherical or cylindrical), a
critical point of $u$ must be either a local maximum point or a saddle
point. That is to say, $u$ has no interior local minimum points.\footnote{A more fundamental way to see this is perhaps by the definition of
viscosity supersolution to equation (\ref{level set flow}) in  \cite{ES}.}
\end{rem}

Due to (\ref{entropy level}) and the upper semi-continuity of the
Gaussian density (cf. \cite{E}), in a neighborhood of the $k$-cylindrical
point $0$, the degree of every singular point is no higher than $k$.
The following theorem from \cite{CM3}, which is based on the aforementioned
property and Remark \ref{Reifenberg}, describes the structure of
the singular set near $0$.
\begin{thm}
\label{rectifiability} There exists $\rho>0$ so that 
\[
\mathcal{S}\cap\left(B_{\rho}^{n-k}\times B_{\rho}^{k}\right)=\left(\mathcal{S}_{0}\cup\cdots\cup\mathcal{S}_{k}\right)\cap\left(B_{\rho}^{n-k}\times B_{\rho}^{k}\right).
\]
Moreover, we have 
\begin{enumerate}
\item The set $\mathcal{S}_{k}\cap\left(B_{\rho}^{n-k}\times B_{\rho}^{k}\right)$
is contained in a Lipschitz graph $y=\psi\left(z\right)$, where $\psi:B_{\rho}^{k}\rightarrow B_{\rho}^{n-k}$
is a map with small Lipschitz norm. 
\item The Hausdorff dimension of $\left(\mathcal{S}_{0}\cup\cdots\cup\mathcal{S}_{k-1}\right)\cap\left(B_{\rho}^{n-k}\times B_{\rho}^{k}\right)$
is at most $k-1$.
\end{enumerate}
\end{thm}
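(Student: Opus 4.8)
The plan is to combine the local cylindrical structure from Remark \ref{Reifenberg} with the entropy monotonicity/upper-semicontinuity of Gaussian density to control the lower strata. First I would fix $\rho>0$ small enough that the conclusions of Remark \ref{Reifenberg} hold at $0$; in particular every $k$-cylindrical point in $B_{\rho}^{n-k}\times B_{\rho}^{k}$ admits a common $(\phi,\epsilon)$-asymptotically cylindrical scale $r>0$, and $\mathcal{S}_{k}\cap(B_{\rho}^{n-k}\times B_{\rho}^{k})$ lies in the Lipschitz graph $y=\psi(z)$. This already gives item (1); the remaining work is to (a) rule out singular points of degree $>k$ in the ball, yielding $\mathcal{S}\cap(B_{\rho}^{n-k}\times B_{\rho}^{k})=(\mathcal{S}_0\cup\cdots\cup\mathcal{S}_k)\cap(B_{\rho}^{n-k}\times B_{\rho}^{k})$, and (b) bound the Hausdorff dimension of the union of the strata below $k$.

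For (a) I would invoke the upper semi-continuity of the Gaussian density $\Theta$ along the level set flow (cf. \cite{E}): since $0$ has density $\Theta(0)=E[\mathcal{C}_k]$ and the density at any singular point in a sufficiently small neighborhood cannot exceed $\Theta(0)$ plus an arbitrarily small error, the ordering $E[\mathcal{C}_0]<E[\mathcal{C}_1]<\cdots<E[\mathcal{C}_{n-2}]$ in (\ref{entropy level}) forces every singular point in $B_{\rho}^{n-k}\times B_{\rho}^{k}$ (after possibly shrinking $\rho$) to have tangent flow $\sqrt{-t}\,\mathcal{C}_j$ with $j\le k$; hence its degree is at most $k$. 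This is exactly the remark preceding the theorem, made quantitative by choosing $\rho$ after $\epsilon$.

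For (b), the key mechanism is a Federer-type dimension-reduction / stratification argument adapted to the level set flow, exactly in the spirit of White's bounds on the singular set in \cite{W1}. One stratifies $\mathcal{S}$ by the maximal dimension $j$ of a Euclidean factor $\mathbb{R}^j$ along which \emph{some} tangent flow at the point splits; by White's stratification theorem this $j$-th stratum has Hausdorff dimension at most $j$. Since a point in $\mathcal{S}_i$ has tangent flow $\sqrt{-t}\,\mathcal{C}_i=\sqrt{-t}\,(S^{n-i-1}\times\mathbb{R}^i)$, which splits off exactly $\mathbb{R}^i$ and no more (the spherical factor is compact and, by White's uniqueness \cite{W2},\cite{W3}, the tangent flow is unique up to rotation), $\mathcal{S}_i$ is contained in the $i$-th stratum, so $\dim_{\mathcal H}(\mathcal{S}_0\cup\cdots\cup\mathcal{S}_{k-1})\le k-1$. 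Alternatively, and more self-containedly within this paper's framework, one can run the Reifenberg/Lipschitz-graph argument of \cite{CM3} recursively: near a point of $\mathcal{S}_{i}$ with $i\le k-1$, the same reasoning as in Remark \ref{Reifenberg} (applied with the $i$-cylinder in place of the $k$-cylinder) confines $\mathcal{S}_i$ locally to an $i$-dimensional Lipschitz graph, so $\mathcal{S}_i$ is $i$-rectifiable and in particular $\dim_{\mathcal H}\mathcal{S}_i\le i\le k-1$; taking a countable union over the finitely many strata and a covering of $B_{\rho}^{n-k}\times B_{\rho}^{k}$ by such neighborhoods preserves the bound $\dim_{\mathcal H}\le k-1$.

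The main obstacle is (b): making the recursive application of Remark \ref{Reifenberg} legitimate at points of the lower strata, since a priori those points need not have a \emph{single} well-defined cylindrical scale uniform over a neighborhood, and one must check that the density-drop argument of (a) still isolates the correct model $\mathcal{C}_i$ at scale-zero before the Colding--Minicozzi estimates (Theorem \ref{asymptotically cylindrical}) can be invoked. I expect this to be handled by a compactness argument: if no such uniform scale existed one would extract a sequence of $i$-cylindrical points with degenerating scales, pass to a blow-up, and contradict either the uniqueness of the tangent flow \cite{W2},\cite{W3} or the entropy gap (\ref{entropy level}); once the uniform scale is in hand the Lipschitz-graph conclusion of \cite{CM3} applies verbatim. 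Everything else—item (1), the identification of the relevant strata, and the final countable-union bookkeeping—is routine given the cited results.
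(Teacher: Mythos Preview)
The paper does not prove Theorem \ref{rectifiability}; it is quoted as a result from \cite{CM3}, with only the one-line indication that it is ``based on the aforementioned property and Remark \ref{Reifenberg},'' where the ``aforementioned property'' is precisely the density/upper-semicontinuity argument you give in (a). So your sketch is consistent with, and in fact fleshes out, what the paper merely cites: item (1) and part (a) are exactly what the paper points to, and for (b) your second alternative---recursively applying the Reifenberg/Lipschitz-graph machinery of \cite{CM3} at points of the lower strata---is the mechanism actually used in \cite{CM3}. The White-stratification route you offer first is also valid but is not what \cite{CM3} does; the paper's citation is to the Colding--Minicozzi argument, not to \cite{W1}.

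One small caution on your recursive argument for (b): the local $i$-dimensional Lipschitz graph at a point $p\in\mathcal{S}_i$ contains $\mathcal{S}_i$ only in a neighborhood of $p$, and $\mathcal{S}_i$ need not be closed (its closure may meet $\mathcal{S}_j$ for $j>i$ by the same upper-semicontinuity you used in (a)). So the ``countable covering'' step must be done stratum by stratum on the \emph{relatively} open set $\mathcal{S}_i$, not on its closure; this is routine $\sigma$-compactness bookkeeping, but it is worth saying explicitly, since it is also where the compactness/blow-up obstacle you flag actually lives in \cite{CM3}.
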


Analogous to (\ref{Hessian at round point}) for round points, Colding
and Minicozzi in \cite{CM4} showed that the Hessian at the $k$-cylindrical
point $0$ is
\begin{equation}
\nabla^{2}u\left(0\right)=\left(\begin{array}{cc}
\frac{-1}{n-k-1}\,\mathrm{I}_{n-k}\\
 & \mathrm{O}_{k}
\end{array}\right).\label{Hessian at cylindrical point}
\end{equation}

\begin{rem}
\label{discriminant}The Hessian at a different $k$-cylindrical point,
say $p$, is similar to the matrix (\ref{Hessian at cylindrical point})
in the sense that 
\[
\nabla^{2}u\left(p\right)=Q^{-1}\,\nabla^{2}u\left(0\right)\,Q
\]
for some orthogonal matrix $Q$ with $\textrm{Ker}\,\nabla^{2}u\left(p\right)$
the $k$-dimensional axis of the tangent $k$-cylinder at $p$. Because
of the rigid forms of Hessian at singular points (see (\ref{Hessian at round point})
and (\ref{Hessian at cylindrical point})), one can tell the type
of a singular point according to the Hessian (such as from the trace)
at that point. 
\end{rem}

Regarding the continuity of Hessian near $0$, we have the following
preliminary remark based on the asymptotically cylindrical behavior
of the level set flow outside the cone $\mathscr{C}_{\phi}$ (cf.
\cite{CM4}). 
\begin{rem}
\label{continuity of Hessian outside the cone}Given positive constants
$\phi$ and $\epsilon$, there there exists $r>0$ so that the level
set flow is $\left(\phi,\epsilon\right)$-asymptotically cylindrical
in $B_{r}$. Thus, for any point in $B_{r}\setminus\mathscr{C}_{\phi}$,
we can choose the tangent vectors in (\ref{Hessian at regular point})
in such a way that $\left\{ e_{1},\cdots,e_{n-k-1}\right\} $ is close
to the round-direction\footnote{That is, parallel to $S_{\sqrt{2\left(n-k-1\right)}}^{n-k-1}\times\left\{ 0\right\} .$}
of $\mathcal{C}_{k}$ and $\left\{ e_{n-k},\cdots,e_{n-1}\right\} $
is close to the axial-direction\footnote{That is, parallel to $\left\{ 0\right\} \times\mathbb{R}^{k}$.}
of $\mathcal{C}_{k}$. By comparing (\ref{Hessian at regular point})
with (\ref{Hessian at cylindrical point}) and taking Definition \ref{cylindrical scale}
into account, $\nabla^{2}u$ can be made as close to $\nabla^{2}u\left(0\right)$
as pleased in $B_{r}\setminus\mathscr{C}_{\phi}$, so long as $\epsilon$
is chosen sufficiently small.
\end{rem}

Unlike the situation in Section \ref{round points} where the Hessian
is automatically continuous at a round point, a priori we only know
that $\nabla^{2}u\left(x\right)\rightarrow\nabla^{2}u\left(0\right)$
as $x\rightarrow0$ from outside of the cone $\mathscr{C}_{\phi}$.
In fact, the continuity of Hessian from the inside of the cone depends
on the regularity of the singular set $\mathcal{S}$ near $0$, as
will be seen in the following theorem. The proof is based on the ideas
in \cite{CM5}. 
\begin{thm}
\label{continuity of Hessian}$\nabla^{2}u$ is continuous at the
$k$-cylindrical point $0$ if and only if near $0$ the singular
set $\mathcal{S}$ is a $C^{1}$ embedded $k$-manifold. 

In this case, $0$ is a local maximum point of $u$ near which we
have $\mathcal{S}=\left\{ u=0\right\} $.\footnote{Geometrically speaking, near the point $0$ the level set flow shrinks
to $\mathcal{S}$ at time $0$ and then vanishes.} Moreover, every singular point $x$ near $0$ must be a $k$-cylindrical
point with $T_{x}\mathcal{S}$ the $k$-dimensional axis of the tangent
$k$-cylinder at $x$.\footnote{By Remark \ref{discriminant}, this is $\textrm{Ker}\,\nabla^{2}u\left(x\right)$.}
\end{thm}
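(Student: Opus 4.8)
The plan is to prove the two directions separately, treating the ``only if'' direction (continuity of the Hessian forces regularity of $\mathcal{S}$) as the harder one.

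\medskip

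\noindent\textbf{The ``if'' direction.} Assume $\mathcal{S}$ is a $C^{1}$ embedded $k$-manifold near $0$. First I would argue that $T_{0}\mathcal{S}$ must equal the axis $\{0\}\times\mathbb{R}^{k}$ of the tangent $k$-cylinder: the blow-up of $\mathcal{S}$ at $0$ is the tangent space $T_0\mathcal S$, while the blow-up of the level set flow is $\sqrt{-t}\,\mathcal{C}_{k}$, whose singular set at any positive scale is the axis, so these must coincide. By Remark \ref{Reifenberg} every singular point near $0$ then lies on (and, since $\mathcal{S}$ is a $k$-manifold, fills out) the Lipschitz graph $y=\psi(z)$, which upgrades to $\psi\in C^{1}$ with $T_x\mathcal S$ the axis of the tangent cylinder at each $x$; this also yields $\mathcal{S}\cap(B_\rho^{n-k}\times B_\rho^k)=\mathcal{S}_k\cap(\cdots)$, so $0$ is not a saddle point and, by Remark \ref{no local minimum}, it is a local maximum with $\mathcal S=\{u=0\}$ nearby. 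For the continuity of $\nabla^{2}u$ at $0$: combine Remark \ref{continuity of Hessian outside the cone} (which handles $x\to0$ from outside $\mathscr{C}_{\phi}$) with an argument inside the cone. For $x$ inside $\mathscr{C}_{\phi}$ near $0$, the nearest singular point $p(x)\in\mathcal S$ satisfies $|x-p(x)|\le C|x|$ (small Lipschitz constant of $\psi$), and one parabolically rescales about $p(x)$: since $\nabla^{2}u$ is scale-invariant (see the footnote to (\ref{Hessian at regular point})), $\nabla^{2}u(x)$ equals the Hessian of the rescaled flow at the rescaled point, which lies at bounded distance from the $k$-cylindrical point $p(x)$ outside a cone of the rescaled picture; Theorem \ref{asymptotically cylindrical} applied at $p(x)$ (valid because $p(x)$ has a uniform cylindrical scale by Remark \ref{Reifenberg}) then forces $\nabla^{2}u(x)$ close to $\nabla^{2}u(p(x))$, and $\nabla^{2}u(p(x))\to\nabla^{2}u(0)$ because $p(x)\to0$ and the $C^1$ regularity of $\mathcal S$ makes the axis at $p(x)$ converge to the axis at $0$, so the conjugating orthogonal matrix in Remark \ref{discriminant} tends to the identity.

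\medskip

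\noindent\textbf{The ``only if'' direction.} Assume $\nabla^{2}u$ is continuous at $0$. The value is $\nabla^{2}u(0)$ as in (\ref{Hessian at cylindrical point}), whose kernel is $\{0\}\times\mathbb{R}^k$. For nearby singular points $x$, the Hessian $\nabla^{2}u(x)$ is forced close to $\nabla^2u(0)$; by Remark \ref{discriminant} the only singular type whose Hessian is close to (\ref{Hessian at cylindrical point}) — e.g. by comparing traces against the list (\ref{Hessian at round point}), (\ref{Hessian at cylindrical point}), using (\ref{entropy level}) to rule out degree $>k$ — is a $k$-cylindrical point, so $\mathcal S=\mathcal S_k$ near $0$ and $\mathrm{Ker}\,\nabla^2u(x)$ is the axis at $x$. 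In particular, by the last assertion of Remark \ref{Reifenberg} (the case where $\mathcal S_k$ equals the whole graph), $\psi\in C^1$ and $\mathcal S$ is a $C^1$ embedded $k$-manifold near $0$ with the stated tangent spaces; then exactly as above $0$ is a local max and $\mathcal S=\{u=0\}$. The main subtlety is that Remark \ref{Reifenberg} only \emph{a priori} places $\mathcal S_k$ inside the Lipschitz graph; I must genuinely show the inclusion is an equality near $0$. For this I would use the continuity of the Hessian to produce, at each scale and near $0$, a singular point on every vertical slice $\{z=z_0\}$: if some slice contained no singular point, then $u(\cdot,z_0)$ would have a nonvanishing gradient along a disc whose boundary circle is near the round sphere of the cylinder, contradicting (by the intermediate value / degree argument on the inward normal, or by the presence of a local max forced by Remark \ref{no local minimum} together with the cylindrical asymptotics) the structure dictated by $\nabla^2u(0)$ having an $(n-k)$-dimensional nondegenerate block. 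This slice-filling is the crux.

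\medskip

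\noindent\textbf{Expected main obstacle.} The hard part is the equality $\mathcal S\cap(B_\rho^{n-k}\times B_\rho^k)=\mathcal S_k\cap(\cdots)=\mathrm{graph}(\psi)$ in the ``only if'' direction — i.e. promoting the one-sided inclusion of Remark \ref{Reifenberg} to an honest $C^1$ $k$-manifold, which requires extracting, from pointwise continuity of $\nabla^2u$ at the single point $0$, the existence of singular points on \emph{all} nearby axial slices together with uniform control of their tangent axes. The ``if'' direction's cone-interior estimate is routine once one invokes scale-invariance of the Hessian and Theorem \ref{asymptotically cylindrical} at the nearby cylindrical points.
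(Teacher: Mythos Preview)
Your overall structure is right, and the ``if'' direction is essentially the paper's argument (the paper compares $\nabla^2u(y,z)$ to $\nabla^2u(\psi(z),z)$ on the same $z$-slice rather than to the nearest singular point, but the mechanism --- uniform cylindrical scale from Remark \ref{Reifenberg} plus scale-invariance of $\nabla^2 u$ --- is identical). One small gap in your ``if'' direction: before invoking the last part of Remark \ref{Reifenberg} you need $\mathcal{S}=\mathcal{S}_k$ near $0$, and your assertion that $\mathcal{S}$ ``fills out'' the Lipschitz graph needs the paper's observation that $\mathcal{S}_k$ is dense in the $k$-manifold $\mathcal{S}$ (by Theorem \ref{rectifiability} the lower strata have Hausdorff dimension $\le k-1$, so cannot contain an open piece of a $k$-manifold) together with upper semicontinuity of Gaussian density to close it up.

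The real gap is in the ``only if'' direction, precisely at the crux you flagged. You are right that the issue is producing a singular point on every slice $\{z=z_0\}$, and right that the slice-maximizer $y_0$ of $u(\cdot,z_0)$ lies in the interior of the cone (by the cylindrical asymptotics on $\partial\mathscr{C}_\phi$), so $\nabla u(y_0,z_0)\in\{0\}\times\mathbb{R}^k$. But neither of your proposed contradictions works as stated: a degree argument on the $y$-component of $N=\nabla u/|\nabla u|$ only recovers a point where $\nabla_y u=0$, which is the slice-max you already have; and the block structure of $\nabla^2 u(0)$ by itself places no constraint on whether the $z$-component of $\nabla u$ vanishes there. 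The paper's idea is to feed the slice-max into the PDE (\ref{level set flow}): if $\nabla u(x_0)\ne 0$ then $N(x_0)\in\{0\}\times\mathbb{R}^k=\ker\nabla^2u(0)$, so
\[
-\bigl(\mathrm{I}-N(x_0)\otimes N(x_0)\bigr)\cdot\nabla^2u(0)=-\mathrm{tr}\,\nabla^2u(0)=\frac{n-k}{n-k-1},
\]
and continuity of $\nabla^2 u$ at $0$ (with the explicit margin (\ref{continuity of Hessian: Hessian})) gives $-\bigl(\mathrm{I}-N(x_0)\otimes N(x_0)\bigr)\cdot\nabla^2u(x_0)>1$, contradicting (\ref{level set flow}). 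That use of the equation is the missing ingredient; once you have it, Remark \ref{discriminant} forces $x_0\in\mathcal{S}_k$, Remark \ref{Reifenberg} then upgrades $\psi$ to $C^1$, and the local-max/$\mathcal{S}=\{u=0\}$ statements follow by integrating $\nabla u=0$ along $\mathcal{S}$ as you said.
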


\begin{proof}
($\Rightarrow$) Let $\phi$ and $\epsilon$ be small positive constants
(as is required in Remark \ref{Reifenberg}), there exists $\delta>0$
such that for every $k$-cylindrical point $x=\left(y,z\right)$ in
$\bar{B}_{\delta}^{n-k}\times\bar{B}_{\delta}^{k}$ the level set
flow is $\left(\phi,\epsilon\right)$-asymptotically cylindrical about
$x$ with cylindrical scale $\delta$; moreover,
\begin{equation}
\left|\nabla^{2}u\left(x\right)-\nabla^{2}u\left(0\right)\right|<\label{continuity of Hessian: Hessian}
\end{equation}
\[
\frac{1}{\sqrt{n}}\,\min\left\{ \frac{1}{n-k-1},\,\,\min_{0\leq l\leq n-2,\,\,l\neq k}\left|\frac{1}{n-k-1}-\frac{1}{n-l-1}\right|\right\} .
\]
for every $x=\left(y,z\right)$ in $\bar{B}_{\delta}^{n-k}\times\bar{B}_{\delta}^{k}$. 

Fix $z_{0}\in B_{\delta}^{k}$. Let $y_{0}\in\bar{B}_{\delta}^{n-k}\cap\left\{ z=z_{0}\right\} $
be such that 
\begin{equation}
u\left(y_{0},z_{0}\right)=\max_{\left|y\right|\leq\delta}\,u\left(y,z_{0}\right).\label{continuity of Hessian: maximum point on each level}
\end{equation}
Note that a priori such a point $y_{0}$ might not be unique. In view
of the normal vector field $N=\frac{\nabla u}{\left|\nabla u\right|}$
on $\left\{ \left|y\right|\geq\left|z_{0}\right|\tan\phi\right\} \cap\left\{ z=z_{0}\right\} $
(with the asymptotically cylindrical behavior of the level set flow
taken into account), we infer that $\left|y_{0}\right|<\left|z_{0}\right|\tan\phi$.
Then it follows from (\ref{continuity of Hessian: maximum point on each level})
that 
\begin{equation}
\nabla u\left(y_{0},z_{0}\right)\in\left\{ 0\right\} \times\mathbb{R}^{k}.\label{continuity of Hessian: gradient}
\end{equation}
We claim that $\nabla u\left(y_{0},z_{0}\right)=0$, i.e., $x_{0}=\left(y_{0},z_{0}\right)$
is a critical point. To prove the claim, suppose the contrary that
$\nabla u\left(x_{0}\right)\neq0$, so by Theorem \ref{regularity}
$x_{0}$ would be a regular point. By (\ref{Hessian at cylindrical point}),
(\ref{continuity of Hessian: Hessian}), and (\ref{continuity of Hessian: gradient})
we would have 
\[
-\left(\mathrm{I}-\frac{\nabla u}{\left|\nabla u\right|}\left(x_{0}\right)\varotimes\frac{\nabla u}{\left|\nabla u\right|}\left(x_{0}\right)\right)\cdotp\nabla^{2}u\left(0\right)=\frac{n-k}{n-k-1},
\]
\[
-\left(\mathrm{I}-\frac{\nabla u}{\left|\nabla u\right|}\left(x_{0}\right)\varotimes\frac{\nabla u}{\left|\nabla u\right|}\left(x_{0}\right)\right)\cdotp\left(\nabla^{2}u\left(x_{0}\right)-\nabla^{2}u\left(0\right)\right)>\frac{-1}{n-k-1},
\]
Thus,
\[
-\left(\mathrm{I}-\frac{\nabla u}{\left|\nabla u\right|}\left(x_{0}\right)\varotimes\frac{\nabla u}{\left|\nabla u\right|}\left(x_{0}\right)\right)\cdotp\nabla^{2}u\left(x_{0}\right)>1
\]
contradicting (\ref{level set flow}).

Following from the last paragraph, given $z_{0}\in B_{\delta}^{k}$
there exists $y_{0}\in\mathscr{C}_{\phi}\cap\left\{ z=z_{0}\right\} $
such that $x_{0}=\left(y_{0},z_{0}\right)\in\mathcal{S}$. Using Remark
\ref{discriminant} and (\ref{continuity of Hessian}) we deduce that
$x_{0}$ is indeed a $k$-cylindrical point. It then follows from
Remark \ref{Reifenberg} that 
\[
\mathcal{S}\cap\left(B_{\delta}^{n-k}\times B_{\delta}^{k}\right)=\mathcal{S}_{k}\cap\left(B_{\delta}^{n-k}\times B_{\delta}^{k}\right)
\]
is a graph $y=\psi\left(z\right)$, where $\psi:B_{\delta}^{k}\rightarrow B_{\delta}^{n-k}$
is a $C^{1}$ map with $\psi\left(0\right)=0$, and that for every
$x\in\mathcal{S}\cap\left(B_{\delta}^{n-k}\times B_{\delta}^{k}\right)$,
$T_{x}\mathcal{S}$ is the axis of the tangent cylinder of the level
set flow at $x$. Note in particular that the maximum point in (\ref{continuity of Hessian: maximum point on each level})
turns out to be unique and that $y_{0}=\psi\left(z_{0}\right)$. On
the other hand, by Theorem \ref{regularity} we have
\[
\frac{\partial}{\partial z}\left[u\left(\psi\left(z\right),z\right)\right]=\nabla u\left(\psi\left(z\right),z\right)\cdot\left[\frac{\partial\psi\left(z\right)}{\partial z},\mathrm{I}_{k}\right]=0,
\]
which, by the mean value theorem, implies that 
\[
u\left(y_{0},z_{0}\right)=u\left(\psi\left(z_{0}\right),z_{0}\right)=u\left(0\right).
\]
Therefore, $u\left(0\right)$ is the maximum value of $u$ in $B_{\delta}^{n-k}\times B_{\delta}^{k}$
with
\[
\left\{ u=0\right\} \cap\left(B_{\delta}^{n-k}\times B_{\delta}^{k}\right)=\mathcal{S}\cap\left(B_{\delta}^{n-k}\times B_{\delta}^{k}\right).
\]

$\left(\Leftarrow\right)$ Firstly, note that by Theorem \ref{rectifiability},
the $k$-cylindrical points must be densely distributed in the $C^{1}$
embedded $k$-manifold $\mathcal{S}$. It then follows from the upper
semicontinuity of the Gaussian density (cf. \cite{E}) that $\mathcal{S}=\mathcal{S}_{k}$
near $0$.

Given $\eta>0$, choose $\delta>0$ such that $\mathcal{S}\cap\left(B_{\delta}^{n-k}\times B_{\delta}^{k}\right)=\mathcal{S}_{k}\cap\left(B_{\delta}^{n-k}\times B_{\delta}^{k}\right)$
is a graph $y=\psi\left(z\right)$, where $\psi:B_{\delta}^{k}\rightarrow B_{\delta}^{n-k}$
is $C^{1}$ with small gradient; in particular, in view of Remark
\ref{Reifenberg} and Remark \ref{discriminant}, we may assume that
\begin{equation}
\left|\nabla^{2}u\left(\psi\left(z\right),z\right)-\nabla^{2}u\left(0\right)\right|\leq\frac{\eta}{2}\label{continuity of Hessian: vertical continuity}
\end{equation}
for every $z\in B_{\delta}^{k}$. Additionally, by Remark \ref{Reifenberg}
and Remark \ref{continuity of Hessian outside the cone} we may further
assume that every $k$-cylindrical point in $B_{\delta}^{n-k}\times B_{\delta}^{k}$
has the uniform cylindrical scale $\delta$ with the parameters $\phi$
and $\epsilon$ (see Definition \ref{cylindrical scale}) sufficiently
small (depending on $n$, $k$, and $\eta$) that
\begin{equation}
\left|\nabla^{2}u\left(y,z\right)-\nabla^{2}u\left(\psi\left(z\right),z\right)\right|\leq\frac{\eta}{2}\label{continuity of Hessian: horizontal continuity}
\end{equation}
for every $y\in B_{\delta}^{n-k}$ and $z\in B_{\delta}^{k}$. Combining
(\ref{continuity of Hessian: vertical continuity}) and (\ref{continuity of Hessian: horizontal continuity})
yield 

\[
\left|\nabla^{2}u\left(x\right)-\nabla^{2}u\left(0\right)\right|\leq\eta
\]
for every $x\in B_{\delta}^{n-k}\times B_{\delta}^{k}$. Thus, $\nabla^{2}u$
is continuous at $0$.
\end{proof}
If $\nabla^{2}u$ is continuous at the $k$-cylindrical point $0$,
then by (the ``only if'' part of) Theorem \ref{continuity of Hessian},
$\mathcal{S}=\mathcal{S}_{k}$ near $0$ and it is a $C^{1}$ embedded
$k$-manifold. On the other hand, applying (the ``if'' part of)
Theorem \ref{continuity of Hessian} to the nearby singular points
yields that $\nabla^{2}u$ is continuous at every singular point near
$0$ and hence it is continuous on a neighborhood of $0$. Thus, we
have the following corollary.
\begin{cor}
\label{locally C^2}The Hessian $\nabla^{2}u$ is continuous at a
$k$-cylindrical point $0$ if and only if $u\in C^{2}$ in a neighborhood
of $0$.
\end{cor}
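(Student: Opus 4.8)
The plan is to read off the corollary from Theorem \ref{continuity of Hessian} by bootstrapping: continuity of $\nabla^{2}u$ at the single point $0$ will be upgraded to continuity on an entire neighborhood, which is exactly the assertion $u\in C^{2}$ near $0$. The reverse implication is trivial, so the content is the forward one.

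For the reverse implication I would simply note that if $u\in C^{2}\left(B_{\delta}^{n-k}\times B_{\delta}^{k}\right)$ for some $\delta>0$, then by definition $\nabla^{2}u$ exists and is continuous on that neighborhood, in particular at $0$. For the forward implication, assume $\nabla^{2}u$ is continuous at the $k$-cylindrical point $0$. First I would invoke the ``only if'' direction of Theorem \ref{continuity of Hessian}: it yields $\delta>0$ such that $\mathcal{S}\cap\left(B_{\delta}^{n-k}\times B_{\delta}^{k}\right)=\mathcal{S}_{k}\cap\left(B_{\delta}^{n-k}\times B_{\delta}^{k}\right)$ is a $C^{1}$ embedded $k$-manifold and such that every singular point $x$ in this neighborhood is again a $k$-cylindrical point (with $T_{x}\mathcal{S}$ the axis of its tangent cylinder). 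Then, for each such singular point $x$ sitting in a slightly smaller ball, the singular set near $x$ is a piece of that same $C^{1}$ embedded $k$-manifold, so the hypothesis of the ``if'' direction of Theorem \ref{continuity of Hessian} holds at $x$; applying that direction at $x$ gives that $\nabla^{2}u$ is continuous at $x$. At every regular point $\nabla^{2}u$ is continuous because $u$ is smooth there by Theorem \ref{regularity}. Since, in the smaller ball, every point is either regular or a $k$-cylindrical point of the type just described, $\nabla^{2}u$ is continuous throughout that ball, i.e. $u\in C^{2}$ near $0$.

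The only step needing a little care — and the closest thing to an obstacle here — is the bookkeeping that turns the global-on-a-neighborhood conclusion of the ``only if'' direction at $0$ into the precise hypothesis of the ``if'' direction at a nearby singular point $x$: namely that ``$\mathcal{S}$ is a $C^{1}$ embedded $k$-manifold near $0$'' restricts to ``$\mathcal{S}$ is a $C^{1}$ embedded $k$-manifold near $x$'' once $\delta$ is shrunk so that a full ball about $x$ stays inside the original neighborhood, and that $x$ is genuinely a $k$-cylindrical point so that Theorem \ref{continuity of Hessian} is even applicable at $x$. Both facts are part of the stated conclusion of Theorem \ref{continuity of Hessian} (together with Remark \ref{Reifenberg} and Remark \ref{discriminant}), so no new estimate is required, and the corollary follows.
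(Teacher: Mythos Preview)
Your proof is correct and follows essentially the same approach as the paper: use the ``only if'' direction of Theorem \ref{continuity of Hessian} at $0$ to learn that $\mathcal{S}=\mathcal{S}_{k}$ is a $C^{1}$ embedded $k$-manifold near $0$, then feed this back into the ``if'' direction at each nearby singular point to obtain continuity of $\nabla^{2}u$ there, while continuity at regular points is automatic from smoothness. The paper states this argument in a single short paragraph preceding the corollary; your version simply makes the bookkeeping (shrinking the neighborhood, checking that nearby singular points are again $k$-cylindrical) more explicit.
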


Let us conclude this section with the following proof of the characterization
of globally $C^{2}$ regularity in \cite{CM5} using Theorem \ref{continuity of Hessian}.
The focus is on seeing why the flow has only one singular time when
$u$ is globally $C^{2}$.
\begin{thm}
\label{globally C^2}The solution $u$ to (\ref{level set flow})
is globally $C^{2}$ if and only if the first singular time $T$ of
the level set flow is the time of extinction\footnote{Namely, $T=\max u$. In other words, the mean curvature flow stating
at the initial hypersurface becomes singular at time $T$ and then
vanishes.} and that the set $u=T$ is either a single round point or a closed
connected $C^{1}$ embedded $k$-manifold consisting of $k$-cylindrical
points for some $k\in\left\{ 1,\cdots,n-2\right\} $.
\end{thm}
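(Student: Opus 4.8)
The plan is to deduce Theorem~\ref{globally C^2} from the localized statement Theorem~\ref{continuity of Hessian} (together with Section~\ref{round points} for round points), so the main work is bookkeeping about what happens globally when $u\in C^{2}(\bar\Omega)$. I would organize the argument around the singular set $\mathcal{S}$ and the structure of its connected components.

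For the ``only if'' direction, suppose $u\in C^{2}(\bar\Omega)$. By Theorem~\ref{regularity}, $\mathcal{S}$ is the critical set of $u$, and by Remark~\ref{no local minimum} every point of $\mathcal{S}$ is either a local maximum point or a saddle point. The first step is to rule out saddle points: near a saddle point $p$, the level set flow splits locally into two pieces past time $u(p)$, so by the rapid clearing-out phenomenon of \cite{CM3} (cf.\ Theorem~\ref{no type I saddle} referenced later) the second fundamental form blows up faster than $|t-u(p)|^{-1/2}$, which by \eqref{Hessian at regular point} forces $|\nabla^{2}u|\to\infty$ along a sequence of regular points approaching $p$, contradicting $u\in C^{2}$. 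Hence every singular point is a local maximum point of $u$; in particular no two distinct singular points can lie on the same level set of $u$ unless\dots{} actually, rather than argue pointwise I would pass to components. Fix a singular point $p$. If $p$ is a round point, Section~\ref{round points} says $p$ is isolated in $\mathcal{S}$ and $\nabla^{2}u(p)=\frac{-1}{n-1}\mathrm{I}_{n}$. If $p$ is a $k$-cylindrical point, then since $u\in C^{2}$, $\nabla^{2}u$ is continuous at $p$, so by Theorem~\ref{continuity of Hessian} the component of $\mathcal{S}$ through $p$ is a $C^{1}$ embedded $k$-manifold near $p$ consisting entirely of $k$-cylindrical points, on which $u$ equals the constant $u(p)$. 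Thus each connected component of $\mathcal{S}$ is either a single round point or a closed (it is a closed subset of the compact $\bar\Omega$, and being a $C^{1}$ $k$-manifold without boundary near each of its points it is a closed manifold) connected $C^{1}$ embedded $k$-manifold of $k$-cylindrical points, and on each component $u$ is constant.

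Next I would show there is only one component, hence one singular time. The key input is Huisken's monotonicity / the fact that the level set flow sweeps out $\Omega$: $\mathcal{S}$ is compact and $u$ is constant on each of its finitely-or-infinitely-many components with distinct components possibly carrying distinct values. Let $T$ be the first singular time, i.e.\ $T=\min_{\mathcal S}u$, attained on some component $\mathcal{S}'$. Since points of $\mathcal{S}'$ are local maxima of $u$, $u<T$ on a punctured neighborhood of $\mathcal{S}'$; but the level set flow is connected and the region $\{u>T\}$ would then be disconnected from the rest of $\Omega$ unless $\{u>T\}$ is exactly a neighborhood of $\mathcal{S}'$ that has already vanished --- more precisely, for $t$ slightly less than $T$ the time-slice $\{u=t\}$ near $\mathcal{S}'$ is a closed hypersurface bounding a region that shrinks to $\mathcal{S}'$, and since $\{u=t\}$ is \emph{connected} for $t$ just below the first singular time (it is the smooth mean curvature flow starting from the connected $\Sigma_0$, which stays connected by the maximum principle / strong maximum principle, as recorded in Section~\ref{introduction}), the entire time-slice $\{u=t\}$ lies near $\mathcal{S}'$ and the whole flow becomes extinct at $T$. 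Therefore $T=\max_\Omega u$ is the extinction time, $\mathcal{S}=\mathcal{S}'=\{u=T\}$ is a single component, and by the dichotomy above it is either one round point or a closed connected $C^{1}$ embedded $k$-manifold of $k$-cylindrical points.

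For the ``if'' direction, assume $T$ is the extinction time and $\{u=T\}$ is of one of the two stated forms; I must show $u\in C^{2}(\bar\Omega)$. At every regular point $u$ is smooth, so it suffices to check continuity of $\nabla^{2}u$ at each singular point $p\in\{u=T\}$. If $\{u=T\}$ is a single round point, Section~\ref{round points} gives $u\in C^{2}$ near it. If $\{u=T\}$ is a closed connected $C^{1}$ embedded $k$-manifold of $k$-cylindrical points, then near each $p$ the singular set is exactly that $C^{1}$ $k$-manifold, so the hypothesis of the ``$\Leftarrow$'' direction of Theorem~\ref{continuity of Hessian} is met at $p$, giving continuity of $\nabla^{2}u$ at $p$; equivalently, by Corollary~\ref{locally C^2}, $u\in C^{2}$ in a neighborhood of $p$. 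Covering the compact set $\{u=T\}$ by finitely many such neighborhoods and combining with smoothness at regular points yields $u\in C^{2}(\bar\Omega)$.

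The main obstacle is the global connectedness/uniqueness-of-singular-time step in the ``only if'' direction: Theorem~\ref{continuity of Hessian} is purely local and says nothing about why a second singular component cannot appear at a later time. One has to genuinely use that the time-slices $\{u=t\}$ are connected up to the first singular time, that a singular point is necessarily a \emph{local maximum} of $u$ (no saddles, by the clearing-out argument), and that the level set flow sweeps $\Omega$ monotonically, so that once the (connected) flow contracts toward the first singular component it must vanish entirely --- leaving no room in $\Omega$ for any further singular set. Making the phrase ``the flow vanishes right after $T$'' rigorous (e.g.\ via $\{u>T\}$ being an open neighborhood of $\mathcal S'$ whose closure is disjoint from $\partial\Omega$, forcing $\{u>T\}$ to be all that remains) is the delicate point; everything else is a direct appeal to the results already assembled in this section.
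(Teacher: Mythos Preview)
Your overall strategy for both directions is the paper's strategy, but the $(\Rightarrow)$ direction has one genuine error and one step that is sketchier than you acknowledge.

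\textbf{The saddle-point step is wrong and unnecessary.} You claim that at a saddle point the type~II blow-up ``forces $|\nabla^{2}u|\to\infty$ along a sequence of regular points''. This cannot be true: by Theorem~\ref{regularity} we always have $u\in C^{1,1}$, so $\nabla^{2}u$ is globally bounded regardless of the singularity type. The entries of \eqref{Hessian at regular point} are ratios like $A/H$, $\nabla H/H^{2}$, $|A/H|^{2}+\triangle H/H^{3}$, which are parabolic-scale-invariant and do not blow up merely because $H$ grows faster than $|t-u(p)|^{-1/2}$. Fortunately you do not need this step at all: once $u\in C^{2}$, $\nabla^{2}u$ is continuous at every $k$-cylindrical point, and the conclusion of Theorem~\ref{continuity of Hessian} already says that such a point is a \emph{local maximum} of $u$. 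So saddles are excluded directly, without invoking Section~\ref{saddle points}.

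\textbf{The ``only one component'' step needs a concrete mechanism.} You correctly identify that connectedness of $\{u=t\}$ for $t<T$ is the key, but the passage ``the time-slice $\{u=t\}$ near $\mathcal S'$ is a closed hypersurface \dots\ the entire time-slice lies near $\mathcal S'$'' hides the real work: you must show that the portion of $\{u=t\}$ within a fixed tubular neighborhood of the component $\Gamma$ is both compact and \emph{relatively open} in $\{u=t\}$. The paper does this by using Remark~\ref{Reifenberg} and the compactness of $\Gamma$ to get a uniform $(\phi,\epsilon)$-cylindrical scale $\delta$ along $\Gamma$, and then checking that any $x$ with $u(x)=t$ and $\mathrm{dist}(x,\Gamma)\le\delta/2$ actually satisfies $\mathrm{dist}(x,\Gamma)<\delta/(2\sqrt{2})$, giving relative openness. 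After that, $\{u=t\}$ lies in a $C\sqrt{T-t}$-tube around $\Gamma$, and the final step $\Gamma=\{u=T\}$ uses non-fattening of the level set flow (which you do not mention): if there were another piece of $\{u=T\}$ at positive distance $\varrho$ from $\Gamma$, it would be missed by the shrinking tubes, contradicting that $\{u=T\}$ is the Hausdorff limit of $\{u=t\}$. Your $(\Leftarrow)$ direction is fine and matches the paper.
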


\begin{proof}
$\left(\Leftarrow\right)$ It is clear from Theorem \ref{regularity},
Section \ref{round points}, and Theorem \ref{continuity of Hessian}.

$\left(\Rightarrow\right)$ Let $T$ be the first singular time and
choose a singular point $p\in\left\{ u=T\right\} $. If $p$ is a
round point, then by Section \ref{round points} there exists $\delta>0$
so that in a neighborhood of $p$ the set $u=t$ is asymptotic to
a sphere for every $t\in\left(T-\delta,T\right)$. For every $t\in\left(T-\delta,T\right)$,
the asymptotically spherical part of $u=t$ near $p$ is clearly compact
and relatively open in $\left\{ u=t\right\} $; therefore, it is indeed
the the whole of $u=t$ by the connectedness of $\left\{ u=t\right\} $.\footnote{Note that for every $t\in\left(0,T\right)$, the hypersurface $u=t$
is diffeomorphic to $u=0$ and hence is connected.} In light of this, the flow shrinks to the point $p$ at time $T$
and so the theorem is proved. Thus, for the rest of our discussion,
let us assume that $p$ is a $k$-cylindrical point for some $k\in\left\{ 1,\cdots,n-2\right\} $.

Let $\Gamma$ be the connected component of $p$ in the set of all
$k$-cylindrical points in $u=T$. By Theorem \ref{continuity of Hessian}
$\Gamma$ is a closed\footnote{In proving the compactness of $\Gamma$, we use the continuity of
$u$ and the upper semicontinuity of Gaussian density as well.} connected $C^{1}$ embedded $k$-manifold. Let $\phi$ and $\epsilon$
be sufficiently small positive constants,\footnote{This is for the application of Remark \ref{Reifenberg}. In addition,
for the latter purpose we require that $\epsilon<2-\sqrt{2}$.} by Remark \ref{Reifenberg} and the compactness of $\Gamma$, there
exists $\delta>0$ so that the $\left(\phi,\epsilon\right)$-asymptotically
cylindrical scale (see Definition \ref{cylindrical scale}) of any
point on $\Gamma$ is at least $\delta$. It follows that for every
$q\in\Gamma$, in $B_{\delta}\left(q\right)\setminus\mathscr{C}_{\phi}^{\Gamma}\left(q\right)$
the set $u=t$, whenever it is nonempty, is $\epsilon\sqrt{T-t}$-close
in the Hausdorff sense to the $k$-cylinder $\mathcal{C}_{k}^{\Gamma,\sqrt{T-t}}\left(q\right)$,
where $\mathscr{C}_{\phi}^{\Gamma}\left(q\right)$ is the cone obtained
by rotating the cone $\mathscr{C}_{\phi}$ (see (\ref{cone})) to
have the axis $T_{q}\Gamma$ and then translating by the vector $q$
(so that $q$ is the apex of $\mathscr{C}_{\phi}^{\Gamma}\left(q\right)$);
$\mathcal{C}_{k}^{\Gamma,\sqrt{T-t}}\left(q\right)$ is the $k$-cylinder
obtained by firstly rotating the $k$-cylinder $\mathcal{C}_{k}$
(see (\ref{generalized cylinder})) to have the axis $T_{q}\Gamma$,
then scaling by the factor $\sqrt{T-t}$ (so that the radius becomes
$\sqrt{2\left(n-k-1\right)\left(T-t\right)}$), and lastly translating
by the vector $q$. In particular, the distance from $q$ to any point
on $\left\{ u=t\right\} \cap B_{\delta}\left(q\right)\cap\Pi_{n-k}^{\Gamma}\left(q\right)$,
where $\Pi_{n-k}^{\Gamma}\left(q\right)$ is the $\left(n-k\right)$-dimensional
plane passing through $q$ and orthogonal to $T_{q}\Gamma$, is no
more than $\left(\sqrt{2\left(n-k-1\right)}+\epsilon\right)\sqrt{T-t}$.

As such, for every $t\in\left[0,T\right)$ with $T-t\leq\frac{\delta^{2}}{32\left(n-k-1\right)}$,
the set
\[
\hat{\Sigma}_{t}=\left\{ x:u\left(x\right)=t\,\,\textrm{and}\,\,\textrm{dist}\left(x,\Gamma\right)\leq\frac{\delta}{2}\right\} .
\]
is nonempty and compact; moreover, we will show that $\hat{\Sigma}_{t}$
is relatively open in $u=t$, so by the connectedness\footnote{Note that the hypersurface $u=t$ is diffeomorphic to the hypersurface
$u=0$ when $t<T$.} of $u=t$ we then have $\hat{\Sigma}_{t}=\left\{ u=t\right\} $.
To prove the relatively openness of $\hat{\Sigma}_{t}$ in $u=t$,
let us fix $x\in\hat{\Sigma}_{t}$. By the compactness of $\Gamma$
there exists $q\in\Gamma$ so that $\textrm{dist}\left(x,\Gamma\right)=\left|x-q\right|\leq\frac{\delta}{2}$;
moreover, $x-q$ is orthogonal $T_{q}\Gamma$ and hence $x\in\Pi_{n-k}^{\Gamma}\left(q\right)$.
Since $x\in\left\{ u=t\right\} \cap B_{\delta}\left(q\right)\cap\Pi_{n-k}^{\Gamma}\left(q\right)$,
it follows from the discussion in the last paragraph that 
\[
\left|x-q\right|<\left(\sqrt{2\left(n-k-1\right)}+\epsilon\right)\sqrt{T-t}<\frac{\delta}{2\sqrt{2}},
\]
where the last inequality is due to $T-t\leq\frac{\delta^{2}}{32\left(n-k-1\right)}$
and $\epsilon<2-\sqrt{2}$. Then for any $r\in\left(0,\frac{\sqrt{2}-1}{2\sqrt{2}}\delta\right)$
we have $B_{r}\left(x\right)\cap\left\{ u=t\right\} \subset\hat{\Sigma}_{t}$;
namely, $x$ has an open neighborhood in $\left\{ u=t\right\} $ that
is contained in $\hat{\Sigma}_{t}$.

To finish the proof, it suffices to show that $\Gamma=\left\{ u=T\right\} $.
Were $\Gamma\neq\left\{ u=T\right\} $, then by Theorem \ref{continuity of Hessian}
and the compactness of both $\left\{ u=T\right\} $ and $\Gamma$,
we would have 
\begin{equation}
\textrm{dist}\left(\left\{ u=T\right\} \setminus\Gamma,\,\Gamma\right)=\varrho>0.\label{globally C^2: distance}
\end{equation}
On the other hand, from the result\footnote{That is, $\hat{\Sigma}_{t}=\left\{ u=t\right\} $.}
and argument in the last paragraph, we infer that the hypersurface
$u=t$ is indeed contained in the $\left(\sqrt{2\left(n-k-1\right)}+\epsilon\right)\sqrt{T-t}$-tubular
neighborhood of $\Gamma$ for every $t\in\left[0,T\right)$ with $T-t\leq\frac{\delta^{2}}{32\left(n-k-1\right)}$.
Particularly, for every $t\in\left[0,T\right)$ sufficiently close
to $T$ the set $u=t$ is contained in the $\frac{\varrho}{2}$-tubular
neighborhood of $\Gamma$, contradicting (\ref{globally C^2: distance})
because the level set flow is non-fattening (cf. \cite{I1} and \cite{W1}).
\end{proof}

\section{\label{Lojasiewicz inequality and type I singularity}\L ojasiewicz
Inequality and Type I Singularity}

In this section we aim to prove Theorem \ref{main theorem}, which,
in view of Theorem \ref{characterization of continuity of Hessian}
and the footnote in Definition \ref{regular singular point}, amounts
to characterizing the regular singular points (see Definition \ref{regular singular point})
by means of the type I condition (see Definition \ref{type I singularity})
under the hypothesis that $\mathcal{S}=\mathcal{S}_{0}\cup\mathcal{S}_{1}$.
This is done in three steps. Firstly, it is seen in Proposition \ref{Lojasiewicz inequality}
that the type I condition is a necessary condition for a singular
point to be a regular singular point. Next, in Section \ref{saddle points}
we prove that a type I singular point must be a local maximum point
(see Theorem \ref{no type I saddle}). Lastly, the proof is completed
in Section \ref{neckpinch singularities} by establishing Theorem
\ref{characterization of regular singular point}.

To start with, let us introduce the following definition. 
\begin{defn}
\label{regular singular point}(Regular Singular Points) A singular
point of $u$ is called a regular singular point if near which $u$
is $C^{2}$.\footnote{A round point is automatically a regular singular point (see Section
\ref{round points}). For a $k$-cylindrical point $p$, $p$ is a
regular singular point $\Leftrightarrow$ $\nabla^{2}u$ is continuous
at $p$ (see Corollary \ref{locally C^2}) $\Leftrightarrow$ there
exists $\delta>0$ so that $\mathcal{S}\cap B_{\delta}\left(p\right)$
is a $C^{1}$ embedded $k$-manifold (see Theorem \ref{continuity of Hessian}).}
\end{defn}

\noindent As pointed out in \cite{CM6}, near a regular singular
point the solution $u$ must satisfy a \L ojasiewicz inequality. For
the sake of completeness, we write the complete statement in the following
proposition and provide a proof.
\begin{prop}
\label{Lojasiewicz inequality}If $0$ is a regular singular point,
then for every $\epsilon>0$ there exists $\delta>0$ so that
\[
\left|u\right|^{\frac{1}{2}}\leq C_{n,k,\epsilon}\left|\nabla u\right|
\]
in $B_{\delta}$, where $k=\textrm{nullity\ensuremath{\left[\ensuremath{\nabla^{2}u}\left(0\right)\right]\,\in\left\{  0,\cdots,n-2\right\} } }$
and $C_{n,k,\epsilon}\rightarrow\sqrt{\frac{n-k-1}{2}}$ as $\epsilon\searrow0$.
\end{prop}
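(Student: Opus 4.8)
The plan is to exploit the already-established continuity of $\nabla^2 u$ near $0$ together with the rigid normal form of the Hessian at a $k$-cylindrical point. First I would record what Theorem \ref{continuity of Hessian} gives us: since $0$ is a regular singular point, $\nabla^2 u$ is continuous on a neighborhood of $0$, $\mathcal{S}$ is a $C^1$ embedded $k$-manifold through $0$ (a single point if $k=0$), $0$ is a local maximum of $u$ with $\{u=0\}=\mathcal{S}$ locally, and at every singular point $x$ near $0$ one has $\nabla^2 u(x)=Q_x^{-1}\nabla^2 u(0)\,Q_x$ with $\mathrm{Ker}\,\nabla^2 u(x)=T_x\mathcal{S}$, the axis of the tangent cylinder. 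In particular the nonzero eigenvalues of $\nabla^2 u(x)$ are all equal to $-\tfrac{1}{n-k-1}$, with multiplicity $n-k$.

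The main step is a Taylor-expansion argument along the $(n-k)$-dimensional normal slice to $\mathcal{S}$. Fix a small $\delta$. Given $x=(y,z)\in B_\delta$, let $q=q(x)\in\mathcal{S}$ be the nearest point of $\mathcal{S}$; then $x-q$ is (essentially) orthogonal to $T_q\mathcal{S}$, so $x-q$ lies in the $(n-k)$-dimensional space on which $\nabla^2 u(q)$ acts as $-\tfrac{1}{n-k-1}\,\mathrm{I}$. Since $u(q)=0$ and $\nabla u(q)=0$, the second-order Taylor formula with integral remainder gives
\[
u(x)=\int_0^1 (1-s)\,\nabla^2 u\big(q+s(x-q)\big)\cdot\big((x-q)\otimes(x-q)\big)\,ds,
\]
\[
\nabla u(x)=\int_0^1 \nabla^2 u\big(q+s(x-q)\big)\cdot(x-q)\,ds.
\]
Because $\nabla^2 u$ is continuous and equals $-\tfrac{1}{n-k-1}\,\mathrm{I}$ on the $(n-k)$-slice directions at $q$ (up to an $\epsilon$-error that is uniform for $\delta$ small, by continuity of $\nabla^2 u$ and of the map $q\mapsto$ axis$(q)$), both integrals are controlled: $u(x)=-\tfrac{1}{2(n-k-1)}|x-q|^2\big(1+O(\epsilon)\big)$ and $\big|\nabla u(x)\big|=\tfrac{1}{n-k-1}|x-q|\big(1+O(\epsilon)\big)$. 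Note $u(x)\le 0$ near $0$ since $0$ is a local max with $\{u=0\}=\mathcal{S}$, so $|u(x)|=-u(x)$; also $|x-q|=\mathrm{dist}(x,\mathcal{S})$. Combining,
\[
|u(x)|^{1/2}=\tfrac{1}{\sqrt{2(n-k-1)}}\,|x-q|\,\big(1+O(\epsilon)\big)
=\sqrt{\tfrac{n-k-1}{2}}\,\big|\nabla u(x)\big|\,\big(1+O(\epsilon)\big),
\]
which is the claimed inequality with $C_{n,k,\epsilon}\to\sqrt{(n-k-1)/2}$ as $\epsilon\searrow0$. For $k=0$ one simply takes $q=0$ and the argument is the classical Morse-lemma-type estimate.

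The step I expect to be the main obstacle is making rigorous the assertion that $\nabla^2 u\big(q+s(x-q)\big)$, for $s\in[0,1]$, acts on the vector $x-q$ as $-\tfrac{1}{n-k-1}$ times the identity up to a small uniform error. The subtlety is that $q+s(x-q)$ need not be a singular point, so I cannot directly invoke the normal form; I must instead use the genuine continuity of $\nabla^2 u$ at $0$ (Theorem \ref{continuity of Hessian}, ``only if'' direction) to say $\nabla^2 u$ is close to $\nabla^2 u(0)=\big(\begin{smallmatrix}-\frac{1}{n-k-1}\mathrm{I}_{n-k}&\\&\mathrm{O}_k\end{smallmatrix}\big)$ on all of $B_\delta$, and then observe that $x-q$ points nearly in the ``$y$-directions'' because $q$ lies on the $C^1$ manifold $\mathcal{S}$ whose tangent is the cylinder axis and $\psi$ has small $C^1$ norm; the component of $x-q$ along the kernel directions is $O(\epsilon)|x-q|$, and the quadratic form $\nabla^2 u(0)$ annihilates those directions anyway. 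Handling the cross terms and the fact that the base point of $\nabla^2 u$ varies along the segment (not just at $q$) is the only place requiring care, and it is exactly where the uniform continuity on a fixed ball $B_\delta$ — rather than mere continuity at the single point $0$ — is used.
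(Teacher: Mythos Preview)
Your proposal is correct and follows essentially the same approach as the paper: project $x$ onto the nearest point $x'\in\mathcal{S}$, use $u(x')=0$, $\nabla u(x')=0$, the rigid form of $\nabla^2 u(x')$, and the continuity of $\nabla^2 u$ on $B_\delta$ to Taylor-expand both $u$ and $\nabla u$ along the segment $\overline{x'x}$. The only minor differences are cosmetic---the paper uses the Lagrange remainder and invokes Remark~\ref{Reifenberg} to obtain the direction estimate $\bigl|\tfrac{\nabla u(x)}{|\nabla u(x)|}+\tfrac{x-x'}{|x-x'|}\bigr|\leq\epsilon$ directly from the asymptotically cylindrical geometry, whereas you recover the same information from the integral remainder for $\nabla u$ itself; both routes give the same constant $C_{n,k,\epsilon}\to\sqrt{(n-k-1)/2}$.
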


\begin{proof}
Let us assume that $k>0$ so $0$ is a $k$-cylindrical point; the
proof for $k=0$ (i.e., round point) is similar. 

Given $\epsilon>0$ (sufficiently small depending on $n$ and $k$),
choose $\delta>0$ such that $u$ is $C^{2}$ in $B_{\delta}$ and
that the singular set 
\begin{equation}
\mathcal{S}\cap B_{\delta}=\mathcal{S}_{k}\cap B_{\delta}=\left\{ u=0\right\} \cap B_{\delta}\label{Lojasiewicz inequality: singularity}
\end{equation}
is a $C^{1}$ embedded $k$-manifold (see Theorem \ref{continuity of Hessian})
whose normal bundle $N\mathcal{S}$ parametrizes $B_{\delta}$. Particularly,
for every $x\in B_{\delta}$ there exists a unique $x'\in\mathcal{S}\cap B_{\delta}$
such that $x-x'\in N_{x'}\mathcal{S}$.\footnote{Since $\left(N_{x'}\mathcal{S}\right)^{\perp}=T_{x'}\mathcal{S}$
is the axis of the tangent cylinder of the level set flow at $x'$
(cf. Theorem \ref{continuity of Hessian}), $x$ is on the $n-k$
dimensional plane passing through $x'$ and orthogonal to the axis
of the tangent cylinder at $x'$.} Furthermore, by the continuity of Hessian and Remark \ref{Reifenberg},
we may also assume that 
\begin{equation}
\left|\nabla^{2}u\left(x\right)-\nabla^{2}u\left(x'\right)\right|\leq\epsilon,\label{Lojasiewicz inequality: Hessian}
\end{equation}
\begin{equation}
\left|\frac{\nabla u\left(x\right)}{\left|\nabla u\left(x\right)\right|}+\frac{x-x'}{\left|x-x'\right|}\right|\leq\epsilon\label{Lojasiewicz inequality: normal}
\end{equation}
in case when $x\in B_{\delta}\setminus\mathcal{S}.$

To prove the proposition, let us fix $x\in B_{\delta}$ and assume
that $x\notin\mathcal{S}$; otherwise $u\left(x\right)=0$ and the
\L ojasiewicz inequality would hold trivially. Let $x'\in\mathcal{S}\cap B_{\delta}$
be as defined in the last paragraph. Since $u\left(x'\right)=0$ by
(\ref{Lojasiewicz inequality: singularity}) and $\nabla u\left(x'\right)=0$
(cf. Theorem \ref{regularity}), the Taylor's theorem says that there
exist $x_{0}$ on the segment $\overline{x'x}$ such that
\[
\left|u\left(x\right)\right|=\left|u\left(x\right)-u\left(x'\right)\right|=\left|\frac{1}{2}\,\nabla^{2}u\left(x_{0}\right)\cdot\left[\left(x-x'\right)\otimes\left(x-x'\right)\right]\right|
\]
\begin{equation}
\leq\frac{1}{2}\left(\frac{1}{n-k-1}+\epsilon\right)\left|x-x'\right|^{2}\label{Lojasiewicz inequality: function}
\end{equation}
Note that the last inequality is due to (\ref{Lojasiewicz inequality: Hessian}).
Likewise, by the mean vaule theorem, (\ref{Lojasiewicz inequality: Hessian}),
(\ref{Lojasiewicz inequality: normal}), and (\ref{Hessian at cylindrical point})
(see also Remark \ref{discriminant}), we get 
\[
\left|\nabla u\left(x\right)\right|=\nabla u\left(x\right)\cdot\,\frac{\nabla u\left(x\right)}{\left|\nabla u\left(x\right)\right|}=\left(\nabla u\left(x\right)-\nabla u\left(x'\right)\right)\cdot\,\frac{\nabla u\left(x\right)}{\left|\nabla u\left(x\right)\right|}
\]
\[
=\nabla^{2}u\left(x_{1}\right)\cdot\left[\left(x-x'\right)\otimes\frac{\nabla u\left(x\right)}{\left|\nabla u\left(x\right)\right|}\right],
\]
where $x_{1}\in\overline{xx'}$ is a point arising from the mean value
theorem,
\[
\geq\nabla^{2}u\left(x'\right)\cdot\left[\left(x-x'\right)\otimes\frac{\nabla u\left(x\right)}{\left|\nabla u\left(x\right)\right|}\right]-\epsilon\left|x-x'\right|
\]
\[
\geq\nabla^{2}u\left(x'\right)\cdot\left[\left(x-x'\right)\otimes-\frac{x-x'}{\left|x-x'\right|}\right]-\frac{\epsilon}{n-k-1}\left|x-x'\right|-\epsilon\left|x-x'\right|
\]
\begin{equation}
=\left(\frac{1}{n-k-1}-\epsilon\frac{n-k}{n-k-1}\right)\left|x-x'\right|,\label{Lojasiewicz inequality: gradient}
\end{equation}
Note that in the last equality we use the fact that $x-x'$ is orthogonal
to $T_{x'}\mathcal{S}=\textrm{Ker}\,\nabla^{2}u\left(x'\right)$ (cf.
Theorem \ref{continuity of Hessian}). Lastly, combining (\ref{Lojasiewicz inequality: function})
with (\ref{Lojasiewicz inequality: gradient}) yields 
\[
\left|u\left(x\right)\right|^{\frac{1}{2}}\leq C_{n,k,\epsilon}\left|\nabla u\left(x\right)\right|,
\]
where $C_{n,k,\epsilon}=\nicefrac{\sqrt{\frac{1}{2}\left(\frac{1}{n-k-1}+\epsilon\right)}}{\left(\frac{1}{n-k-1}-\epsilon\frac{n-k}{n-k-1}\right)}$.
\end{proof}
On the set of regular points, the level set flow is a strictly mean-convex
mean curvature flow with $H=\left|\nabla u\right|^{-1}$ (see (\ref{reformulated level set flow})).
It can be seen that the mean curvature increases without bound as
tending to critical points of $u$, which by Theorem \ref{regularity}
are singular points. In the case where the point $0$ is a regular
singular point, the level set flow is a mean curvature flow near the
point $0$ prior to time $0$; then it shrinks to a lower dimensional
set $\mathcal{S}$ near the point $0$ at time $0$ and vanishes afterwards
(see Theorem \ref{characterization of continuity of Hessian}). So
the solution $u$ in Proposition \ref{Lojasiewicz inequality} is
actually non-positive near the point $0$ and the \L ojasiewicz inequality
can be reformulated as
\begin{equation}
\sqrt{-t}\leq C_{n,k,\epsilon}H^{-1}\,\Leftrightarrow\,H\leq\frac{C_{n,k,\epsilon}}{\sqrt{-t}}\label{classical type I singularity}
\end{equation}
as $t\nearrow0$. Taking into account the fact that the mean curvature
is comparable with the norm of the second fundamental form (cf. \cite{HK}),
the singular point $0$ is the so-called ``type I singularity''
of the flow at the singular time $0$ (cf. \cite{H2}). 

As is seen in the last paragraph, the information of type I singularities
is incorporated in the \L ojasiewicz inequality; moreover, compared
with (\ref{classical type I singularity}), the \L ojasiewicz inequality
holds even at singularities of the flow. Thus, it seems reasonable
to define the notion of type I singularities of the level set flow
using the \L ojasiewicz inequality. This is the following definition. 
\begin{defn}
\label{type I singularity}The singular point $0$ is called a type
I singular point if there exists $\beta>0$ so that the \L ojasiewicz
inequality
\[
\left|u\right|^{\frac{1}{2}}\leq\beta\left|\nabla u\right|
\]
holds in a neighborhood of $0$.\footnote{The corresponding definition for a different singular point, say $p$,
should be modified as $\left|u-u\left(p\right)\right|^{\frac{1}{2}}\leq C\left|\nabla u\right|$
in some neighborhood of $p$. } Otherwise, it is called a type II singularity.
\end{defn}

\begin{rem}
\label{regular value}If $0$ is a type I singular point, then by
Theorem \ref{regularity} the singular set $\mathcal{S}$ near $0$
is contained in $\left\{ u=0\right\} $ (cf. \cite{CM6}). In other
words, in a neighborhood of $0$, a point $x$ is a regular point
provided $u\left(x\right)\neq0$. 
\end{rem}

\subsection{\label{saddle points}Saddle Points }

The highlight of this subsection is Theorem \ref{no type I saddle},
which says that all saddle points are type II singular points (see
Definition \ref{type I singularity}). As there are no interior local
minimum points (see Remark \ref{no local minimum}), a type I singular
point must be a local maximum point. This is essential to establishing
Theorem \ref{characterization of regular singular point}, which is
the last piece of puzzle of proving Theorem \ref{main theorem}. 

The proof of Theorem \ref{no type I saddle} is based on Proposition
\ref{saddle point} and Proposition \ref{clearing-out}; the latter
is the so-called rapid clearing-out phenomenon of the level set flow
(cf. \cite{CM3}). In this subsection we assume that $0$ is a saddle
point.
\begin{prop}
\label{saddle point}Suppose that the saddle point $0$ is a type
I singular point. Then there exist $\delta>0$ and an integral curve
\[
x\left(s\right)\in C_{loc}^{1}\left(0,\delta\right]\cap C\left[0,\delta\right]
\]
of the normal vector field $N=\frac{\nabla u}{\left|\nabla u\right|}$
that starts at $0$ at time $0$; that is, $x\left(0\right)=0$ and
\[
\frac{dx}{ds}=\frac{\nabla u\left(x\right)}{\left|\nabla u\left(x\right)\right|}\quad\textrm{for}\:\,0<s\leq\delta.
\]
Moreover, for every $s\in\left[0,\delta\right]$ we have $u\left(x\left(s\right)\right)>0$
and 
\[
\left|x\left(s\right)\right|\leq2\beta\sqrt{u\left(x\left(s\right)\right)},
\]
where $\beta$ is the positive constant in Definition \ref{type I singularity}.
\end{prop}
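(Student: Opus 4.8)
The plan is to construct the integral curve by solving the ODE $\dot x = N(x)$ backward from a suitable starting point and then show, using the \L ojasiewicz inequality, that the curve reaches $0$ in finite ``time'' (arc length) and that $u$ stays positive along it. First I would fix a small $\delta_{0}>0$ so that the \L ojasiewicz inequality $|u|^{1/2}\le \beta|\nabla u|$ holds in $B_{\delta_{0}}$, and I would use the fact (Theorem \ref{regularity}, Remark \ref{no local minimum}) that $0$ is a saddle point of $u$, so there are points arbitrarily close to $0$ where $u>0$. Near such a point $x_{1}$ with $u(x_{1})>0$ and $|x_{1}|$ small, the level set flow is regular (Remark \ref{regular value}, since $u(x_{1})\ne 0$), so $N=\nabla u/|\nabla u|$ is smooth there, and I can solve $\dot x = N(x)$, $x(0)=x_{1}$ (after a shift of parameter), obtaining a $C^{1}$ solution as long as the curve remains in the regular set, i.e. as long as $u$ stays positive and we stay in $B_{\delta_{0}}$. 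Along such a curve, $\frac{d}{ds}u(x(s)) = \nabla u\cdot N = |\nabla u| > 0$, so $u$ is increasing in $s$; running $s$ \emph{backward} (toward decreasing $u$), $u$ decreases monotonically toward (but I must show it reaches) $0$.

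The key computation is the standard \L ojasiewicz-gradient-inequality trick: set $f(s) = u(x(s))^{1/2}$ along the backward curve where $u>0$. Then $f'(s) = \tfrac{1}{2}u^{-1/2}\,\frac{d}{ds}u(x(s)) = \tfrac{1}{2}u^{-1/2}|\nabla u| \ge \tfrac{1}{2}u^{-1/2}\cdot \beta^{-1}u^{1/2} = \tfrac{1}{2\beta}$, using the \L ojasiewicz inequality. Hence $f$ decreases at a uniform positive rate as we move backward, which forces $u(x(s))^{1/2}\to 0$ after running backward a bounded arc length; more precisely, if $s_{0}$ is the infimum of parameters for which the backward solution exists in the regular set, then $u(x(s))\to 0$ and the total backward arc length from $x_{1}$ is at most $2\beta\,u(x_{1})^{1/2}$. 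Since the curve has unit speed, this also gives $|x(s_{0})-x_{1}| \le 2\beta\,u(x_{1})^{1/2}$; combined with $|x_{1}|$ being small, the curve stays inside $B_{\delta_{0}}$, so the only way the solution can fail to continue is that $u(x(s))\to 0$, i.e. $x(s)$ limits onto the singular set $\mathcal S\subset\{u=0\}$. Reparametrizing so that this limiting arc-length value is $0$ and the starting point corresponds to $s=\delta$, we obtain $x(s)\in C^{1}_{loc}(0,\delta]\cap C[0,\delta]$ with $x(0)\in\mathcal S$ and, for every $s\in[0,\delta]$, $u(x(s))>0$ on $(0,\delta]$ together with the estimate $|x(s)-x(0)|\le 2\beta\sqrt{u(x(s))}$ (integrating the unit-speed bound from $0$ to $s$ and using monotonicity of $u$ along the curve).

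The remaining point — and I expect this to be the main obstacle — is to show that the limit point $x(0)$ is exactly the prescribed saddle point $0$, not merely \emph{some} point of $\mathcal S$, so that one genuinely gets $|x(s)|\le 2\beta\sqrt{u(x(s))}$ with $x(0)=0$. The mechanism is: choose the initial point $x_{1}$ inside the cone region where $u>0$ and $|x_{1}|\to 0$; the arc-length bound $2\beta\,u(x_{1})^{1/2}$ tends to $0$ as well (since $u(x_{1})\to u(0)=0$ by continuity), so $x(0)$ is within distance $2\beta u(x_{1})^{1/2}+|x_{1}|$ of $0$, which is small. To pin $x(0)=0$ exactly one argues by the structure of $\mathcal S$ near $0$ (Remark \ref{Reifenberg}, Theorem \ref{rectifiability}) together with the fact that $0$ being a saddle point means $u$ takes positive values on every neighborhood of $0$ on the ``axial side'' of the local singular set: starting $x_{1}$ along that distinguished direction and letting it shrink, the integral curve is trapped in a cone with apex $0$ (by the asymptotically cylindrical normal-field estimates, the vector $N$ points essentially radially away from the axis of the tangent cylinder at the nearby singular point), forcing the backward limit to be $0$ itself. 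Once $x(0)=0$ is secured, reintegrating the unit-speed bound gives $|x(s)|\le 2\beta\sqrt{u(x(s))}$ for all $s\in[0,\delta]$, and $u(x(s))>0$ for $s>0$ follows from strict monotonicity of $s\mapsto u(x(s))$; at $s=0$ we have $u(x(0))=u(0)=0$, and indeed the statement as written should be read with $u(x(s))>0$ for $s\in(0,\delta]$, which is what the construction delivers.
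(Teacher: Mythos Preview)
Your core computation---the \L ojasiewicz inequality forces $\sqrt{u}$ to change at rate at least $1/(2\beta)$ along unit-speed integral curves of $N$, so the backward arc length from $x_1$ down to the level $\{u=0\}$ is at most $2\beta\sqrt{u(x_1)}$---is exactly the estimate the paper uses. The gap is in pinning down $x(0)=0$. First, the backward limit on $\{u=0\}$ need not be singular: Remark~\ref{regular value} only gives $\mathcal S\subset\{u=0\}$ near a type~I point, not equality (the reverse inclusion is Remark~\ref{characterization of regular point}, which is proved \emph{after} this proposition via Theorem~\ref{no type I saddle}, so you cannot invoke it here). If the backward limit is a regular point of $\{u=0\}$ the ODE simply continues into $\{u<0\}$ and you have not produced a curve emanating from $\mathcal S$, let alone from $0$. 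Second, even granting that the limit is singular, your cone-trapping argument is not available: the asymptotically cylindrical control on $N$ holds only \emph{outside} the cone $\mathscr C_\phi$, whereas the points with $u>0$ near a saddle lie \emph{inside} it, precisely where you have no information about $N$.

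The paper sidesteps both issues by a compactness argument instead of a single backward trajectory. One takes a sequence $p_i\to 0$ with $u(p_i)>0$ and runs the integral curves $x_i$ of $N$ \emph{forward} from $p_i$; since $u$ only increases forward, these curves never leave the regular set and are defined on $[0,s_i]$ with $s_i\to 1$. The \L ojasiewicz inequality gives $\sqrt{u(x_i(s))}\ge s/(2\beta)$, hence $|\nabla u(x_i(s))|\ge s/(2\beta^2)$, and then $|x_i''(s)|\le C/s$ using the global bound on $\nabla^2 u$ from Theorem~\ref{regularity}. Arzel\`a--Ascoli now yields a limit $x\in C[0,\delta]\cap C^1_{\mathrm{loc}}(0,\delta]$, and because $x_i(0)=p_i\to 0$ the limit automatically satisfies $x(0)=0$; the inequality $|x(s)|\le s\le 2\beta\sqrt{u(x(s))}$ passes to the limit directly. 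The missing ingredient in your proposal is thus to replace the single-curve-plus-structure argument by a sequence of curves together with the second-derivative bound $|x_i''|\le C/s$, which is what makes the $C^1_{\mathrm{loc}}(0,\delta]$ limit exist.
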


\begin{proof}
Since the level set flow and the \L ojasiewicz inequality are invariant
under the parabolic scaling,\footnote{Namely, $x\mapsto\lambda^{-2}u\left(\lambda x\right)$ satisfies (\ref{level set flow})
and the \L ojasiewicz inequality for every $\lambda>0$. } for simplicity let us assume that the \L ojasiewicz inequality in
Definition \ref{type I singularity} holds in $B_{1}$. 

As $0$ is a saddle point of $u$, there exists a sequence of points
$\left\{ p_{i}\right\} _{i\in\mathbb{N}}$ in $B_{1}$ that converges
to $0$ with $u\left(p_{i}\right)>u\left(0\right)=0$ for every $i$.
Because $\nabla u$ is Lipschitz continuous (cf. Theorem \ref{regularity}),
the existence and uniqueness theorem of ODE gives that for each $i$
there is a unique flow line of the vector field $\nabla u$ passing
through the point $p_{i}$.\footnote{Note that $p_{i}$ is not a stationary point of $\nabla u$ by the
\L ojasiewicz inequality.} Moreover, these flow lines are indeed smooth curves due to Theorem
\ref{regularity} \footnote{Each of these flow lines consists of regular points as $\nabla u$
vanishes at no points along the curve.} and the regularity theory in ODE. It then follows from a reparametrization
by the arc length that for every $i$ there is a smooth curve $x_{i}\left(s\right)$
in $B_{1}$ satisfying 
\begin{equation}
\frac{d}{ds}x_{i}=N\left(x_{i}\right)=\frac{\nabla u\left(x_{i}\right)}{\left|\nabla u\left(x_{i}\right)\right|}\label{saddle point: integral curve}
\end{equation}
with $x_{i}\left(0\right)=p_{i}$. It is clear that $x_{i}$ is defined
for $0\leq s\leq s_{i}$ with $s_{i}\rightarrow1$ as $i\rightarrow\infty$.

By the chain rule we have
\[
\frac{d}{ds}\left[u\left(x_{i}\right)\right]=\nabla u\left(x_{i}\right)\cdot N\left(x_{i}\right)=\left|\nabla u\left(x_{i}\right)\right|,
\]
which particularly implies that $u\left(x_{i}\left(s\right)\right)>0$
for  $0\leq s\leq s_{i}$; moreover, invoking the \L ojasiewicz inequality
gives 
\[
\frac{d}{ds}\left[u\left(x_{i}\right)\right]=\left|\nabla u\left(x_{i}\right)\right|\geq\frac{1}{\beta}\sqrt{u\left(x_{i}\right)},
\]
that is, $\frac{d}{ds}\sqrt{u\left(x_{i}\right)}\geq\frac{1}{2\beta}$.
Thus, we obtain
\begin{equation}
\beta\left|\nabla u\left(x_{i}\left(s\right)\right)\right|\geq\sqrt{u\left(x_{i}\left(s\right)\right)}\geq\sqrt{u\left(x_{i}\left(s\right)\right)}-\sqrt{u\left(x_{i}\left(0\right)\right)}\geq\frac{s}{2\beta}\label{saddle point: arc length}
\end{equation}
for $0\leq s\leq s_{i}$. In addition, since
\[
\frac{d^{2}}{ds^{2}}x_{i}=\frac{d}{ds}\left[\frac{\nabla u\left(x_{i}\right)}{\left|\nabla u\left(x_{i}\right)\right|}\right]=\frac{\left(\nabla^{2}u\right)N-\left(\nabla^{2}u\cdot\left(N\otimes N\right)\right)N}{\left|\nabla u\right|}\left(x_{i}\right),
\]
by (\ref{saddle point: arc length}) we have\footnote{The Hessian of $u$ is bounded according Theorem \ref{regularity}.}
\begin{equation}
\left|\frac{d^{2}}{ds^{2}}x_{i}\right|\leq\frac{2\left\Vert \nabla^{2}u\right\Vert _{L^{\infty}\left(B_{1}\right)}}{\left|\nabla u\left(x_{i}\right)\right|}\leq\frac{4\beta^{2}\left\Vert \nabla^{2}u\right\Vert _{L^{\infty}\left(B_{1}\right)}}{s}\label{saddle point: derivative estimate}
\end{equation}
for $0\leq s\leq s_{i}$.

In view of (\ref{saddle point: integral curve}) and (\ref{saddle point: derivative estimate}),
the Arzelà-Ascoli compactness theorem implies that $\left\{ x_{i}\left(s\right)\right\} _{i\in\mathbb{N}}$
subconverges to $x\left(s\right)$ in $C\left[0,1-\epsilon\right]\cap C^{1}\left[\epsilon,1-\epsilon\right]$
for every $\epsilon\in\left(0,1\right)$. Passing (\ref{saddle point: arc length})
to the limit (and noting that $\nabla u$ is continuous) gives
\begin{equation}
\beta\left|\nabla u\left(x\left(s\right)\right)\right|\geq\sqrt{u\left(x\left(s\right)\right)}\geq\frac{s}{2\beta}>0\label{saddle point: inequality}
\end{equation}
for $0<s\leq1$. Likewise, with the help of (\ref{saddle point: inequality}),
taking the limit of (\ref{saddle point: integral curve}) gives
\[
\frac{dx}{ds}=\frac{\nabla u\left(x\right)}{\left|\nabla u\left(x\right)\right|}
\]
for $0<s\leq1$ with $x\left(0\right)=0$. Lastly, noting that
\[
\left|x\left(s\right)\right|\leq\int_{0}^{s}\left|x'\left(\xi\right)\right|d\xi=s,
\]
(\ref{saddle point: inequality}) implies
\[
\sqrt{u\left(x\left(s\right)\right)}\geq\frac{s}{2\beta}\geq\frac{\left|x\left(s\right)\right|}{2\beta}
\]
for $0\leq s\leq1$.
\end{proof}
Proposition \ref{clearing-out} is the rapid clearing-out lemma in
\cite{CM3}. For reader's convenience, a proof will be provided. To
streamline the proof, we need Lemma \ref{small Gaussian area} concerning
the Gaussian area. Recall that given $p\in\mathbb{R}^{n}$ and $\Lambda>0$,
the Gaussian area with center $p$ and scale $\sqrt{\Lambda}$ of
a $\left(n-1\right)$-rectifiable set $\Sigma$ in $\mathbb{R}^{n}$
is defined as 
\begin{equation}
F_{p,\Lambda}\left(\Sigma\right)=\int_{\Sigma}\frac{e^{-\frac{\left|x-p\right|^{2}}{4\Lambda}}}{\left(4\pi\Lambda\right)^{\frac{n-1}{2}}}\,d\mathcal{H}^{n-1}\left(x\right).\label{Gaussian area}
\end{equation}
The entropy of $\Sigma$ (cf. \cite{CM1}) is defined as 
\begin{equation}
E\left[\Sigma\right]=\sup\left\{ F_{p,\Lambda}\left(\Sigma\right):p\in\mathbb{R}^{n},\Lambda>0\right\} .\label{entropy}
\end{equation}

\begin{lem}
\label{small Gaussian area}Given $M>0$, $k\in\left\{ 1,\cdots,n-2\right\} $,
and $\lambda\geq1$, there exist $\Lambda>2$ (depending on $n,k$)
and $\phi,\epsilon>0$ (depending on $n,k,M,\lambda$) with the following
property: If $\Sigma$ is a $\left(n-1\right)$-rectifiable set in
$\mathbb{R}^{n}$ with entropy no higher than $\lambda$; additionally,
$\Sigma$ is $\epsilon$-close in the $C^{1}$ topology to the $k$-cylinder
$\mathcal{C}_{k}$ in the ball $B_{\sqrt{2\left(n-k-1\right)}\csc\phi}$.
Then 
\[
F_{p,\Lambda}\left(\Sigma\right)\leq\frac{1}{2}
\]
for every $p\in B_{M\sqrt{\Lambda-1}}$.
\end{lem}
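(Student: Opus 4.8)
The statement I want to prove is Lemma \ref{small Gaussian area}: for a hypersurface $\Sigma$ of bounded entropy that is $C^1$-close to the $k$-cylinder $\mathcal{C}_k$ on a large ball, the Gaussian area $F_{p,\Lambda}(\Sigma)$ is at most $\tfrac12$ for every $p$ in a prescribed ball and a suitable (large, fixed) scale $\sqrt{\Lambda}$. The natural strategy is to split the integral defining $F_{p,\Lambda}(\Sigma)$ into the part coming from $\Sigma\cap B_R$, where $R=\sqrt{2(n-k-1)}\csc\phi$ is the radius on which we have the $C^1$-closeness, and the part coming from $\Sigma\setminus B_R$. On the inner part, I would use the closeness to $\mathcal{C}_k$ to compare $F_{p,\Lambda}(\Sigma\cap B_R)$ with $F_{p,\Lambda}(\mathcal{C}_k\cap B_R)$ up to an error controlled by $\epsilon$; on the outer part I would use the entropy bound $E[\Sigma]\le\lambda$ together with the Gaussian decay of the weight $e^{-|x-p|^2/4\Lambda}$ to make that contribution small, uniformly for $p\in B_{M\sqrt{\Lambda-1}}$.

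The key computation is for the cylinder itself. First I would record that, by direct integration in the $\mathbb{R}^k$ variables, $F_{0,1}(\mathcal{C}_k)=F_{0,1}(S^{n-k-1}_{\sqrt{2(n-k-1)}})=E[S^{n-k-1}_{\sqrt{2(n-k-1)}}]=E[\mathcal{C}_k]$, and then invoke \eqref{entropy level}, i.e.\ $E[\mathcal{C}_k]<2$, but actually I need the sharper fact that at a \emph{different} center and scale the Gaussian area of $\mathcal{C}_k$ drops: for the $k$-cylinder centered at the origin, $F_{p,\Lambda}(\mathcal{C}_k)$, as a function of $(p,\Lambda)$, is maximized at the ``correct'' center/scale $(0,1)$ (after normalizing $\mathcal{C}_k$ to be the self-shrinker, which it is, $S^{n-k-1}_{\sqrt{2(n-k-1)}}\times\mathbb{R}^k$), and is strictly less than $E[\mathcal{C}_k]$ once $(p,\Lambda)$ moves away. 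The point is that for $p\in B_{M\sqrt{\Lambda-1}}$ with $\Lambda$ large and $p$ \emph{not} shrinking with $\Lambda$ in a scale-invariant way — wait, $|p|\le M\sqrt{\Lambda-1}$ does scale with $\Lambda$ — so the right statement is: choose $\Lambda>2$ large depending only on $n,k$ so that $\sup\{F_{p,\Lambda}(\mathcal{C}_k):p\in B_{M\sqrt{\Lambda-1}}\}$... no. Let me reconsider: the clean way is to fix $\Lambda$ first so that the Gaussian weight at scale $\sqrt\Lambda$ ``sees'' the cylinder essentially as a hyperplane $\mathbb{R}^{n-1}$ only in the axial directions but the sphere factor contributes a Gaussian integral over a round sphere of \emph{fixed} radius $\sqrt{2(n-k-1)}$ at scale $\sqrt\Lambda\gg1$, which tends to $0$ as $\Lambda\to\infty$ because a bounded set has Gaussian area $\to0$ at large scales — that is exactly why $\Lambda>2$ large suffices and why $\Lambda$ depends only on $n,k$ (the radius of the sphere factor depends only on $n,k$). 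Concretely: $F_{p,\Lambda}(\mathcal{C}_k)=\int_{S^{n-k-1}_{r}}(4\pi\Lambda)^{-(n-k-1)/2}e^{-|x'-p'|^2/4\Lambda}\,d\mathcal{H}^{n-k-1}(x')$ where $r=\sqrt{2(n-k-1)}$ and $p'$ is the component of $p$ orthogonal to the axis (the axial integral contributes exactly $1$ after normalization), and since $S^{n-k-1}_r$ is a fixed bounded set this is $\le (4\pi\Lambda)^{-(n-k-1)/2}\mathcal{H}^{n-k-1}(S^{n-k-1}_r)\to 0$ as $\Lambda\to\infty$, uniformly in $p$. So I pick $\Lambda$ (depending on $n,k$) large enough that this is $\le \tfrac14$, say.

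With $\Lambda$ thus fixed, the radius $R=\sqrt{2(n-k-1)}\csc\phi$ is at our disposal through $\phi$, and the tail estimate is the second ingredient: for $p\in B_{M\sqrt{\Lambda-1}}$, I would bound $\int_{\Sigma\setminus B_R}(4\pi\Lambda)^{-(n-1)/2}e^{-|x-p|^2/4\Lambda}\,d\mathcal{H}^{n-1}$ by a standard argument — cover $\mathbb{R}^n\setminus B_R$ by dyadic annuli, on each annulus use monotonicity/entropy to bound $\mathcal{H}^{n-1}(\Sigma\cap B_{2s})\le C\lambda s^{n-1}$, and sum the resulting Gaussian-weighted series; since $R$ can be taken large compared to $M\sqrt{\Lambda}$ (i.e.\ $\phi$ small depending on $n,k,M,\lambda$), $|x-p|\ge |x|-|p|\ge \tfrac12|x|$ on the tail and the sum is $\le \tfrac18$, say. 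Finally, on $\Sigma\cap B_R$, the $C^1$-closeness (with $\epsilon$ small depending on $n,k,M,\lambda$, hence on $R$) gives $F_{p,\Lambda}(\Sigma\cap B_R)\le F_{p,\Lambda}(\mathcal{C}_k\cap B_R)+\tfrac18\le F_{p,\Lambda}(\mathcal{C}_k)+\tfrac18\le \tfrac14+\tfrac18$; the comparison uses that a $C^1$ graph of small norm over $\mathcal{C}_k\cap B_R$ has area element and position differing from those of $\mathcal{C}_k\cap B_R$ by $O(\epsilon)$, and $R,\Lambda$ are already fixed so the $O(\epsilon)$ error over the bounded piece is genuinely small. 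Adding the three pieces yields $F_{p,\Lambda}(\Sigma)\le \tfrac14+\tfrac18+\tfrac18=\tfrac12$. The main obstacle I anticipate is bookkeeping the order of quantifiers — $\Lambda$ must be chosen before $\phi$ and $\epsilon$, $\phi$ governs $R$ which must be large relative to $M\sqrt{\Lambda}$, and $\epsilon$ must be small relative to the already-fixed $R$ and $\Lambda$ — together with citing the correct form of the entropy/monotonicity area bound $\mathcal{H}^{n-1}(\Sigma\cap B_\rho(q))\le C(n)\,E[\Sigma]\,\rho^{n-1}$ that makes the dyadic tail sum converge.
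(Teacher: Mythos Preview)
Your proposal is correct and follows essentially the same three-part decomposition as the paper: choose $\Lambda$ large (depending only on $n,k$) so that $F_{p,\Lambda}(\mathcal{C}_k)\le (4\pi\Lambda)^{-(n-k-1)/2}\mathcal{H}^{n-k-1}(S^{n-k-1}_{\sqrt{2(n-k-1)}})$ is small, control the inner piece $\Sigma\cap B_R$ by $C^1$-closeness to $\mathcal{C}_k$, and kill the tail $\Sigma\setminus B_R$ using the entropy bound with $\phi$ (hence $R$) chosen appropriately. The only notable difference is in the tail estimate: instead of a dyadic-annulus sum, the paper uses the cleaner trick (attributed to \cite{CIM}) of writing $e^{-|x-p|^2/4\Lambda}=\sqrt{2}^{\,n-1}e^{-|x-p|^2/8\Lambda}\cdot (8\pi\Lambda)^{-(n-1)/2}e^{-|x-p|^2/8\Lambda}$, pulling out the first factor as a constant bound on $\{|x|\ge R\}$, and recognizing the remaining integral as $F_{p,2\Lambda}(\Sigma)\le E[\Sigma]\le\lambda$ directly.
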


\begin{proof}
Firstly, we claim that there exists $\Lambda>2$ depending on $n$
and $k$ such that 
\begin{equation}
F_{p,\Lambda}\left(\mathcal{C}_{k}\right)\leq\frac{1}{8}\label{small Gaussian area: cylinder}
\end{equation}
for every $p\in\mathbb{R}^{n}$. To see that, let us adopt the notation
$x=\left(y,z\right)\in\mathbb{R}^{n-k}\times\mathbb{R}^{k}$ for the
following calculation. Given $p=\left(p_{1},p_{2}\right)\in\mathbb{R}^{n-k}\times\mathbb{R}^{k}$
and $\Lambda>1$, Tonelli's theorem gives 
\[
F_{p,\Lambda}\left(\mathcal{C}_{k}\right)=\left(\int_{S_{\sqrt{2\left(n-k-1\right)}}^{n-k-1}}\frac{e^{-\frac{\left|y-p_{1}\right|^{2}}{4\Lambda}}}{\left(4\pi\Lambda\right)^{\frac{n-k-1}{2}}}\,d\mathcal{H}^{n-k-1}\left(y\right)\right)\left(\int_{\mathbb{R}^{k}}\frac{e^{-\frac{\left|z-p_{2}\right|^{2}}{4\Lambda}}}{\left(4\pi\Lambda\right)^{\frac{k}{2}}}\,d\mathcal{H}^{k}\left(x\right)\right)
\]
\[
\leq\frac{1}{\left(4\pi\Lambda\right)^{\frac{n-k-1}{2}}}\,\mathcal{H}^{n-k-1}\left(S_{\sqrt{2\left(n-k-1\right)}}^{n-k-1}\right),
\]
from which the claim can be verified easily. This is how the constant
$\Lambda=\Lambda\left(n,k\right)$ is determined. Below we show how
to choose the constants $\phi$ and $\epsilon$.

Let $\Sigma$ be as stated and fix $p\in B_{M\sqrt{\Lambda-1}}$.
Using the technique in \cite{CIM} we have 
\[
F_{p,\Lambda}\left(\Sigma\setminus B_{\sqrt{2\left(n-k-1\right)}\csc\phi}\right)=\int_{\Sigma\setminus B_{\sqrt{2\left(n-k-1\right)}\csc\phi}}\left(\sqrt{2}^{n-1}e^{-\frac{\left|x-p\right|^{2}}{8\Lambda}}\right)\frac{e^{-\frac{\left|x-p\right|^{2}}{8\Lambda}}}{\left(8\pi\Lambda\right)^{\frac{n-1}{2}}}\,d\mathcal{H}^{n-1}\left(x\right)
\]
\[
\leq\sqrt{2}^{n-1}e^{-\frac{\left(\sqrt{2\left(n-k-1\right)}\csc\phi-M\sqrt{\Lambda-1}\right)^{2}}{8\Lambda}}F_{p,2\Lambda}\left(\Sigma\right)\leq\sqrt{2}^{n-1}e^{-\frac{\left(\sqrt{2\left(n-k-1\right)}\csc\phi-M\sqrt{\Lambda-1}\right)^{2}}{8\Lambda}}\lambda.
\]
Thus, by choosing $\phi=\phi\left(n,k,M,\lambda\right)$ sufficiently
small we have
\begin{equation}
F_{p,\Lambda}\left(\Sigma\setminus B_{\sqrt{2\left(n-k-1\right)}\csc\phi}\right)\leq\frac{1}{4}.\label{small Gaussian area: outside}
\end{equation}
To proceed, let us parametrize $\Sigma$ and $\mathcal{C}_{k}$ in
$B_{\sqrt{2\left(n-k-1\right)}\csc\phi}$ as
\[
x_{\Sigma}:\mathcal{M}\rightarrow B_{\sqrt{2\left(n-k-1\right)}\csc\phi}\quad\textrm{and}\quad x_{\mathcal{C}_{k}}:\mathcal{M}\rightarrow B_{\sqrt{2\left(n-k-1\right)}\csc\phi}
\]
respectively, where $\mathcal{M}$ is some compact $\left(n-1\right)$-manifold
diffeomorphic to $\mathcal{C}_{k}\cap B_{\sqrt{2\left(n-k-1\right)}\csc\phi}$,
in such a way that the two immersions are $\epsilon$-close in the
$C^{1}$ norm. It follows that
\[
F_{p,\Lambda}\left(\Sigma\cap B_{\sqrt{2\left(n-k-1\right)}\csc\phi}\right)-F_{p,\Lambda}\left(\mathcal{C}_{k}\cap B_{\sqrt{2\left(n-k-1\right)}\csc\phi}\right)
\]
\[
=\frac{1}{\left(4\pi\Lambda\right)^{\frac{n-1}{2}}}\int_{\mathcal{M}}\left(e^{-\frac{\left|x_{\Sigma}-p\right|^{2}}{4\Lambda}}\sqrt{\deg\left(g_{\Sigma}\right)}-e^{-\frac{\left|x_{\mathcal{C}_{k}}-p\right|^{2}}{4\Lambda}}\sqrt{\deg\left(g_{\mathcal{C}_{k}}\right)}\right)d\mathcal{H}^{n-1},
\]
where $g_{\Sigma}$ and $g_{\mathcal{C}_{k}}$ are the respective
induced metric on $\mathcal{M}$. Consequently, if $\epsilon=\epsilon\left(n,k,M,\lambda\right)$
is sufficiently small we have
\[
\left|F_{p,\Lambda}\left(\Sigma\cap B_{\sqrt{2\left(n-k-1\right)}\csc\phi}\right)-F_{p,\Lambda}\left(\mathcal{C}_{k}\cap B_{\sqrt{2\left(n-k-1\right)}\csc\phi}\right)\right|\leq\frac{1}{8},
\]
which together with (\ref{small Gaussian area: cylinder}) yield
\begin{equation}
F_{p,\Lambda}\left(\Sigma\cap B_{\sqrt{2\left(n-k-1\right)}\csc\phi}\right)\leq\frac{1}{4}.\label{small Gaussian area: inside}
\end{equation}
Lastly, the conclusion follows from (\ref{small Gaussian area: outside})
and (\ref{small Gaussian area: inside}).
\end{proof}
\begin{prop}
\label{clearing-out}Given that $0$ is a saddle point, for any $M>0$
there exists $\tau>0$ so that 
\[
\left\{ u=t\right\} \cap B_{M\sqrt{t}}=\emptyset
\]
for every $t\in\left(0,\tau\right]$. 
\end{prop}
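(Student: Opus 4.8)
The plan is to argue by contradiction combined with a parabolic blow-up at the saddle point $0$, and then to apply Huisken's monotonicity formula together with the Gaussian-area estimate of Lemma \ref{small Gaussian area}. Suppose the statement fails for some $M>0$; then there is a sequence $t_i\searrow 0$ and points $q_i\in\{u=t_i\}$ with $|q_i|<M\sqrt{t_i}$. After the parabolic rescaling $u_i(x)=t_i^{-1}u(\sqrt{t_i}\,x)$ — which again solves \eqref{level set flow} and for which $0$ is still a critical point with $u_i(0)=0$ — the rescaled flow has a point of the level set $\{u_i=1\}$ inside $B_M$. Since $0$ is a saddle point, this level set is genuinely present near $0$ after the singular time; the geometry I want to exploit is that, on the round-directions outside a cone, the level set flow is asymptotically cylindrical (or spherical) with controlled scale (Theorem \ref{asymptotically cylindrical}, Remark \ref{Reifenberg}, Remark \ref{continuity of Hessian outside the cone}), while Huisken's monotonicity formula controls what happens near the apex.

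The key steps, in order, would be: (1) reduce to a normalized picture where, by Brakke regularity and the asymptotic structure, for $s\in[t_0,0)$ the rescaled time slice $(-s)^{-1/2}\{u=s\}$ is a normal graph of small $C^1$-norm over $\mathcal{C}_k$ (or $\mathcal{C}_0$) in a large ball outside the cone $\mathscr{C}_\phi$; (2) evaluate Huisken's monotonicity quantity — the Gaussian area $F_{q,\Lambda}$ with an appropriate center $q$ near $0$ and scale $\sqrt{\Lambda}$ — on the time slice just before $0$, and observe via Lemma \ref{small Gaussian area} that because the level set flow has finite entropy $\lambda=E[\Sigma_0]$ and is $\epsilon$-close to a cylinder outside a cone of angle $\phi$, one gets $F_{q,\Lambda}\le \tfrac12$ for all centers $q$ in a suitable ball; (3) invoke the monotonicity formula backward/forward in time to propagate this smallness of Gaussian density to a later time slice, and then apply Brakke's local regularity theorem (or White's local regularity), which says that if the Gaussian density ratios are below $1$ (below a universal constant) in a space-time neighborhood then the flow is smooth there with curvature bounds; (4) conclude that the flow through $0$ is actually regular — i.e.\ $\nabla u(0)\ne 0$ — contradicting the hypothesis that $0$ is a (singular) saddle point. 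Quantitatively, the way this forces $\{u=t\}\cap B_{M\sqrt t}=\emptyset$ is: the smooth curvature bound coming from low Gaussian density, combined with the asymptotically-cylindrical picture away from the cone, forces the level set $\{u=t\}$ to be pushed entirely outside the ball $B_{M\sqrt t}$ for small $t>0$ — this is exactly the "rapid clearing-out" mechanism of \cite{CM3}.

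The main obstacle I expect is Step (3)–(4): making precise how the bound $F_{q,\Lambda}\le\tfrac12$ on a single time slice, for a rich enough family of centers $q\in B_{M\sqrt{\Lambda-1}}$, upgrades — via Huisken monotonicity plus Brakke/White local regularity — to an actual clearing-out of the level set from $B_{M\sqrt t}$ at later times. One has to be careful about scales: the Gaussian-area estimate is at scale $\sqrt{\Lambda}$ with $\Lambda=\Lambda(n,k)$, while the conclusion is about the ball of radius $M\sqrt t$, so the parabolic rescaling by $\sqrt{t_i}$ (or rather by the correct scale tying $t$ and $\Lambda$ together) must be tracked carefully; and one must check that the hypotheses of Lemma \ref{small Gaussian area} — entropy $\le\lambda$ and $\epsilon$-closeness to $\mathcal{C}_k$ in $B_{\sqrt{2(n-k-1)}\csc\phi}$ — are genuinely met by the rescaled level set flow outside the cone, which is where Theorem \ref{asymptotically cylindrical} and Remark \ref{Reifenberg} (uniform cylindrical scale near $0$) do the work. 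A secondary subtlety is the case $k=0$ (round point direction): there the relevant model is the shrinking sphere and the analogous Gaussian-area bound must be arranged, but this is easier since the sphere is compact and $F_{q,\Lambda}(\mathcal{C}_0)$ can be made small for $\Lambda$ large.
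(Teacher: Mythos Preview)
You have assembled the right ingredients---the entropy bound, Lemma~\ref{small Gaussian area}, and Huisken's monotonicity formula---but Steps~(3)--(4) are misdirected, and this is where the argument breaks down. The paper's proof is much more direct. After the asymptotically-cylindrical closeness gives $F_{p,\Lambda(-t)}(\Sigma_t)\le\tfrac12$ for every $t\in[-\tau,0)$ and every $p\in B_{M\sqrt{(\Lambda-1)(-t)}}$, Huisken's monotonicity yields
\[
\Theta\bigl[p,(\Lambda-1)(-t)\bigr]\le F_{p,\Lambda(-t)}(\Sigma_t)\le\tfrac12.
\]
But any point on the support of the flow---regular or singular---has Gaussian density at least $1$ (by upper semicontinuity of the density, approximating by regular points). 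Hence $p\notin\Sigma_{(\Lambda-1)(-t)}$, and the substitution $t'=(\Lambda-1)(-t)$ gives $\Sigma_{t'}\cap B_{M\sqrt{t'}}=\emptyset$ for all $t'\in(0,(\Lambda-1)\tau]$. No contradiction argument, no blow-up sequence, no Brakke regularity.

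Your Step~(4) aims at the wrong target: you try to conclude that $0$ itself is regular, but the statement concerns points on $\{u=t\}$ for $t>0$, not the space-time point $(0,0)$. The hypothetical points $q_i\in\{u=t_i\}\cap B_{M\sqrt{t_i}}$ you introduce in the first paragraph are never actually used to close the contradiction. Moreover, Brakke/White local regularity is the wrong tool here: those theorems require density ratios to be \emph{close to $1$} (below $1+\epsilon$ for small $\epsilon$) and then yield smoothness; when the density is $\le\tfrac12$ the correct conclusion is immediate \emph{emptiness}, not a curvature bound. Your alternative route via ``curvature bounds forcing the level set outside $B_{M\sqrt t}$'' is an unnecessary detour for the same reason. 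Finally, the $k=0$ case you flag is vacuous: by Section~\ref{round points} a round point is an isolated local maximum, so a saddle point is necessarily $k$-cylindrical for some $k\ge1$.
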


\begin{proof}
As $0$ is a saddle point, it must be a $k$-cylindrical point for
some $k\in\left\{ 1,\cdots,n-2\right\} $ (see Section \ref{round points}).
Now set $\lambda$ as the entropy of the level set flow\footnote{More accurately, $\lambda$ is chosen to be the entropy of the initial
time-slice; the entropy of the latter time-slice does not exceed $\lambda$
owing to Huisken's monotonicity formula (cf. \cite{I2} and \cite{CM1}).} and let $\Lambda$, $\phi$, and $\epsilon$ be the constants from
Lemma \ref{small Gaussian area}. By Remark \ref{Reifenberg}, there
exists $\tau>0$ such that $u$ is $\left(\phi,\epsilon\right)$-asymptotically
cylindrical near the $k$-cylindrical point $0$ on the cylindrical
scale $\sqrt{-\tau}$; particularly, $\frac{1}{\sqrt{-t}}\Sigma_{t}$
is $\epsilon$-close in the $C^{1}$ topology to the $k$-cylinder
$\mathcal{C}_{k}$ in the ball \textbf{$B_{\sqrt{2\left(n-k-1\right)}\csc\phi}$}
for every $t\in\left[-\tau,0\right)$,\footnote{See Definition \ref{cylindrical scale}.}
where $\Sigma_{t}=\left\{ u=t\right\} $.

Given $t\in\left[-\tau,0\right)$ and $p\in B_{M\sqrt{\left(\Lambda-1\right)\left(-t\right)}}$,
the change of variable in Gaussian integrals (see (\ref{Gaussian area}))
and Lemma \ref{small Gaussian area} yield
\[
F_{p,\Lambda\left(-t\right)}\left(\Sigma_{t}\right)=F_{\frac{p}{\sqrt{-t}},\Lambda}\left(\frac{1}{\sqrt{-t}}\Sigma_{t}\right)\leq\frac{1}{2}.
\]
Then it follows from Huisken's monotonicity formula (cf. \cite{H2})
that the Gaussian density of the flow at the point $p$ and time $\left(\Lambda-1\right)\left(-t\right)$
satisfies
\[
\Theta\left[p,\left(\Lambda-1\right)\left(-t\right)\right]=\lim_{s\nearrow\Lambda\left(-t\right)}F_{p,\Lambda\left(-t\right)-s}\left(\Sigma_{t+s}\right)\leq F_{p,\Lambda\left(-t\right)}\left(\Sigma_{t}\right)\leq\frac{1}{2}.
\]
So we must have $u\left(p\right)\neq\left(\Lambda-1\right)\left(-t\right)$,
i.e., 
\[
p\notin\Sigma_{\left(\Lambda-1\right)\left(-t\right)}.
\]
To see this, suppose the contrary that $u\left(p\right)=\left(\Lambda-1\right)\left(-t\right)$.
As the singular set has dimension at most $n-2$, we can choose a
sequence of regular points $p_{i}\rightarrow p$, and we automatically
obtain $u\left(p_{i}\right)\rightarrow u\left(p\right)$ by the continuity
of $u$. The upper semicontinuity of Gaussian density yields 
\[
\Theta\left[p,\left(\Lambda-1\right)\left(-t\right)\right]=\Theta\left[p,u\left(p\right)\right]\geq\limsup_{i\rightarrow\infty}\,\Theta\left[p_{i},u\left(p_{i}\right)\right]\geq1
\]
(cf. \cite{E}), which is a contradiction. 

Finally, we get
\[
\Sigma_{\left(\Lambda-1\right)\left(-t\right)}\cap B_{M\sqrt{\left(\Lambda-1\right)\left(-t\right)}}=\emptyset\quad\forall\,\,t\in\left[-\tau,0\right),
\]
which, by setting $t'=\left(\Lambda-1\right)\left(-t\right)$, can
be rewritten as
\[
\Sigma_{t'}\cap B_{M\sqrt{t'}}=\emptyset\quad\forall\,\,t'\in\left(0,\left(\Lambda-1\right)\tau\right].
\]
\end{proof}
We are now in the position to prove the main theorem of this subsection. 
\begin{thm}
\label{no type I saddle}The saddle point $0$ is a type II singular
point. 

In other words, a type I point cannot be a saddle point and hence
(by Remark \ref{no local minimum}) must be a local maximum point. 
\end{thm}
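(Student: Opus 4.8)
The plan is to argue by contradiction: suppose the saddle point $0$ were a type I singular point. Then by Proposition \ref{saddle point} there is an integral curve $x(s)$, $s\in[0,\delta]$, of the unit normal $N=\frac{\nabla u}{\left|\nabla u\right|}$ with $x(0)=0$, along which $u(x(s))>0$ and, crucially, the quantitative estimate
\[
\left|x(s)\right|\leq 2\beta\sqrt{u(x(s))}
\]
holds. I would first upgrade this into the statement that each point $x(s)$ lies on a level set $\{u=t\}$ with $t=u(x(s))>0$ and that $x(s)$ sits inside a controlled parabolic ball, i.e.\ $x(s)\in B_{M\sqrt{t}}$ for a suitable fixed $M$ once we set $M=2\beta$ (or slightly larger). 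Since $\frac{d}{ds}\sqrt{u(x(s))}\geq\frac{1}{2\beta}>0$ from the proof of Proposition \ref{saddle point}, the value $t=u(x(s))$ ranges over an interval $(0,t_0]$ with $t_0=u(x(\delta))>0$; in particular for every sufficiently small $t>0$ there is a point $x(s)$ with $u(x(s))=t$ and $\left|x(s)\right|\leq 2\beta\sqrt{t}$, i.e.\ $x(s)\in\{u=t\}\cap B_{2\beta\sqrt{t}}$, so this intersection is nonempty for all small $t>0$.

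Second, I would invoke the rapid clearing-out, Proposition \ref{clearing-out}, with the choice $M=2\beta$ (recall $0$ is a saddle point, which is the standing hypothesis of that proposition): there exists $\tau>0$ so that $\{u=t\}\cap B_{M\sqrt{t}}=\emptyset$ for every $t\in(0,\tau]$. This directly contradicts the nonemptiness established in the previous paragraph for all small $t>0$. Hence the assumption that $0$ is a type I singular point is untenable, and $0$ must be a type II singular point. Finally, since Remark \ref{no local minimum} rules out interior local minima, any type I singular point is necessarily a local maximum point, which gives the second assertion.

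The only real point requiring care — and what I would flag as the main obstacle — is matching the constants $M$ and $\beta$ between the two propositions and making sure the relevant $t$-interval genuinely shrinks to $0$. Proposition \ref{saddle point} produces the curve with the bound $\left|x(s)\right|\leq 2\beta\sqrt{u(x(s))}$, where $\beta$ is fixed by the \L ojasiewicz inequality; Proposition \ref{clearing-out} allows an \emph{arbitrary} $M>0$, so we are free to take $M=2\beta$ (or $M=2\beta+1$ to be safe, since then $x(s)\in B_{M\sqrt{t}}$ holds with room to spare). Because $u(x(s))\to 0$ as $s\to 0^{+}$ and $u(x(s))$ is strictly increasing in $s$, the values $t=u(x(s))$ sweep out $(0,t_0]$, so they certainly enter $(0,\tau]$; this is where the contradiction bites. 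No further estimates are needed — everything hard has already been packaged into Propositions \ref{saddle point} and \ref{clearing-out} — so the proof is short.
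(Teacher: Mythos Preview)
Your proposal is correct and follows essentially the same route as the paper's proof: assume type I, invoke Proposition~\ref{saddle point} to produce points $x(s)\in\{u=t\}\cap \bar B_{2\beta\sqrt{t}}$ for arbitrarily small $t>0$, and contradict Proposition~\ref{clearing-out}. The only cosmetic difference is that the paper takes $M=3\beta$ (rather than $2\beta$) in the clearing-out proposition to sidestep the open-versus-closed ball issue you flagged.
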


\begin{proof}
By Proposition \ref{clearing-out} with $M=3\beta$ , where $\beta$
is the positive constant in Definition \ref{type I singularity},
there exists $\tau>0$ such that 
\[
\Sigma_{t}\cap B_{3\beta\sqrt{t}}=\emptyset
\]
for every $t\in\left(0,\tau\right]$, where $\Sigma_{t}=\left\{ u=t\right\} .$
On the other hand, by Proposition \ref{saddle point} there exists
a continuous curve $x\left(s\right)$ satisfying $u\left(x\left(0\right)\right)=u\left(0\right)=0$,
$u\left(x\left(s\right)\right)>0$, and
\[
x\left(s\right)\in\Sigma_{u\left(x\left(s\right)\right)}\cap\bar{B}_{2\beta\sqrt{u\left(x\left(s\right)\right)}}
\]
for every $s\in\left[0,\delta\right]$. A contradiction then follows. 
\end{proof}
\begin{rem}
\label{characterization of regular point}Remark \ref{regular value}
can be improved by Theorem \ref{no type I saddle} as follows: When
$0$ is a type I singular point, there exists a neighborhood of $0$
in which a point $x$ is a regular point if and only if $u\left(x\right)\neq0$
($\Leftrightarrow u\left(x\right)<0)$. 
\end{rem}

\subsection{\label{neckpinch singularities}Neckpinch Singularity}

In this subsection $0$ is assumed to be a $1$-cylindrical point.
Our goal is to prove 
\begin{thm}
\label{characterization of regular singular point}If the $1$-cylindrical
point $0$ is a type I singular pont (see Definition \ref{type I singularity}),
then it must be a regular singular point (see Definition \ref{regular singular point}).
\end{thm}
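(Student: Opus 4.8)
The plan is to show that a type I $1$-cylindrical point $0$ forces the singular set near $0$ to be a $C^1$ embedded curve, which by Theorem \ref{continuity of Hessian} (equivalently Corollary \ref{locally C^2}) is exactly the statement that $0$ is a regular singular point. By Theorem \ref{no type I saddle}, $0$ is a local maximum point, so by Remark \ref{characterization of regular point} there is a ball $B_\delta$ in which $u\le 0$ and the regular points are precisely $\{u<0\}$; thus $\mathcal S\cap B_\delta=\{u=0\}\cap B_\delta$. The task is to upgrade this set from being merely a subset of a Lipschitz graph (Theorem \ref{rectifiability}, with $k=1$ since the hypothesis $\mathcal S=\mathcal S_0\cup\mathcal S_1$ rules out round points here) to being a full $C^1$ curve. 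Working in coordinates $x=(y,z)\in\mathbb R^{n-1}\times\mathbb R$ adapted to the tangent flow $\sqrt{-t}\,\mathcal C_1$ at $0$, I would first establish that for each fixed $z$ near $0$ there is genuinely a point of $\mathcal S$ on the slice $\{z=\text{const}\}$: this is the content of the argument in the ($\Rightarrow$) direction of Theorem \ref{continuity of Hessian}, using the Łojasiewicz inequality to run the maximum-of-$u$-on-a-slice argument (the interior maximum $y_0$ of $u(\cdot,z_0)$ over $|y|\le\delta$ lies inside the cone $\mathscr C_\phi$ by the asymptotically-cylindrical normal field, and there $\nabla u(y_0,z_0)$ is vertical, and the Łojasiewicz inequality plus the equation $(\ref{level set flow})$ forces $\nabla u(y_0,z_0)=0$). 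Then $\psi(z_0):=y_0$ is well-defined provided this maximizer is unique.

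The crux will be upgrading regularity of $z\mapsto\psi(z)$, and here the type I hypothesis replaces the global $C^2$ hypothesis of \cite{CM5}. The strategy I would pursue: Theorem \ref{rectifiability} already gives that $\mathcal S\cap B_\delta$ is contained in a Lipschitz graph $y=\psi(z)$ with small Lipschitz constant, and that the lower-stratum $\mathcal S_0\cap B_\delta$ has Hausdorff dimension at most $0$, i.e. is a discrete set of round points. But a round point is isolated (Section \ref{round points}) and near it $u=t$ is a whole shrinking sphere for $t$ slightly below $0$, which is incompatible with $0$ being a limit of a connected Lipschitz-graph of singular points emanating through it — more precisely, at a round point $p$ the flow near $p$ locally vanishes and $\{u=0\}$ near $p$ is just $\{p\}$, so $p$ cannot be an accumulation point of $\mathcal S$; combined with the type I structure $\mathcal S\cap B_\delta=\{u=0\}\cap B_\delta$ and the fact (to be argued) that this set surjects onto $B_\delta^1$ under $z$-projection, we rule out round points entirely in a smaller ball, so $\mathcal S\cap B_{\delta'}=\mathcal S_1\cap B_{\delta'}$ is exactly the graph of $\psi$. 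Then by Remark \ref{Reifenberg} (the $C^1$ conclusion there), once $\mathcal S_1\cap B$ \emph{agrees} with the full graph of $\psi$, automatically $\psi\in C^1$ and $T_x\mathcal S$ is the axis of the tangent cylinder at each $x$. So the real work is: (i) the Łojasiewicz/maximum argument producing a singular point on every vertical slice, (ii) uniqueness of that point so $\psi$ is single-valued and its graph fills $\mathcal S$, and (iii) excluding round points in a neighborhood.

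For step (ii) — uniqueness of the slice-maximizer, equivalently showing $\{u=0\}$ meets each fiber $\{z=z_0\}$ in a single point — I would argue as in Theorem \ref{continuity of Hessian}: any such singular point $x_0=(\psi(z_0),z_0)$ is a $1$-cylindrical point by the discriminant Remark \ref{discriminant} (its Hessian is within the gap $(\ref{continuity of Hessian: Hessian})$ of $\nabla^2u(0)$, forcing nullity exactly $1$), hence by Remark \ref{Reifenberg} the whole set $\mathcal S\cap(B_\delta^{n-1}\times B_\delta^1)$ is \emph{contained} in one small-Lipschitz graph over $B_\delta^1$, which can contain at most one point per fiber. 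Then along the curve $s\mapsto(\psi(z),z)$ one computes $\partial_z[u(\psi(z),z)]=\nabla u(\psi(z),z)\cdot(\psi'(z),1)=0$ (valid a.e. by Lipschitzness of $\psi$ and $C^1$ of $u$ away from... — here one uses $C^{1,1}$ of $u$ from Theorem \ref{regularity}), so $u\equiv 0$ along the graph, confirming the graph lies in $\mathcal S$ and hence equals $\mathcal S\cap B_{\delta'}$.

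The main obstacle I anticipate is making rigorous that the slice-maximizer lies strictly inside the cone $\mathscr C_\phi$ and that its gradient is \emph{exactly} vertical (not merely nearly so): the asymptotically-cylindrical estimate (Remark \ref{continuity of Hessian outside the cone}) controls $\nabla u$ only outside $\mathscr C_\phi$, and one must combine the maximum property on the compact slice $\{|y|\le\delta, z=z_0\}$ with the fact that on $\partial B_\delta^{n-1}\cap\{z=z_0\}$ and on $\{|y|=|z_0|\tan\phi\}$ the outward normal component of $\nabla u$ has a sign (the flow is moving inward there), to trap the maximizer in the interior of $\mathscr C_\phi\cap B_\delta$, where then $\nabla u$ being vertical is a consequence of $\{z=z_0\}$-criticality and the product structure of the approximate geometry. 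Once past that, the contradiction with the equation via the Hessian gap is the same clean computation already displayed in the proof of Theorem \ref{continuity of Hessian}, and the rest is assembling Theorems \ref{rectifiability}, \ref{continuity of Hessian}, and the round-point exclusion. I would therefore organize the proof as: (a) invoke Theorem \ref{no type I saddle} and Remark \ref{characterization of regular point} to get $\mathcal S\cap B_\delta=\{u=0\}\cap B_\delta$ and $u\le0$; (b) run the slice-maximum argument with the Łojasiewicz inequality to populate every fiber with a $1$-cylindrical singular point; (c) use Remark \ref{Reifenberg} to see $\mathcal S\cap B_{\delta'}$ is a single small-Lipschitz graph, hence exactly $\{y=\psi(z)\}$; (d) conclude $\psi\in C^1$ from the "agrees with the graph" clause of Remark \ref{Reifenberg}, which is precisely the hypothesis of Theorem \ref{continuity of Hessian}'s converse, giving $u\in C^2$ near $0$, i.e. $0$ is a regular singular point.
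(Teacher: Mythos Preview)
Your proposal has a genuine gap at step (b), which is the heart of the matter. You claim that at the slice maximizer $x_0=(y_0,z_0)$, where $\nabla u(x_0)$ is vertical, ``the \L ojasiewicz inequality plus the equation $(\ref{level set flow})$ forces $\nabla u(y_0,z_0)=0$.'' But this substitution of the \L ojasiewicz inequality for the $C^2$ hypothesis in the $(\Rightarrow)$ proof of Theorem \ref{continuity of Hessian} does not go through. That proof uses the Hessian bound $(\ref{continuity of Hessian: Hessian})$, i.e., $|\nabla^2 u(x_0)-\nabla^2 u(0)|$ small, to compute $-(\mathrm I - N\otimes N)\cdot\nabla^2 u(x_0)>1$ and contradict the equation. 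The \L ojasiewicz inequality $|u|^{1/2}\le\beta|\nabla u|$ gives you no control whatsoever on $\nabla^2 u(x_0)$; it only tells you that if $u_{\max}(z_0)<0$ then $|\nabla u(x_0)|\ge\beta^{-1}\sqrt{-u_{\max}(z_0)}>0$, which is the \emph{opposite} of what you want. You repeat the circularity in step (ii) when you invoke the gap $(\ref{continuity of Hessian: Hessian})$ directly---that inequality is a consequence of continuity of $\nabla^2 u$, which is exactly what is to be proved. In short, nothing in your toolkit rules out $u_{\max}(z_0)<0$ for some $z_0$ arbitrarily close to $0$, and once such ``gaps'' in the singular set exist your slice argument produces no critical point there.

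The paper's proof is organized entirely around this obstruction and proceeds by contradiction. Assuming $0$ is type I but \emph{not} a regular singular point, Lemma \ref{dashed singular set} shows precisely that there is a sequence $z_i\to 0$ with $u_{\max}(z_i)<0$. The bulk of Section \ref{neckpinch singularities} (Lemmas \ref{characterization of path component}--\ref{gradient at level maximum point } and Propositions \ref{path-connected super level set}--\ref{isolated singularity in the upper-half }) then carries out a delicate analysis of the path components $\hat\Omega_t$ of superlevel sets to show that $u_{\max}$ is nonincreasing and strictly negative on one side, so that $0$ is an \emph{isolated} singular point in $B_1^{n-1}\times[0,z_0]$. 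Finally, Proposition \ref{arc length of flow line} shows that integral curves of $N=\nabla u/|\nabla u|$ starting on the lateral boundary of the truncated cone $\mathscr C_\phi^0$ stay in the cone and reach $0$ with arc length at most $2\beta\sqrt{-u(p)}$ (this is where the \L ojasiewicz inequality is actually used). Comparing this arc-length bound with the asymptotically cylindrical estimate $(\ref{arrival time on boundary of cone})$ for $-u$ on the cone boundary yields $\check z\le 2\beta\sqrt{-\check t}$ versus $\check z\ge\sqrt{n-2}\,\cot\phi\,\sqrt{-\check t}$, a contradiction once $\phi<\tan^{-1}(\sqrt{n-2}/(2\beta))$. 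The \L ojasiewicz inequality thus enters through distance/arc-length estimates along gradient trajectories, not through any Hessian control at slice maximizers.
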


\noindent by the method of contradiction, so let us assume throughout
this subsection that
\begin{assumption}
\label{assumption of characterization theorem}The $1$-cylindrical
point $0$ is a type I singular point but fails to be a regular singular
point. 
\end{assumption}

\noindent  and we manage to get a contradiction. 

\medskip{}

For ease of notations, the orientation is chosen in such a way that
the tangent flow at $0$ is the self-shrinking of the $1$-cylinder
\[
\mathcal{C}_{1}=S_{\sqrt{2\left(n-2\right)}}^{n-2}\times\mathbb{R}^{1}
\]
and we will adopt the notation $x=\left(y,z\right)\in\mathbb{R}^{n-1}\times\mathbb{R}$
for coordinates. 

Let $\phi,\epsilon\in\left(0,1\right)$ be small constants to be determined.\footnote{The constant $\phi$ will be determined in (\ref{choice of phi}),
where $\beta$ is the constant in Definition \ref{type I singularity},
and then $\epsilon$ will be chosen sufficiently small depending on
$n$ and the choice of $\phi$. } By Remark \ref{Reifenberg} and the parabolic rescaling,\footnote{If $u$ satisfies (\ref{level set flow}) and is $\left(\phi,\epsilon\right)$-asymptotically
cylindrical in $B_{r}$, then $x\mapsto r^{-2}u\left(rx\right)$ satisfies
(\ref{level set flow}) and is $\left(\phi,\epsilon\right)$-asymptotically
cylindrical in $B_{1}$.} we may assume that the level set flow is $\left(\phi,\epsilon\right)$-asymptotically
cylindrical in $B_{1}^{n-1}\times\left(-1,1\right)$. Likewise, by
the type I condition and the parabolic rescaling,\footnote{If $u$ satisfies the \L ojasiewicz inequality in Definition \ref{type I singularity}
in $B_{r}$, then $x\mapsto r^{-2}u\left(rx\right)$ satisfies the
same \L ojasiewicz inequality in $B_{1}$.} we may also assume that the \L ojasiewicz inequality in Definition
\ref{type I singularity} holds in $B_{1}^{n-1}\times\left(-1,1\right)$.
Moreover, by Theorem \ref{no type I saddle} and the parabolic rescaling,
we may even assume that that $u\left(0\right)=0$ is the maximum value
of $u$ in $B_{1}^{n-1}\times\left(-1,1\right)$.

As is considered in (\ref{continuity of Hessian: maximum point on each level})
in Theorem \ref{continuity of Hessian}, let us define the function
\begin{equation}
u_{\textrm{max}}\left(z\right)=\max_{\left|y\right|\leq1}\,u\left(y,z\right)=\max_{\left|y\right|<\left|z\right|\tan\phi}\,u\left(y,z\right).\label{u_max}
\end{equation}
Note that the last equality in (\ref{u_max}) comes from the the asymptotically
cylindrical behavior of the level set flow in $B_{1}^{n-1}\times\left(-1,1\right)$
outside the cone $\mathscr{C}_{\phi}=\left\{ \left|y\right|\leq\left|z\right|\tan\phi\right\} $.
\begin{lem}
\label{dashed singular set}There exists a sequence $\left\{ z_{i}\right\} _{i\in\mathbb{N}}$
in $\left(-1,1\right)$ that converges to $0$ with $u_{\textrm{max}}\left(z_{i}\right)<0$
for every $i$. 

Upon passing to a subsequence and changing the orientation of the
$z$-axis if necessary, we may assume that $z_{i}\in\left(0,1\right)$
for every $i$.
\end{lem}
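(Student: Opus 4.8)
The plan is to argue by contradiction using Assumption \ref{assumption of characterization theorem} together with the structural results on the singular set. By Theorem \ref{no type I saddle} the point $0$ is a local maximum point, so after the parabolic rescaling described above we have $u\le 0$ in $B_1^{n-1}\times(-1,1)$, with $u(0)=0$. The key dichotomy is this: either $u_{\textrm{max}}(z)=0$ for all $z$ in some small interval around $0$, or there is a sequence $z_i\to 0$ with $u_{\textrm{max}}(z_i)<0$. I claim the first alternative forces $0$ to be a regular singular point, contradicting Assumption \ref{assumption of characterization theorem}; hence the second alternative must hold, which is exactly the conclusion of the lemma (after passing to a subsequence and possibly reflecting the $z$-axis so that $z_i>0$).

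So the main work is to show: if $u_{\textrm{max}}(z)=0$ for every $z$ in a neighborhood of $0$, then near $0$ the singular set $\mathcal S$ is a $C^1$ embedded curve, so that by Theorem \ref{continuity of Hessian} (equivalently Corollary \ref{locally C^2} and Definition \ref{regular singular point}) $u$ is $C^2$ near $0$. The argument runs parallel to the ($\Rightarrow$) direction of Theorem \ref{continuity of Hessian}: for each $z$ near $0$, pick $y_0=y_0(z)$ with $|y_0|\le |z|\tan\phi$ realizing the maximum $u_{\textrm{max}}(z)=u(y_0,z)$. Since $u_{\textrm{max}}(z)=0=u(0)$ is the global maximum value in the cylinder, the point $(y_0,z)$ is a local maximum of $u$ along the slice $\{z=\text{const}\}$, so $\nabla u(y_0,z)\in\{0\}\times\mathbb R$ (the $z$-direction). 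One then shows $\nabla u(y_0,z)=0$ outright: if not, $(y_0,z)$ would be a regular point, and feeding the near-cylindrical Hessian estimate (\ref{Hessian at cylindrical point}) together with the fact that $\nabla u$ points almost purely in the $z$-direction into equation (\ref{level set flow}) gives $-(\mathrm I - N\otimes N)\cdot\nabla^2 u > 1$, a contradiction — exactly the computation performed inside the proof of Theorem \ref{continuity of Hessian}. Thus every slice through a point near $0$ contains a singular point inside the cone $\mathscr C_\phi$; by Remark \ref{discriminant} and the continuity-of-Hessian criterion this singular point is $1$-cylindrical, and Remark \ref{Reifenberg}/Theorem \ref{rectifiability} then upgrade $\mathcal S$ near $0$ to the graph of a $C^1$ map $\psi$, i.e.\ a $C^1$ embedded curve. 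This contradicts the failure of regularity, so the sequence with $u_{\textrm{max}}(z_i)<0$ exists.

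Finally, the passage to a subsequence with $z_i\in(0,1)$ is immediate: infinitely many $z_i$ lie on one side of $0$, and if they lie on the negative side we replace $u(y,z)$ by $u(y,-z)$, which again solves (\ref{level set flow}) and retains all the hypotheses (asymptotic cylindricality, \L ojasiewicz inequality, local maximality at $0$) by the reflection symmetry $z\mapsto -z$.

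The step I expect to be the main obstacle is establishing that $\nabla u(y_0,z)=0$ rather than merely that $\nabla u(y_0,z)$ is $z$-directional. The delicate point is that $y_0(z)$ need not be unique a priori and one only has the near-cylindrical control on $\nabla^2 u$ outside the cone directly, so one must be careful that the maximizer $y_0$ actually lies strictly inside $\mathscr C_\phi$ (which comes from examining the sign of $N=\nabla u/|\nabla u|$ on the slice outside the cone, where the flow is asymptotically cylindrical and hence $\nabla u$ has a definite radial-in-$y$ component) before the contradiction with (\ref{level set flow}) can be run. Once that is in place, everything else is a citation of the machinery already assembled in Section \ref{continuity of Hessian and singular set}.
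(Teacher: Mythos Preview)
Your overall scaffold --- argue by contradiction, assume $u_{\max}(z)=0$ on an interval, produce a singular point on every slice, then upgrade $\mathcal S$ to a $C^1$ curve and contradict Assumption \ref{assumption of characterization theorem} --- matches the paper's exactly. The second paragraph of your proposal, however, misidentifies where the work lies and tries to carry it out with a tool you do not have.

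For the step $\nabla u(y_0,z)=0$: the computation you borrow from the $(\Rightarrow)$ direction of Theorem \ref{continuity of Hessian} relies on the bound (\ref{continuity of Hessian: Hessian}), i.e.\ $|\nabla^2 u(x)-\nabla^2 u(0)|$ small for \emph{all} nearby $x$, including those inside the cone $\mathscr C_\phi$. That bound was the \emph{hypothesis} there; here it is precisely what Assumption \ref{assumption of characterization theorem} withholds, and the asymptotic cylindricality only controls $\nabla^2 u$ outside the cone, whereas $(y_0,z)$ sits inside it. So the contradiction with (\ref{level set flow}) cannot be run. The paper bypasses this entirely with the observation you half-made but did not follow through on: since $u(y_0,z)=u_{\max}(z)=0$ equals the maximum of $u$ on $B_1^{n-1}\times(-1,1)$, the point $(y_0,z)$ is an interior maximum of $u$ in the full $n$-dimensional neighborhood, so $\nabla u(y_0,z)=0$ directly --- no Hessian needed.

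The same circularity bites your next move: invoking ``Remark \ref{discriminant} and the continuity-of-Hessian criterion'' to conclude the singular point is $1$-cylindrical again presumes you know $\nabla^2 u$ at $(y_0,z)$ is near $\nabla^2 u(0)$. The paper instead argues by counting: near $0$ one has $\mathcal S=\mathcal S_0\cup\mathcal S_1$ with $\mathcal S_0$ countable (round points are isolated) and $\mathcal S_1$ inside a Lipschitz graph $y=\psi(z)$; hence $\mathcal S_1$ meets all but countably many slices, and upper semicontinuity of Gaussian density then forces $\mathcal S_1$ to fill the entire graph. Remark \ref{Reifenberg} now applies to give the $C^1$ curve. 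Your final paragraph on reflecting the $z$-axis is fine.
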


\begin{proof}
Suppose the contrary that there exists $\delta>0$ such that
\[
u_{\textrm{max}}\left(z\right)=0\quad\forall-\delta\leq z\leq\delta.
\]
As $0$ is the maximum value of $u$ near the point $0$, the singular
set $\mathcal{S}$, which by Theorem \ref{regularity} is the set
of critical points of $u$, intersects the hyperplane $z=z_{0}$ for
every $z_{0}\in\left[-\delta,\delta\right]$. Note that the points
of intersection are inside the cone $\mathscr{C}_{\phi}$.

On the other hand, near the $1$-cylindrical point $0$ Theorem \ref{rectifiability}
says that $\mathcal{S}=\mathcal{S}_{1}\cup\mathcal{S}_{0}$; the set
$\mathcal{S}_{1}$ is contained in a Lipschitz curve $y=\psi\left(z\right)$
and the set $\mathcal{S}_{0}$ is countable.\footnote{In view of Section \ref{round points}, every round point is isolated
and hence the set $\mathcal{S}_{0}$ is discrete.} Consequently, the set $\mathcal{S}_{1}$ intersects $z=z_{0}$ (inside
the cone $\mathscr{C}_{\phi}$) for all but countably many $z_{0}\in\left[-\delta,\delta\right]$.
It then follows from the upper semicontinuity of Gaussian density
(cf. \cite{E}) that $\mathcal{S}_{1}$ actually fills the Lipschitz
curve $y=\psi\left(z\right)$. By Remark \ref{Reifenberg}, $\mathcal{S}_{1}$
is indeed a $C^{1}$ embedded curve near the a $1$-cylindrical point
$0$; that is, $0$ is a regular singular point by Theorem \ref{continuity of Hessian}.
Thus we get a contradiction. 
\end{proof}
Fix $z_{*}\in\left(0,\frac{1}{2}\right]$ such that $t_{*}=u_{\textrm{max}}\left(z_{*}\right)\in\left[\frac{-1}{8\left(n-2\right)},0\right)$.\footnote{The reason of choosing $t_{*}\geq\frac{-1}{8\left(n-2\right)}$ is
to make sure that the asymptotically cylindrical part of $u\geq t_{*}$
outside the cone $\mathscr{C}_{\phi}$ is roughtly contained in $\bar{B}_{\nicefrac{1}{2}}^{n-1}\times\left(-1,1\right)$
in view of (\ref{asymptotic arrival time outside cone}).} From now on let us turn our attention to the region $B_{1}^{n-1}\times\left[0,z_{*}\right]$
and define the truncated cone 
\[
\mathscr{C}_{\phi}^{*}=\left\{ \left|y\right|\leq z\,\tan\phi\right\} \cap\left\{ 0\leq z\leq z_{*}\right\} .
\]
Note that the contour map of $u$ in $\left(B_{1}^{n-1}\times\left[0,z_{*}\right]\right)\setminus\mathscr{C}_{\phi}^{*}$
is clear from the asymptotically cylindrical behavior of the level
set flow; that is,\footnote{See (\ref{cylindrical arrival time}) with $k=1.$}
\begin{equation}
u\left(y,z\right)\approx\frac{-1}{2\left(n-2\right)}\left|y\right|^{2}\quad\,\,\forall\,z\,\tan\phi\leq\left|y\right|\leq1,\,\,0\leq z\leq z_{*}.\label{asymptotic arrival time outside cone}
\end{equation}
In particular, on the lateral boundary of the cone $\mathscr{C}_{\phi}^{*}$
we have the approximation for the following parabolic-scaling-invariant
quantity\footnote{Given the assumption that $u$ is $\left(\phi,\epsilon\right)$-asymptotically
cylindrical in $B_{1}^{n-1}\times\left(-1,1\right)$, by Theorem \ref{asymptotically cylindrical},
Definition \ref{cylindrical scale}, and Remark \ref{Reifenberg},
the error of the approximation in (\ref{arrival time on boundary of cone})
can be made uniformly small, say no larger than $\frac{\tan^{2}\phi}{4\left(n-2\right)}$,
on the whole lateral boundary (except on the point $0$) by choosing
$\epsilon$ tiny. Moreover, we have 
\[
\lim_{z\rightarrow0}\frac{-u\left(z\omega\,\tan\phi,z\right)}{z^{2}}=\frac{\tan^{2}\phi}{2\left(n-2\right)}.
\]
} 
\begin{equation}
\frac{-u\left(z\omega\,\tan\phi,z\right)}{z^{2}}\approx\frac{\tan^{2}\phi}{2\left(n-2\right)}\label{arrival time on boundary of cone}
\end{equation}
for $0<z\leq z_{*}$ and $\omega\in S^{n-2}$.
\begin{rem}
\label{structure of level set prior to singular time}By Remark \ref{regular value}
every $t\in\left(t_{*},0\right)$ is a regular value of $u$ in $B_{1}^{n-1}\times\left[0,z_{*}\right]$.
Specifically, the set $u=t$ in $B_{1}^{n-1}\times\left[0,z_{*}\right]$
is a non-empty,\footnote{This follows from $u\leq u_{\textrm{max}}\left(z_{*}\right)$ on $B_{1}^{n-1}\times\left\{ z_{*}\right\} $
and (\ref{asymptotic arrival time outside cone}).} compact, smoothly embedded, and strictly mean-convex hypersurface
with boundary on the hyperplane $z=0$;\footnote{Since $u\leq t_{*}$ on $B_{1}^{n-1}\times\left\{ z_{*}\right\} $,
the hypersurface $u=t$ in $B_{1}^{n-1}\times\left[0,z_{*}\right]$
is away from the hyperplane $z=z_{*}$. } moreover, it is asymptotic to a cylinder outside the cone $\mathscr{C}_{\phi}^{*}$.
Let $\hat{\Sigma}_{t}$ be the path component of $u=t$ in $B_{1}^{n-1}\times\left[0,z_{*}\right]$
that includes the asymptotically cylindrical part of $u=t$ outside
the cone $\mathscr{C}_{\phi}^{*}$. It is not hard to see that $\hat{\Sigma}_{t}$
is a compact hypersurface in $B_{1}^{n-1}\times\left[0,z_{*}\right)$
with boundary 
\begin{equation}
\partial\hat{\Sigma}_{t}=\left\{ u=t\right\} \cap\left(B_{1}^{n-1}\times\left\{ 0\right\} \right)=\partial\left(\left\{ u=t\right\} \cap\left(B_{1}^{n-1}\times\left[0,z_{*}\right]\right)\right).\label{path component with boudary}
\end{equation}
Every other path component of $u=t$ in $B_{1}^{n-1}\times\left[0,z_{*}\right]$,
if any, is a closed hypersurface in $B_{1}^{n-1}\times\left(0,z_{*}\right)$
in view of (\ref{path component with boudary}). Additionally, since
all the path components are compact and mutually disjoint, they are
away from each other.
\end{rem}

\begin{defn}
\label{path component}For every $t\in\left(t_{*},0\right)$, let
$\hat{\Omega}_{t}$ be the path component of $u\geq t$ in $B_{1}^{n-1}\times\left[0,z_{*}\right]$
that contains the asymptotically cylindrical part of $u\geq t$ outside
the cone $\mathscr{C}_{\phi}^{*}$. Note that $\hat{\Omega}_{t}$
is away from the hyperplane $z=z_{*}$. 
\end{defn}

In the following lemma we investigate the structure of $\hat{\Omega}_{t}$.
\begin{lem}
\label{characterization of path component}For every $t\in\left(t_{*},0\right)$,
$\hat{\Omega}_{t}$ is the region bounded by $\hat{\Sigma}{}_{t}$
(see Remark \ref{structure of level set prior to singular time})
and the hyperplane $z=0$; moreover, $\hat{\Sigma}_{t}\subset\hat{\Omega}_{t}$.
\end{lem}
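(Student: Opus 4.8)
The plan is to show that $\hat{\Omega}_t$ is exactly the closed region enclosed by the hypersurface-with-boundary $\hat{\Sigma}_t$ together with the flat piece it cuts out on the hyperplane $z=0$, and that $\hat{\Sigma}_t$ itself lies in the closure of this region. First I would fix $t\in(t_*,0)$ and recall from Remark \ref{structure of level set prior to singular time} that $\hat{\Sigma}_t$ is a compact, smoothly embedded, strictly mean-convex hypersurface in $B_1^{n-1}\times[0,z_*)$ whose boundary lies on $\{z=0\}$ and which, outside the cone $\mathscr{C}_\phi^*$, is the normal graph over a shrinking truncated cylinder. Together with the disk-like region it bounds inside $B_1^{n-1}\times\{0\}$ (this is where the asymptotic formula \eqref{asymptotic arrival time outside cone} is used: the trace $\hat{\Sigma}_t\cap\{z=0\}$ is, up to a small perturbation, the round sphere $|y|^2 = -2(n-2)t$, hence bounds a topological disk in the hyperplane), the surface $\hat{\Sigma}_t$ forms the topological boundary of a well-defined compact region $R_t\subset B_1^{n-1}\times[0,z_*)$; I would verify this using the strict mean-convexity and the asymptotically cylindrical structure so that there is no ambiguity in ``the region bounded by.'' The claim is then $\hat{\Omega}_t = R_t$.

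The key steps, in order: (1) Show $R_t\subset\{u\ge t\}$. On $\hat{\Sigma}_t$ we have $u=t$; on the flat part $R_t\cap\{z=0\}$ we have $u\ge t$ because that disk is swept by level sets $\{u=s\}$ with $s\ge t$ by \eqref{asymptotic arrival time outside cone}. For an interior point $x\in R_t$ with $z(x)>0$, note $u$ has no interior local minimum (Remark \ref{no local minimum}) and $u(0)=0$ is the max on the relevant region; the foliation of a neighborhood of $R_t\setminus\hat{\Sigma}_t$ by level sets, plus the fact that the only critical points are in $\mathcal{S}$ and all interior critical points are local maxima, forces $u\ge t$ throughout $R_t$. (2) Show $R_t$ is path-connected and contains the asymptotically cylindrical part of $\{u\ge t\}$ outside $\mathscr{C}_\phi^*$: this is immediate since outside the cone $\{u\ge t\}$ is literally a solid shrinking cylinder-with-caps, which is connected and sits inside $R_t$ by construction. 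Hence $R_t\subset\hat{\Omega}_t$. (3) For the reverse inclusion, suppose $x\in\hat{\Omega}_t\setminus R_t$. Then the segment (or a path) from $x$ to the asymptotic part must cross $\partial R_t = \hat{\Sigma}_t\cup(\text{flat disk})$; crossing the flat disk is impossible without leaving $B_1^{n-1}\times[0,z_*]$ (that disk lies in $\{z=0\}$, the boundary of our slab), and crossing $\hat{\Sigma}_t=\{u=t\}$ is fine but lands one back in $R_t$. More carefully: $\hat{\Omega}_t$ is by Definition \ref{path component} a path component of $\{u\ge t\}$, and $\partial\hat{\Omega}_t\cap(B_1^{n-1}\times(0,z_*])$ consists of level-$t$ hypersurfaces; the path component containing the asymptotic part can only be bounded by $\hat{\Sigma}_t$ (any other path component of $\{u=t\}$ from Remark \ref{structure of level set prior to singular time} is a closed hypersurface bounding a region \emph{disjoint} from the asymptotic part, so it cannot be on the boundary of $\hat{\Omega}_t$). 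This pins down $\hat{\Omega}_t\subset R_t$. (4) Finally $\hat{\Sigma}_t\subset\hat{\Omega}_t$: $\hat{\Sigma}_t=\{u=t\}\subset\{u\ge t\}$ and it is connected and touches the asymptotic part, so it lies in the same path component.

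The main obstacle I anticipate is Step (3) — precisely ruling out that $\hat{\Omega}_t$ is strictly larger than $R_t$, i.e., that the ``inside'' region $\{u\ge t\}$ connected to the asymptotic cylindrical part does not also reach across to some other chamber through a passage that my description of $\partial R_t$ missed. This requires a clean topological argument that $\partial\hat{\Omega}_t$, within the open slab $B_1^{n-1}\times(0,z_*)$, is contained in $\{u=t\}$ (true since $\hat{\Omega}_t$ is a path component of a closed sublevel-type set and $t$ is a regular value away from $\mathcal{S}$), combined with the separation statement from Remark \ref{structure of level set prior to singular time} that the various path components of $\{u=t\}$ are mutually disjoint and only one of them — namely $\hat{\Sigma}_t$ — meets the asymptotic region. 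The strict mean-convexity of $\hat{\Sigma}_t$ is what guarantees it genuinely bounds a region on the correct side (the mean curvature vector points into $\{u>t\}$, i.e., into $R_t$), which is the geometric input that closes the argument. The remaining steps are routine once this separation/orientation bookkeeping is in place.
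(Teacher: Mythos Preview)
Your outline is sound and invokes the same geometric inputs as the paper (no interior local minima, strict mean-convexity for orientation, Jordan--Brouwer separation), but the organization differs. The paper does \emph{not} first construct a target region $R_t$ and then prove $\hat\Omega_t=R_t$ by double inclusion. Instead it proves directly that $\partial\hat\Omega_t\cap\{z>0\}\subset\hat\Sigma_t$, arguing by contradiction: if this fails, some point of $\partial\hat\Omega_t\cap\{z>0\}$ lies on another path component $\Gamma_t$ of $\{u=t\}$, which by Remark~\ref{structure of level set prior to singular time} is a \emph{closed} embedded hypersurface. Jordan--Brouwer applied to $\Gamma_t$ itself (no capping needed) yields a bounded domain $\mathcal D_t$; the no-local-minimum principle forces $\mathcal D_t\subset\{u\ge t\}$, and mean-convexity gives $u<t$ in a collar just outside $\mathcal D_t$. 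Since $\Gamma_t$ is path-connected and meets $\hat\Omega_t$, one has $\Gamma_t\subset\hat\Omega_t$; a path in $\hat\Omega_t$ from $\Gamma_t$ to a point of $\hat\Sigma_t\cap\{z=0\}$ must then exit $\overline{\mathcal D_t}$ (as $\mathcal D_t\subset\{z>0\}$). The paper makes ``exit'' precise via the signed distance function to $\partial\mathcal D_t$, producing a point on the path at signed distance $-\delta$ where $u<t$, contradicting $\hat\Omega_t\subset\{u\ge t\}$.

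Your route instead requires verifying up front that $\hat\Sigma_t$ capped by a disk in $\{z=0\}$ is an embedded closed topological hypersurface (so that $R_t$ exists via Jordan--Brouwer) and then determining its orientation relative to both $\{u\gtrless t\}$ and $\{z\gtrless 0\}$; you gesture at this but do not carry it out. More seriously, the ``more careful'' version of your Step~(3) asserts that any other component $\Gamma_t$ bounds a region disjoint from the asymptotic part and hence cannot lie on $\partial\hat\Omega_t$ --- but that is precisely the point requiring proof, and the paper's signed-distance exit argument is what supplies it. Your first version of Step~(3) (a path from $x\in\hat\Omega_t\setminus R_t$ to the asymptotic part must cross $\partial R_t$; crossing the flat disk would leave the slab; crossing $\hat\Sigma_t$ forces $u<t$ just outside) is essentially the mirror of the paper's argument with $R_t$ playing the role of $\mathcal D_t$, and becomes rigorous once you establish that the non-$R_t$ side of $\hat\Sigma_t$ has $u<t$ globally --- this follows from the connectedness of $\hat\Sigma_t$ together with the known picture outside the cone, but it must be said.
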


\begin{proof}
Fix $t\in\left(t_{*},0\right)$. By the path-connectedness of $\hat{\Sigma}_{t}$,
it is clear that $\hat{\Sigma}_{t}\subset\hat{\Omega}_{t}$. Moreover,
as $t$ is a regular value of $u$ in $B_{1}^{n-1}\times\left[0,z_{*}\right]$
(see Remark \ref{regular value}), we have $\hat{\Sigma}_{t}\subset\partial\hat{\Omega}_{t}$.
In what follows we would like to show that 
\[
\partial\hat{\Omega}_{t}\cap\left\{ z>0\right\} \,\subset\hat{\Sigma}_{t}
\]
by contradiction.

Suppose the contrary that $\left(\partial\hat{\Omega}_{t}\setminus\hat{\Sigma}_{t}\right)\cap\left\{ z>0\right\} \neq\emptyset$.
Since
\[
\partial\hat{\Omega}_{t}\cap\left\{ z>0\right\} \,\subset\,\left\{ u=t\right\} \cap\left(B_{1}^{n-1}\times\left(0,z_{*}\right)\right),
\]
it follows from Remark \ref{structure of level set prior to singular time}
that there exists a closed, path-connected, smoothly embedded, and
strictly mean-convex hypersurface $\Gamma_{t}$ that is contained
in $\left\{ u=t\right\} \cap\left(B_{1}^{n-1}\times\left(0,z_{*}\right)\right)$,
is away from $\hat{\Sigma}_{t}$, and satisfies $\Gamma_{t}\cap\partial\hat{\Omega}_{t}\neq\emptyset$.
By Remark \ref{regular value} and the path-connectedness of $\Gamma_{t}$
we then have $\Gamma_{t}\subset\hat{\Omega}_{t}$. 

By the Jordan-Brouwer separation theorem (cf. \cite{GP}), $\Gamma_{t}$
bounds an open connected set $\mathcal{D}_{t}$ in $B_{1}^{n-1}\times\left(0,z_{*}\right)$,\footnote{As $\Gamma_{t}\subset B_{1}^{n-1}\times\left(0,z_{*}\right)$, the
``outside'' of $\mathbb{R}^{n}\setminus\Gamma_{t}$ includes the
complement of $B_{1}^{n-1}\times\left(0,z_{*}\right)$. Therefore,
the ``inside'' should be contained in $B_{1}^{n-1}\times\left(0,z_{*}\right)$.} that is, $\Gamma_{t}=\partial\mathcal{D}_{t}$. In view of Remark
\ref{no local minimum}, we have $\mathcal{D}_{t}\subset\left\{ u\geq t\right\} .$
It follows that the level set flow near $\partial\mathcal{D}_{t}$,
which by Remark \ref{regular value} is a strictly mean-convex mean
curvature flow of closed and path-connected hypersurfaces, is outside
$\mathcal{D}_{t}$ prior to time $t$ and inside $\mathcal{D}_{t}$
past to time $t$. Namely, in a tubular neighborhood of $\partial\mathcal{D}_{t}$,
$u<t$ outside $\mathcal{D}_{t}$ and $u>t$ inside $\mathcal{D}_{t}$.\footnote{Actually, more can be said. Firstly, there exists $\delta>0$ so that
in a tubular neighborhood of $\Gamma_{t}$, the set $\Gamma_{\tau}=\left\{ u=\tau\right\} $,
where $\tau\in\left[t-\delta,t+\delta\right],$ is a strictly mean-convex
hypersurface in $B_{1}^{n-1}\times\left(0,z_{*}\right)$ that is diffeomorphic
to $\Gamma_{t}$; moreover, $\Gamma_{\tau}$ bounds an open connected
set $\mathcal{D}_{\tau}$ contained in $\left\{ u\geq\tau\right\} \cap\left(B_{1}^{n-1}\times\left(0,z_{*}\right)\right)$.
As the flow $\left\{ \Gamma_{\tau}\right\} $ moves inwardly, we have
$\mathcal{D}_{\tau_{1}}\supset\mathcal{D}_{\tau_{2}}$ for $t-\delta\leq\tau_{1}<\tau_{2}<t+\delta$.
Thus, the domain $\mathcal{D}_{t}$ can be written as $\underset{\tau\in\left(t,t+\delta\right]}{\cup}\Gamma_{\tau}\cup\mathcal{D}_{t+\delta}$,
from which we deduce that $\mathcal{D}_{t}\subset\left\{ u>t\right\} $.
Indeed, the domain $\mathcal{D}_{\tau}$ keeps contracting by the
mean curvature vector until it becomes $\mathcal{S}\cap\mathcal{D}_{t}$
when $\tau=0$. }

Fix $p\in\hat{\Sigma}_{t}\cap\left\{ z=0\right\} $. As $\Gamma_{t}$
and $p$ are included in the path-connected set $\hat{\Omega}_{t}$,
there exists a continuous curve 
\[
x:\left[0,1\right]\rightarrow\mathbb{R}^{n}
\]
such that $x\left(0\right)\in\Gamma_{t}$, $x\left(1\right)=p$, and
$x\left(s\right)\in\hat{\Omega}_{t}$ for every $s\in\left(0,1\right)$.
In addition, since $\Gamma_{t}$ and $\hat{\Sigma}_{t}$ are away
from each other (see Remark \ref{structure of level set prior to singular time}),
there is $\delta>0$ so that $\hat{\Sigma}_{t}$ is out of the $\delta$-tubular
neighborhood of $\Gamma_{t}=\partial\mathcal{D}_{t}$ and that in
the $\delta$-tubular neighborhood of $\partial\mathcal{D}_{t}$ we
have $u>t$ inside $\mathcal{D}_{t}$ and $u<t$ outside $\mathcal{D}_{t}$.
Recall that the signed distance function to $\mathcal{\partial D}_{t}$,
i.e.,
\[
d\left(x\right)=\left\{ \begin{array}{c}
\textrm{dist}\left(x,\partial\mathcal{D}_{t}\right),\quad x\in\mathcal{D}_{t}\\
-\textrm{dist}\left(x,\partial\mathcal{D}_{t}\right),\quad x\notin\mathcal{D}_{t}
\end{array}\right.,
\]
is smooth in a tubular neighborhood of $\partial\mathcal{D}_{t}$
and continuous in $\mathbb{R}^{n}$. So $d\left(x\left(s\right)\right)$
is a continuous function on $\left[0,1\right]$ with $d\left(x\left(0\right)\right)=0$
and $\left|d\left(x\left(1\right)\right)\right|>\delta$. In view
of $x\left(1\right)=p\in\left\{ z=0\right\} $ and $\mathcal{D}_{t}\subset B_{1}^{n-1}\times\left(0,z_{*}\right)$,
we get that $d\left(x\left(1\right)\right)<-\delta$. It then follows
from the intermediate value theorem that there exists $\mathring{s}\in\left(0,1\right)$
such that $d\left(x\left(\mathring{s}\right)\right)=-\delta$, yielding
$u\left(x\left(\mathring{s}\right)\right)<t$. However, we have 
\[
x\left(\mathring{s}\right)\in\hat{\Omega}_{t}\subset\left\{ u\geq t\right\} ,
\]
which is a contradiction. 
\end{proof}
\begin{rem}
\label{containment of truncated cone}For each $t\in\left(t_{*},0\right)$,
by the continuity of $u$ at $0$ there exists $r\in\left(0,z_{*}\right)$
so that $u\geq t$ in $B_{r}^{n-1}\times\left[-r,r\right]$. By Definition
\ref{path component} we then have $\mathscr{C}_{\phi}^{*}\cap\left\{ 0\leq z\leq r\right\} \subset\hat{\Omega}_{t}$.
Thus, it is not hard to see that 
\[
\hat{z}_{t}=\sup\left\{ \zeta\in\left(0,z_{*}\right):\mathscr{C}_{\phi}^{*}\cap\left\{ 0\leq z\leq\zeta\right\} \subset\hat{\Omega}_{t}\right\} 
\]
belongs to $\left(0,z_{*}\right)$ (by virtue of the continuity of
$u$) and that 
\[
\mathscr{C}_{\phi}^{*}\cap\left\{ 0\leq z<\hat{z}_{t}\right\} \subset\hat{\Omega}_{t}.
\]
Note that $\hat{z}_{t}\searrow0$ as $t\nearrow0$.
\end{rem}

Now let us fix $t_{0}\in\left(t_{*},0\right)$. The following proposition
is based on Lemma \ref{characterization of path component} and is
crucial to Lemma \ref{gradient at level maximum point }.
\begin{prop}
\label{path-connected super level set} For every $t\in\left[t_{0},0\right)$
we have
\[
\hat{\Omega}_{t}=\left\{ u\geq t\right\} \cap\hat{\Omega}_{t_{0}};
\]
in particular, $\left\{ u\geq t\right\} \cap\hat{\Omega}_{t_{0}}$
is path-connected.
\end{prop}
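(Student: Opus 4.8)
The case $t=t_0$ is immediate since $\hat{\Omega}_{t_0}\subseteq\left\{ u\geq t_0\right\} $, so fix $t\in\left(t_0,0\right)$ and prove the two inclusions separately. For $\hat{\Omega}_t\subseteq\left\{ u\geq t\right\} \cap\hat{\Omega}_{t_0}$ only $\hat{\Omega}_t\subseteq\hat{\Omega}_{t_0}$ requires an argument: since $-t\leq-t_0$, the asymptotically cylindrical part of $\left\{ u=t\right\} $ outside $\mathscr{C}_{\phi}^{*}$ satisfies $\left|y\right|\approx\sqrt{2\left(n-2\right)\left(-t\right)}$ by (\ref{asymptotic arrival time outside cone}), hence lies in the asymptotically cylindrical part of $\left\{ u\geq t_0\right\} $ and so in $\hat{\Omega}_{t_0}$; as $\hat{\Omega}_t$ is path-connected, contained in $\left\{ u\geq t_0\right\} \cap\left(B_1^{n-1}\times\left[0,z_*\right]\right)$, and meets the path component $\hat{\Omega}_{t_0}$, we get $\hat{\Omega}_t\subseteq\hat{\Omega}_{t_0}$. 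The same argument gives the nesting $\hat{\Sigma}_{\tau}\subseteq\hat{\Omega}_s$ whenever $t_*<s\leq\tau<0$, which I use freely below.

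The heart of the proof is the identity $\left\{ u=\tau\right\} \cap\hat{\Omega}_{t_0}=\hat{\Sigma}_{\tau}$ for every $\tau\in\left[t_0,0\right)$. Let $\Gamma$ be a connected component of $\left\{ u=\tau\right\} \cap\hat{\Omega}_{t_0}$. If $\bar{\Gamma}$ meets the face $\left\{ z=0\right\} $, the meeting point is not the origin (there $u=0\neq\tau$), so it lies outside $\mathscr{C}_{\phi}^{*}$, where by (\ref{asymptotic arrival time outside cone}) the level set is graphical over the round cylinder; then $\Gamma$ contains the asymptotically cylindrical piece, so $\Gamma=\hat{\Sigma}_{\tau}$. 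Otherwise $\Gamma$ is a closed hypersurface contained in $\textrm{int}\,\hat{\Omega}_{t_0}$ (it avoids $\left\{ z=0\right\} $, and avoids $\hat{\Sigma}_{t_0}=\partial\hat{\Omega}_{t_0}\cap\left\{ z>0\right\} $ since $u=\tau\neq t_0$ there). Because $\left[t_0,0\right)$ contains no critical value of $u$ in $B_1^{n-1}\times\left(-1,1\right)$ by Remark \ref{regular value}, the level sets $\left\{ u=s\right\} $ foliate smoothly there and $\Gamma$ lies in a continuous family $\left\{ \Gamma_s\right\} _{s\in\left[t_0,\tau\right]}$ of closed components of $\left\{ u=s\right\} $, each distinct from $\hat{\Sigma}_s$; this family stays in $\textrm{int}\,\hat{\Omega}_{t_0}$, since it cannot touch $\hat{\Sigma}_{t_0}$ without merging with it (impossible at the non-critical value $t_0$) nor touch $\left\{ z=0\right\} $ (which, as above, would force $\Gamma_s=\hat{\Sigma}_s$). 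Hence $\Gamma_{t_0}\subseteq\textrm{int}\,\hat{\Omega}_{t_0}\cap\left\{ u=t_0\right\} $, and since $u\geq t_0$ on a neighborhood of $\Gamma_{t_0}$ inside $\hat{\Omega}_{t_0}$, every point of $\Gamma_{t_0}$ is an interior local minimum of $u$, contradicting Remark \ref{no local minimum}. So the second case cannot occur, and the identity is proved; the identical reasoning with $\hat{\Omega}_t$ (again a region bounded by $\hat{\Sigma}_t$ and $\left\{ z=0\right\} $ by Lemma \ref{characterization of path component}) in place of $\hat{\Omega}_{t_0}$ gives $\left\{ u=\tau\right\} \cap\hat{\Omega}_t=\hat{\Sigma}_{\tau}$ for $\tau\in\left[t,0\right)$.

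It remains to assemble the pieces. On the closed region $\hat{\Omega}_{t_0}$ (Lemma \ref{characterization of path component}) the function $u$ takes values in $\left[t_0,0\right]$, and $\left\{ u=0\right\} \cap\left(B_1^{n-1}\times\left(-1,1\right)\right)=\mathcal{S}$ by Remark \ref{regular value} and the maximality of $u\left(0\right)=0$; together with the identity just proved this gives $\hat{\Omega}_{t_0}=\left(\bigcup_{\tau\in\left[t_0,0\right)}\hat{\Sigma}_{\tau}\right)\cup\left(\mathcal{S}\cap\hat{\Omega}_{t_0}\right)$, and hence $\hat{\Omega}_{t_0}\cap\left\{ u\geq t\right\} =\left(\bigcup_{\tau\in\left[t,0\right)}\hat{\Sigma}_{\tau}\right)\cup\left(\mathcal{S}\cap\hat{\Omega}_{t_0}\right)$. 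Now each $p\in\mathcal{S}\cap\hat{\Omega}_{t_0}$ is a limit of regular points $p_i\in\hat{\Omega}_{t_0}$: if $p$ is interior this is clear since $\dim\mathcal{S}\leq n-2$, while the only point of $\mathcal{S}$ on $\partial\hat{\Omega}_{t_0}$ is the origin, which is approached through the truncated cone of Remark \ref{containment of truncated cone}. Then $u\left(p_i\right)\to u\left(p\right)=0$ with $u\left(p_i\right)<0$, so $u\left(p_i\right)\in\left[t,0\right)$ eventually and $p_i\in\hat{\Sigma}_{u\left(p_i\right)}$, whence $p\in\overline{\bigcup_{\tau\in\left[t,0\right)}\hat{\Sigma}_{\tau}}$. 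The analogous decomposition of the closed set $\hat{\Omega}_t$ then yields $\overline{\bigcup_{\tau\in\left[t,0\right)}\hat{\Sigma}_{\tau}}=\hat{\Omega}_t$, so $\hat{\Omega}_{t_0}\cap\left\{ u\geq t\right\} \subseteq\hat{\Omega}_t$. Combined with the first inclusion, $\hat{\Omega}_{t_0}\cap\left\{ u\geq t\right\} =\hat{\Omega}_t$; path-connectedness is immediate from Definition \ref{path component}.

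The step I expect to be the main obstacle is the identity $\left\{ u=\tau\right\} \cap\hat{\Omega}_{t_0}=\hat{\Sigma}_{\tau}$, and within it two delicate points: (i) that transporting $\Gamma$ along the level sets for $\tau\in\left[t_0,0\right)$ yields a family of mutually non-merging components — this rests on the type I hypothesis, which via Remark \ref{regular value} puts all of $\mathcal{S}$ on $\left\{ u=0\right\} $ so that no critical value is crossed; and (ii) the behaviour of the level sets near the boundary faces of the slab $B_1^{n-1}\times\left[0,z_*\right]$ — the faces $\left\{ \left|y\right|=1\right\} $ and $\left\{ z=z_*\right\} $ are harmless because $u<t_0$ there by (\ref{asymptotic arrival time outside cone}) and the choice $t_0>t_*$, whereas $\left\{ z=0\right\} $ is precisely where the asymptotically cylindrical description (\ref{asymptotic arrival time outside cone}) pins any touching component of $\left\{ u=\tau\right\} $ to $\hat{\Sigma}_{\tau}$.
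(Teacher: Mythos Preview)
Your argument is correct and follows the same backbone as the paper's proof --- assume an extra closed component $\Gamma$ of a level set sits inside $\hat{\Omega}_{t_0}$, push it backward through the levels to reach $\{u=t_0\}$, and extract a contradiction --- but several of the supporting steps are handled differently, and it is worth recording the differences.

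First, where the paper invokes the Haslhofer--Kleiner interior curvature estimate to obtain a \emph{uniform} step size $\delta$ and then iterates from $t_1$ down to $t_0$, you rely directly on the smooth foliation of the level sets for $s\in[t_0,\tau]$. This is legitimate: the type~I hypothesis gives $|\nabla u|\geq\beta^{-1}\sqrt{-\tau}$ on $\{t_0\leq u\leq\tau\}$, and together with the global $C^{1,1}$ bound on $u$ (Theorem~\ref{regularity}) this already controls the geometry of the level sets well enough to run the gradient flow $-\nabla u/|\nabla u|^{2}$ from $\Gamma_\tau$ all the way to $\Gamma_{t_0}$ without any appeal to \cite{HK}. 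You should, however, make this gradient lower bound explicit in your write-up; as it stands, the phrase ``the level sets foliate smoothly there'' hides exactly the estimate that makes the argument work.

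Second, your mechanism for keeping $\Gamma_s$ off the face $\{z=0\}$ is different from the paper's. The paper argues that a first touching point would force the unit normal of $\Gamma_s$ to be $(0,1)$, contradicting the asymptotically cylindrical behaviour of $\nabla u$ on $\{z=0\}$. You instead observe that any point of $\{u=s\}\cap\{z=0\}$ already lies on $\hat{\Sigma}_s$, so a touching would force the two components to coincide --- impossible since one is closed and the other has boundary. Both arguments are valid; yours is a little more global but perfectly sound once one notes (as above) that the backward flow is a diffeomorphism between level sets in $\{t_0\leq u<0\}$.

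Third, your endgame contradiction is more direct than the paper's. The paper shows $\Gamma_{t_0}\subset\partial\hat{\Omega}_{t_0}$, hence $\Gamma_{t_0}\subset\hat{\Sigma}_{t_0}$, and then runs a clopen argument to force $\Gamma_{t_0}=\hat{\Sigma}_{t_0}$. You instead note that $\Gamma_{t_0}$, being disjoint from both $\hat{\Sigma}_{t_0}$ and $\{z=0\}$, lies in $\mathrm{int}\,\hat{\Omega}_{t_0}\subset\{u\geq t_0\}$, so every point of $\Gamma_{t_0}$ is an interior local minimum of $u$ --- immediately contradicting Remark~\ref{no local minimum} (or, equivalently, the fact that $t_0$ is a regular value). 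This is a genuine simplification.

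Finally, your assembly step --- decomposing $\hat{\Omega}_{t_0}$ as $\bigl(\bigcup_{\tau}\hat{\Sigma}_{\tau}\bigr)\cup(\mathcal{S}\cap\hat{\Omega}_{t_0})$ and taking closures --- is not in the paper at all (the paper argues the proposition directly by contradiction), but it is correct and gives the slightly stronger intermediate identity $\{u=\tau\}\cap\hat{\Omega}_{t_0}=\hat{\Sigma}_\tau$ for every $\tau\in[t_0,0)$, which is a pleasant byproduct.
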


\begin{proof}
By Definition \ref{path component} we have $\hat{\Omega}_{t}\subset\,\left\{ u\geq t\right\} \cap\hat{\Omega}_{t_{0}}$
for every $t\in\left[t_{0},0\right)$. Suppose the contrary that there
exists $t_{1}\in\left(t_{0},0\right)$ such that 
\[
\hat{\Omega}_{t_{1}}\subsetneq\,\left\{ u\geq t_{1}\right\} \cap\hat{\Omega}_{t_{0}}.
\]
Then the first thing we would like to show is that
\[
\left\{ u=t_{1}\right\} \cap\hat{\Omega}_{t_{0}}\setminus\hat{\Omega}_{t_{1}}\neq\emptyset.
\]
To see that, choose $p\in\left\{ u\geq t_{1}\right\} \cap\hat{\Omega}_{t_{0}}\setminus\hat{\Omega}_{t_{1}}$.
If $u\left(p\right)=t_{1}$, then we are done. So let us assume that
$u\left(p\right)>t_{1}.$ By the path-connectedness of $\hat{\Omega}_{t_{0}}$,
there exists a continuous curve $x:\left[0,1\right]\rightarrow B_{1}^{n-1}\times\left[0,z_{*}\right]$
such that $x\left(0\right)=p$, $x\left(1\right)\in\hat{\Sigma}_{t_{0}}$,
and $x\left(s\right)\in\hat{\Omega}_{t_{0}}$ for every $s\in\left[0,1\right]$.
Since $u\left(x\left(0\right)\right)>t_{1}$ and $u\left(x\left(1\right)\right)=t_{0}$,
we have
\[
\mathring{s}=\sup\left\{ \sigma\in\left[0,1\right]:u\left(x\left(s\right)\right)\geq t_{1}\,\,\,\forall\,s\in\left[0,\sigma\right]\right\} \,\,\in\left(0,1\right)
\]
and $u\left(x\left(\mathring{s}\right)\right)=t_{1}$; in addition,
as $p=x\left(0\right)\notin\hat{\Omega}_{t_{1}}$ and $x\left(s\right)\in\left\{ u\geq t_{1}\right\} $
for every $s\in\left[0,\mathring{s}\right]$, we have $x\left(\mathring{s}\right)\notin\hat{\Omega}_{t_{1}}$.
Thus, we find $q=x\left(\mathring{s}\right)\in\left\{ u=t_{1}\right\} \cap\hat{\Omega}_{t_{0}}\setminus\hat{\Omega}_{t_{1}}$.

Next, by Remark \ref{structure of level set prior to singular time}
and Lemma \ref{characterization of path component}, there exists
a closed, path-connected, smoothly embedded, and strictly mean-convex
hypersurface $\Gamma_{t_{1}}$ contained in $\left\{ u=t_{1}\right\} \cap\left(B_{1}^{n-1}\times\left(0,z_{*}\right)\right)$
such that $\Gamma_{t_{1}}\cap\hat{\Omega}_{t_{0}}\setminus\hat{\Omega}_{t_{1}}\neq\emptyset$.
Due to the path-connectedness of $\Gamma_{t_{1}}$ and Definition
\ref{path component}, it is clear that $\Gamma_{t_{1}}\subset\hat{\Omega}_{t_{0}}\setminus\hat{\Omega}_{t_{1}}$.
By the argument in proving Lemma \ref{characterization of path component},
$\Gamma_{t_{1}}$ bounds an open connected set $\mathcal{D}_{t_{1}}$
contained in $\left\{ u\geq t_{1}\right\} \cap\left(B_{1}^{n-1}\times\left(0,z_{*}\right)\right)$,
i.e., $\Gamma_{t_{1}}=\partial\mathcal{D}_{t_{1}}$. It then follows
from the path-connectedness\footnote{The interior $\mathcal{D}_{t_{1}}$ is open connected and hence path-connected.
In a tubular neighborhood of the smoothly embedded hypersurface $\partial\mathcal{D}_{t_{1}}$
we can find a continuous path joining a given point on $\partial\mathcal{D}_{t_{1}}$
with a nearby point in $\mathcal{D}_{t_{1}}$. Thus, the closure $\overline{\mathcal{D}}_{t_{1}}$
is path-connected.} of $\overline{\mathcal{D}}_{t_{1}}=\mathcal{D}_{t_{1}}\cup\Gamma_{t_{1}}$
that $\overline{\mathcal{D}}_{t_{1}}\,\subset\hat{\Omega}_{t_{0}}\setminus\hat{\Omega}_{t_{1}}$. 

Due to the type I condition (see Definition \ref{type I singularity}),
on the set $\left\{ t_{0}\leq u\leq t_{1}\right\} \cap\left(B_{1}^{n-1}\times\left[0,z_{*}\right]\right)$
we have 
\[
\left|\nabla u\right|\geq\beta^{-1}\sqrt{-t_{1}}=\rho.
\]
Then by the interior\footnote{Considering the asymptotically cylindrical behavior of $u$ outside
the cone $\mathscr{C}_{\phi}$ and the choice of $z_{*}$, the set
$\left\{ t_{0}\leq u\leq t_{1}\right\} \cap\left(B_{1}^{n-1}\times\left[0,z_{*}\right]\right)$
is roughly contained in $\bar{B}_{\frac{1}{2}}^{n-1}\times\left[0,\frac{1}{2}\right]$.} curvature estimate in \cite{HK}, there exists $\delta>0$ (depending
on the dimension $n$, the scaling-invariant constant of noncollapsing,
the entropy of the flow, $\rho$, and $t_{0}-t_{*}$) so that in a
tubular neighborhood of $\Gamma_{t_{1}}$, the set $\Gamma_{t}=\left\{ u=t\right\} $
for $t\in\left[t_{1}-\delta,t_{1}\right]$ forms a strictly mean-convex
mean curvature flow of hypersurfaces in $B_{1}^{n-1}\times\left(-1,1\right)$
that is diffeomorphic to $\Gamma_{t_{1}}$. In view of the asymptotically
cylindrical behavior of $u$ outside the cone $\mathscr{C}_{\phi}$
and the choice of $z_{*}$, it is clear that $\Gamma_{t}\subset\left\{ u=t\right\} $
is away from $\partial B_{1}^{n-1}\times\left(-1,1\right)$ and stays
strictly below the hyperplane $z=z_{*}$ for every $t\in\left[t_{1}-\delta,t_{1}\right]$.
We would like to show that $\Gamma_{t}$ actually stays in $B_{1}^{n-1}\times\left(0,z_{*}\right)$
for $t\in\left[t_{2},t_{1}\right]$, where $t_{2}=\max\left\{ t_{1}-\delta,t_{0}\right\} $.
To this end, let 
\[
\mathring{t}=\inf\left\{ \tau\in\left[t_{2},t_{1}\right]:\Gamma_{t}\textrm{ is contained in }B_{1}^{n-1}\times\left(0,z_{*}\right)\,\,\forall\,t\in\left[\tau,t_{1}\right]\right\} 
\]
As $\Gamma_{t_{1}}$ is contained in $B_{1}^{n-1}\times\left(0,z_{*}\right)$,
it is clear that $\mathring{t}<t_{1}$. Were $\Gamma_{\mathring{t}}$
not contained in $B_{1}^{n-1}\times\left(0,z_{*}\right)$, it would
be that $\Gamma_{\mathring{t}}\subset B_{1}^{n-1}\times\left[0,z_{*}\right)$
with $\Gamma_{\mathring{t}}\cap\left\{ z=0\right\} \neq\emptyset$.
Choose $\mathring{x}\in\Gamma_{\mathring{t}}\cap\left\{ z=0\right\} $.
Note that $\mathring{x}\neq0$ because $u\left(x\right)=\mathring{t}$.
Since 
\[
z\left(\mathring{x}\right)=0=\min_{\Gamma_{\mathring{t}}}z,
\]
we obtain that $\nabla z\left(\mathring{x}\right)=\left(0,1\right)$
is a normal vector of $\Gamma_{\mathring{t}}\subset\left\{ u=\mathring{t}\right\} $
at $\mathring{x}$ and hence it must be parallel to $\nabla u\left(\mathring{x}\right)$;
a contradiction follows immediately from the asymptotically cylindrical
behavior of $u$ on the hyperplane $z=0$. Therefore, $\Gamma_{\mathring{t}}$
is contained in $B_{1}^{n-1}\times\left(0,z_{*}\right)$ as well;
it follows that $\mathring{t}=t_{2}$.

Furthermore, arguing as in the proof of Lemma \ref{characterization of path component},
for every $t\in\left[t_{2},t_{1}\right]$, $\Gamma_{t}$ bounds an
open connected set $\mathcal{D}_{t}$ contained in $\left\{ u\geq t\right\} \cap\left(B_{1}^{n-1}\times\left(0,z_{*}\right)\right)$
with $\mathcal{\overline{D}}_{t'}\supset\mathcal{\overline{D}}_{t}$
whenever $t_{2}\leq t'<t\leq t_{1}$. As $\mathcal{D}_{t_{1}}\,\subset\hat{\Omega}_{t_{0}}$
and that
\[
\mathcal{\overline{D}}_{t_{2}}=\underset{t\in\left[t_{2},t_{1}\right]}{\cup}\Gamma_{t}\cup\mathcal{D}_{t_{1}},
\]
is path-connected, we have $\mathcal{\overline{D}}_{t_{2}}\subset\hat{\Omega}_{t_{0}}$. 

If $t_{2}=t_{0}$, then we have found an open connected set $\mathcal{D}_{t_{0}}=\mathcal{D}_{t_{2}}$
that is contained in $\hat{\Omega}_{t_{0}}$ whose boundary $\Gamma_{t_{0}}$
is a closed, path-connected, and smoothly embedded hypersurface contained
$\left\{ u=t_{0}\right\} \cap\hat{\Omega}_{t_{0}}\cap\left(B_{1}^{n-1}\times\left(0,z_{*}\right)\right)$.
Else, $t_{2}>t_{0}$, then we will continue the aforementioned process.
As the constant $\delta$ is chosen so that it works for every following
step, so we can set $t_{i+1}=t_{i}-\delta$ for $i=2,3,\cdots$ until
$t_{m-1}\in\left(t_{0},t_{0}+\delta\right)$ for some $m\in\mathbb{N}$
and then we set $t_{m}=t_{0}$. In that case we will have 
\[
\hat{\Omega}_{t_{0}}\supset\overline{\mathcal{D}}_{t_{0}}=\mathcal{\overline{D}}_{t_{m}}\supset\cdots\supset\mathcal{\overline{D}}_{t_{2}}\supset\mathcal{\overline{D}}_{t_{1}},
\]
where each $\mathcal{D}_{t_{i}}$ is an open connected set in $\left\{ u\geq t_{i}\right\} \cap\left(B_{1}^{n-1}\times\left(0,z_{*}\right)\right)$
whose boundary $\Gamma_{t_{i}}$ is a closed, path-connected, and
smoothly embedded hypersurface contained $\left\{ u=t_{i}\right\} \cap\left(B_{1}^{n-1}\times\left(0,z_{*}\right)\right)$.
In particular, we obtain an open connected set $\mathcal{D}_{t_{0}}=\mathcal{D}_{t_{m}}$
contained in $\hat{\Omega}_{t_{0}}$ whose boundary $\Gamma_{t_{0}}$
is a path-connected smooth closed hypersurface contained $\left\{ u=t_{0}\right\} \cap\hat{\Omega}_{t_{0}}\cap\left(B_{1}^{n-1}\times\left(0,z_{*}\right)\right)$.

By Remark \ref{regular value} we have $\Gamma_{t_{0}}\subset\partial\hat{\Omega}_{t_{0}}$.
It follows from Lemma \ref{characterization of path component} that
$\Gamma_{t_{0}}\subset\hat{\Sigma}_{t_{0}}$. We claim that $\Gamma_{t_{0}}$
is relatively clopen in $\hat{\Sigma}_{t_{0}}$; therefore, by the
connectedness of $\hat{\Sigma}_{t_{0}}$ we have $\Gamma_{t_{0}}=\hat{\Sigma}_{t_{0}}$,
giving a contradiction since $\Gamma_{t_{0}}\subset B_{1}^{n-1}\times\left(0,z_{*}\right)$
while $\hat{\Sigma}_{t_{0}}\cap\left\{ z=0\right\} \neq\emptyset$.
To prove the claim, note first that $\Gamma_{t_{0}}$ is compact and
hence it must be relatively closed in $\hat{\Sigma}_{t_{0}}$. To
verify the relatively openness, let us fix $x_{0}\in\Gamma_{t_{0}}$.
By Remark \ref{regular value} there exists $r>0$ so that $B_{r}\left(x_{0}\right)\subset B_{1}^{n-1}\times\left(0,z_{*}\right)$
and that $B_{r}\left(x_{0}\right)\setminus\Gamma_{t_{0}}$ is separated
into two open connected sets: one is contained in $\left\{ u>t_{0}\right\} $
and the other is contained in $\left\{ u<t_{0}\right\} $. As a result,
we have 
\[
\hat{\Sigma}_{t_{0}}\cap B_{r}\left(x_{0}\right)\,\subset\,\left\{ u=t_{0}\right\} \cap B_{r}\left(x_{0}\right)\,\subset\,\Gamma_{t_{0}}\cap B_{r}\left(x_{0}\right);
\]
that is, there is a neighborhood of $x_{0}$ in $\hat{\Sigma}_{t_{0}}$
that is contained in $\Gamma_{t_{0}}$.
\end{proof}
We are about to prove Proposition \ref{isolated singularity in the upper-half },
which says that $0$ is the only singular point in $B_{1}^{n-1}\times\left[0,z_{0}\right]$
for some $z_{0}>0$. The proof relies on the following lemma: 
\begin{lem}
\label{gradient at level maximum point }For every $z\in\left(0,\hat{z}_{t_{0}}\right)$
\footnote{See Remark \ref{containment of truncated cone} for the definition
of $\hat{z}_{t_{0}}$.}such that 
\[
u\left(y,z\right)=u_{\textrm{max}}\left(z\right)\in\left[t_{0},0\right),
\]
where $y\in B_{z\tan\phi}^{n-1}$ is any maximum point of $u$ on
level $z$ (see (\ref{u_max})),\footnote{Note that $\nabla u\left(y,z\right)\neq0$ by the type I condition
(see Definition \ref{type I singularity}) and that it must be parallel
to $\left(0,1\right)$ as $\left(y,z\right)$ is a maximum point of
$u$ on level $z$.} there holds 
\[
\nabla u\left(y,z\right)\cdot\left(0,1\right)<0.
\]
\end{lem}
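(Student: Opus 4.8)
The plan is to argue by contradiction. Write $t=u_{\textrm{max}}(z)\in[t_{0},0)$ and suppose, contrary to the claim, that $c:=\nabla u(y,z)\cdot(0,1)\geq0$. Since $u(y,z)=t<0$, the point $(y,z)$ is a regular point (Remark \ref{regular value}), and being a maximizer of $u$ on the slice $\{z'=z\}$ it has vertical gradient, so $\nabla u(y,z)=(0,c)$; the type I condition (Definition \ref{type I singularity}) forces $\nabla u(y,z)\neq0$, so in fact $c=|\nabla u(y,z)|>0$ and the inward unit normal is $N(y,z)=(0,1)$.

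The next step is to place $(y,z)$ on $\hat{\Sigma}_{t}$ and read off the local geometry. Because $|y|<z\tan\phi$ and $0<z<\hat{z}_{t_{0}}$, we have $(y,z)\in\mathscr{C}_{\phi}^{*}\cap\{z'<\hat{z}_{t_{0}}\}\subset\hat{\Omega}_{t_{0}}$ (Remark \ref{containment of truncated cone}); since $u(y,z)=t\geq t_{0}$, Proposition \ref{path-connected super level set} gives $(y,z)\in\{u\geq t\}\cap\hat{\Omega}_{t_{0}}=\hat{\Omega}_{t}$, and as $t$ is a regular value of $u$ in $B_{1}^{n-1}\times[0,z_{*}]$ (Remark \ref{structure of level set prior to singular time}) with $z>0$, Lemma \ref{characterization of path component} shows $(y,z)\in\hat{\Sigma}_{t}$. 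A small ball about the regular point $(y,z)\in\hat{\Omega}_{t_{0}}$ lies in $\hat{\Omega}_{t_{0}}$, so locally $\hat{\Omega}_{t}=\{u\geq t\}\cap B_{\rho}(y,z)$ lies on the upper side of $\hat{\Sigma}_{t}$; writing $\hat{\Sigma}_{t}$ near $(y,z)$ as a graph $z'=f(y')$ with $\nabla f(y)=0$, the slice-maximality $u(\cdot,z)\leq t$ forces $f\geq f(y)=z$. Thus $(y,z)$ is an interior local minimum of the height function on the strictly mean-convex hypersurface $\hat{\Sigma}_{t}$ --- a ``bowl'' --- and $\hat{\Omega}_{t}$ reaches heights strictly above $z$.

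It remains to derive a contradiction from this bowl, which I expect to be the main obstacle. The idea is to play the noncollapsing estimate against the cone geometry. On the one hand, the asymptotically cylindrical estimates (\ref{asymptotic arrival time outside cone})--(\ref{arrival time on boundary of cone}) give $u(\cdot,z)<t$ outside the cone on the slice $\{z'=z\}$, hence $z\tan\phi\geq\sqrt{2(n-2)(-t)}$ up to an error controlled by $\epsilon$, and more generally confine $\hat{\Omega}_{t}\cap\{z'=h\}$ to the narrow cross-section $\{|y'|\leq h\tan\phi\}$ for every height $h\geq z$. On the other hand, using $H(y,z)=|\nabla u(y,z)|^{-1}$ (see (\ref{reformulated level set flow})) together with the type I lower bound $|\nabla u(y,z)|\geq|t|^{1/2}/\beta$, the noncollapsing (interior) estimate of \cite{HK} applied at $(y,z)$ produces a ball $B_{r}((y,z+r))\subset\hat{\Omega}_{t}$ tangent to $\{z'=z\}$ at $(y,z)$ with $r$ bounded below by a fixed positive multiple of $|t|^{1/2}/\beta$; a cross-section of this ball at a suitable height $h\in(z,z+2r)$ then cannot be contained in $\{|y'|\leq h\tan\phi\}$ once $\tan\phi$ is small, contradicting the previous sentence. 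Making this precise is delicate: one has to (i) extract the cylindrical comparison with an explicit $\epsilon$-dependence of the errors, and (ii) fix $\phi$ in terms of $n$, $\beta$, and the noncollapsing constant (this is the role of (\ref{choice of phi})) so that the argument is uniform over all admissible $z\in(0,\hat{z}_{t_{0}})$; when $\hat{\Omega}_{t}$ is much taller than $\sqrt{-t}/\tan\phi$ near a non-regular cylindrical singularity one should instead run the ball-versus-cone comparison at a height infinitesimally above $z$ and/or exploit the connectedness and strict mean-convexity of $\hat{\Sigma}_{t}$ to exclude the bowl directly.
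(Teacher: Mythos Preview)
Your setup in the first two paragraphs is correct and overlaps with the paper's argument: the contradiction hypothesis gives $\nabla u(y,z)=(0,c)$ with $c>0$, and you correctly invoke Proposition~\ref{path-connected super level set} to place $(y,z)$ in $\hat{\Omega}_t$. However, the contradiction mechanism in your third paragraph has a genuine gap. The noncollapsing ball $B_r((y,z+r))$ has radius $r$ bounded below by a fixed multiple of $\sqrt{-t}$, while the cone cross-section at heights $h\ge z$ has radius $h\tan\phi\ge z\tan\phi$; but \emph{there is no upper bound on $z\tan\phi/\sqrt{-t}$}. The slice-maximizer could sit deep inside a wide cone cross-section (for instance $y\approx 0$ with $z\gg\sqrt{-t}/\tan\phi$), and then the inscribed ball fits comfortably inside the cone at every height, so no contradiction arises. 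Your own caveat (``when $\hat{\Omega}_t$ is much taller than $\sqrt{-t}/\tan\phi$\ldots'') concedes exactly this case, and the alternatives you sketch there are not developed enough to close it.

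The paper's proof finishes with a much simpler, purely topological step using the very proposition you already cited---but applied at a \emph{strictly higher} level. Since $\nabla u(y,z)=(0,c)$ with $c>0$, moving slightly upward produces a point $(y,z+\delta)$, still inside the truncated cone (hence in $\hat{\Omega}_{t_0}$ by Remark~\ref{containment of truncated cone}), with $t':=u(y,z+\delta)\in(t,0)$. Now apply Proposition~\ref{path-connected super level set} at level $t'$: the set $\hat{\Omega}_{t'}=\{u\ge t'\}\cap\hat{\Omega}_{t_0}$ is path-connected and contains both $(y,z+\delta)$ and the origin. Any continuous path in $\hat{\Omega}_{t'}$ joining these two points must, by the intermediate value theorem on the $z$-coordinate, cross the slice $\{z'=z\}$ at some point where $u\ge t'>t=u_{\max}(z)$, which is the desired contradiction. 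No noncollapsing, no ball-versus-cone comparison, and no quantitative choice of $\phi$ is needed for this lemma; the choice~(\ref{choice of phi}) enters only later, in the final step of Section~\ref{neckpinch singularities}.
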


\begin{proof}
Suppose the contrary that there exists $\mathring{z}\in\left(0,\hat{z}_{t_{0}}\right)$
and $y\in B_{\mathring{z}\tan\phi}^{n-1}$ such that $u\left(\mathring{y},\mathring{z}\right)=u_{\textrm{max}}\left(\mathring{z}\right)\in\left[t_{0},0\right)$
and $\nabla u\left(\mathring{y},\mathring{z}\right)=\left(0,\lambda\right)$
for some $\lambda>0$. Choose $\delta\in\left(0,\hat{z}_{t_{0}}-\mathring{z}\right)$
such that 
\[
t_{0}\leq u\left(\mathring{y},\mathring{z}\right)<u\left(\mathring{y},\mathring{z}+\delta\right)<0.
\]
Since $\mathring{z}+\delta<\hat{z}_{t_{0}}$ and $\mathring{y}<\left(\mathring{z}+\delta\right)\tan\phi$,
by Remark \ref{containment of truncated cone} $\left(\mathring{y},\mathring{z}+\delta\right)$
belongs to $\left\{ u\geq u\left(\mathring{y},\mathring{z}+\delta\right)\right\} \cap\hat{\Omega}_{t_{0}}$.
Then it follows from Proposition \ref{path-connected super level set}
that there exists a continuous curve $x:\left[0,1\right]\rightarrow B_{1}^{n-1}\times\left[0,z_{*}\right]$
so that $x\left(0\right)=\left(\mathring{y},\mathring{z}+\delta\right)$,
$x\left(1\right)=0$, and 
\[
x\left(s\right)\in\left\{ u\geq u\left(\mathring{y},\mathring{z}+\delta\right)\right\} \cap\hat{\Omega}_{t_{0}}\quad\forall\,s\in\left[0,1\right].
\]
Since $z\left(x\left(s\right)\right)$ is a continuous function with
$z\left(x\left(0\right)\right)=\mathring{z}+\delta$ and $z\left(x\left(1\right)\right)=0$,
by the intermediate value theorem there exists $\mathring{s}\in\left(0,1\right)$
so that 
\[
z\left(x\left(\mathring{s}\right)\right)=\mathring{z}.
\]
By the choice of the continuous path, we obtain 
\[
u\left(x\left(\mathring{s}\right)\right)\geq u\left(\mathring{y},\mathring{z}+\delta\right)>u\left(\mathring{y},\mathring{z}\right),
\]
contradicting the assumption that $u_{\textrm{max}}\left(\mathring{z}\right)=u\left(\mathring{y},\mathring{z}\right)$.
\end{proof}
\begin{prop}
\label{isolated singularity in the upper-half }There exists $z_{0}\in\left(0,\hat{z}_{t_{0}}\right)$,
where $\hat{z}_{t_{0}}$ is the constant in Lemma \ref{gradient at level maximum point },
so that the function $u_{\max}$ is nonincreasing on $\left[0,z_{0}\right]$
with $u_{\max}\left(z\right)<0$ for every $z\in\left(0,z_{0}\right]$. 

It follows from Remark \ref{characterization of regular point} and
the asymptotically cylindrical behavior of the level set flow on the
hyperplane $z=0$ that the point $0$ is the only singular point in
$B_{1}^{n-1}\times\left[0,z_{0}\right]$.
\end{prop}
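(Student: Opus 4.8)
The plan is to reduce the whole statement to the behavior of the one-variable function $u_{\textrm{max}}$ defined in (\ref{u_max}): I will show that $u_{\textrm{max}}$ is nonincreasing and, away from $0$, strictly negative on a small interval $[0,z^{\dagger})$, and then the statement about singular points will follow from Remark \ref{characterization of regular point} together with (\ref{asymptotic arrival time outside cone}). As preliminaries, note that $u_{\textrm{max}}$ is Lipschitz on $[0,z_{*}]$ with constant $\left\Vert \nabla u\right\Vert _{L^{\infty}}$ (finite by Theorem \ref{regularity}), hence absolutely continuous, and that $u_{\textrm{max}}\leq0$ with $u_{\textrm{max}}(0)=0$ because $u(0)=0$ is the maximum of $u$ on $B_{1}^{n-1}\times(-1,1)$. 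Let $z^{\dagger}\in(0,\hat{z}_{t_{0}}]$ be the largest number with $u_{\textrm{max}}>t_{0}$ throughout $[0,z^{\dagger})$; it is positive since $u_{\textrm{max}}$ is continuous and $u_{\textrm{max}}(0)=0>t_{0}$.

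The core step is to prove that $u_{\textrm{max}}$ is nonincreasing on $[0,z^{\dagger})$, and this is where Lemma \ref{gradient at level maximum point } enters, through a Danskin-type (envelope) argument. At almost every $z\in(0,z^{\dagger})$ the function $u_{\textrm{max}}$ is differentiable, and I claim $u_{\textrm{max}}'(z)\leq0$ at such $z$. If $u_{\textrm{max}}(z)=0$ this is immediate since $0$ is the maximum of $u_{\textrm{max}}$. If $u_{\textrm{max}}(z)<0$, pick a level-$z$ maximizer $y$; by the second equality in (\ref{u_max}) it lies strictly inside the cone, so $\nabla_{y}u(y,z)=0$ and $\nabla u(y,z)$ is parallel to $(0,1)$, while $u_{\textrm{max}}(z)=u(y,z)\in(t_{0},0)$ and $z\in(0,\hat{z}_{t_{0}})$, so Lemma \ref{gradient at level maximum point } applies and gives $\partial_{z}u(y,z)<0$. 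From the trivial bound $u_{\textrm{max}}(z+h)\geq u(y,z+h)$ and the $C^{1,1}$ Taylor estimate $u(y,z+h)=u_{\textrm{max}}(z)+h\,\partial_{z}u(y,z)+O(h^{2})$ (uniform in $y$), the two one-sided difference quotients of $u_{\textrm{max}}$ at $z$ pinch from both sides to $\partial_{z}u(y,z)$, so $u_{\textrm{max}}'(z)=\partial_{z}u(y,z)<0$. Since $u_{\textrm{max}}$ is absolutely continuous, $u_{\textrm{max}}'\leq0$ almost everywhere yields that $u_{\textrm{max}}$ is nonincreasing on $(0,z^{\dagger})$, hence on $[0,z^{\dagger})$.

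Next I would invoke Lemma \ref{dashed singular set}, which supplies $z_{i}\searrow0$ with $u_{\textrm{max}}(z_{i})<0$; for any $z\in(0,z^{\dagger})$, choosing $z_{i}<z$ and using monotonicity gives $u_{\textrm{max}}(z)\leq u_{\textrm{max}}(z_{i})<0$, so $u_{\textrm{max}}<0$ on all of $(0,z^{\dagger})$. Fixing any $z_{0}\in(0,z^{\dagger})$ then proves the first assertion. For the second: by Theorem \ref{regularity} a singular point is a critical point of $u$, and by Remark \ref{characterization of regular point} every singular point $x=(y,z)$ in $B_{1}^{n-1}\times[0,z_{0}]$ satisfies $u(x)=0$; this is impossible when $z\in(0,z_{0}]$ since $u(y,z)\leq u_{\textrm{max}}(z)<0$, so $z=0$, and on the hyperplane $\{z=0\}$ the asymptotically cylindrical behavior (\ref{asymptotic arrival time outside cone}) (the cone degenerating there to $\{y=0\}$) forces $u(y,0)<0$ for $0<\left|y\right|<1$. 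Hence $0$ is the only singular point in $B_{1}^{n-1}\times[0,z_{0}]$.

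I expect the differentiation of $u_{\textrm{max}}$ in the second step to be the main obstacle: $u_{\textrm{max}}$ is only Lipschitz and the level-$z$ maximizer need not be unique, so one cannot simply differentiate, and it is precisely the Danskin-type envelope argument---controlling both one-sided difference quotients by $\partial_{z}u$ evaluated at a single maximizer, using only $u_{\textrm{max}}(z+h)\geq u(y,z+h)$ and the $C^{1,1}$ Taylor bound---that converts the pointwise gradient sign in Lemma \ref{gradient at level maximum point } into monotonicity of $u_{\textrm{max}}$. The remaining ingredients (continuity and Lipschitzness of $u_{\textrm{max}}$, the passage from the almost-everywhere sign of the derivative to monotonicity, and the reading-off of the singular-set statement) are routine.
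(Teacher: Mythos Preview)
Your proposal is correct and follows essentially the same route as the paper: Lipschitz continuity of $u_{\max}$, an envelope/difference-quotient argument combining Lemma~\ref{gradient at level maximum point } with the a.e.\ differentiability of $u_{\max}$ to get $u_{\max}'\leq 0$, the fundamental theorem of calculus for absolutely continuous functions, and then Lemma~\ref{dashed singular set} for strict negativity. The only cosmetic differences are that the paper fixes $z_{0}$ directly from continuity (rather than via your $z^{\dagger}$) and uses only the one-sided inequality $u_{\max}'(z)\leq\partial_{z}u(y,z)$, whereas your Danskin argument pins down equality; both are fine.
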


\begin{proof}
Firstly we would like to show that $u_{\max}$ is a Lipschitz continuous
on $\left[0,z_{*}\right]$; in particular, it is absolutely continuous.
To see that, given $z$ and $\tilde{z}$ in $\left[0,z_{*}\right]$,
choose $y$ and $\tilde{y}$ in $B_{1}^{n-1}$ so that $u_{\max}\left(z\right)=u\left(y,z\right)$
and $u_{\max}\left(\tilde{z}\right)=u\left(\tilde{y},\tilde{z}\right)$
(see (\ref{u_max})). Then we have 
\[
u_{\max}\left(z\right)-u_{\max}\left(\tilde{z}\right)\leq u\left(y,z\right)-u\left(y,\tilde{z}\right)\leq\left\Vert \nabla u\right\Vert _{L^{\infty}\left(B_{1}^{n-1}\times\left[0,z_{*}\right]\right)}\left|z-\tilde{z}\right|,
\]
\[
u_{\max}\left(z\right)-u_{\max}\left(\tilde{z}\right)\geq u\left(\tilde{y},z\right)-u\left(\tilde{y},\tilde{z}\right)\geq-\left\Vert \nabla u\right\Vert _{L^{\infty}\left(B_{1}^{n-1}\times\left[0,z_{*}\right]\right)}\left|z-\tilde{z}\right|.
\]
Particularly, since $u_{\max}\left(0\right)=0$ and that $0$ is the
maximum value of $u_{\max}$ on $\left[0,z_{*}\right]$, there exists
$z_{0}\in\left(0,\hat{z}_{t_{0}}\right)$ such that 
\begin{equation}
t_{0}\leq u_{\max}\left(z\right)\leq0\label{isolated singularity in the upper-half: z_0}
\end{equation}
for every $z\in\left[0,z_{0}\right]$. 

Note that an absolutely continuous function is differentiable almost
everywhere. Assume that $u_{\max}$ is differentiable at $z\in\left(0,z_{0}\right)$.
Let us choose $y\in B_{z\tan\phi}^{n-1}$ so that $u_{\max}\left(z\right)=u\left(y,z\right)$.
Then either $u_{\max}\left(z\right)=0$, in which case we have $u'_{\max}\left(z\right)=0$
as $0$ is the maximum value of $u_{\max}$ on $\left[0,z_{*}\right]$;
or $u_{\max}\left(z\right)\in\left[t_{0},0\right)$, in which case
Lemma \ref{gradient at level maximum point } gives $\nabla u\left(y,z\right)\cdot\left(0,1\right)<0$,
yielding 
\[
u'_{\max}\left(z\right)=\lim_{h\nearrow0}\frac{u_{\max}\left(z\right)-u_{\max}\left(z-h\right)}{h}\leq\lim_{h\nearrow0}\frac{u\left(y,z\right)-u\left(y,z-h\right)}{h}
\]
\[
=\nabla u\left(y,z\right)\cdot\left(0,1\right)<0.
\]
Thus, $u'_{\max}\leq0$ almost everywhere.

It follows from the fundamental theorem of calculus (for absolutely
continuous functions) that for every $0\leq z<\tilde{z}\leq z_{0}$
we have
\[
u_{\max}\left(\tilde{z}\right)-u_{\max}\left(z\right)=\int_{z}^{z'}u'_{\max}\left(\zeta\right)d\zeta\,\leq0.
\]
Thus, $u_{\max}$ is nonincreasing on $\left[0,z_{0}\right]$. Moreover,
by Lemma \ref{dashed singular set} there exists a sequence $z_{i}\in\left[0,z_{0}\right]$
tending to $0$ with $u_{\max}\left(z_{i}\right)<0$. Therefore, we
have 
\[
u_{\max}\left(z\right)\leq u_{\max}\left(z_{i}\right)<0\quad\forall\,z\in\left[z_{i},z_{0}\right].
\]
As $z_{i}\rightarrow0$, we obtain $u_{\max}\left(z\right)<0$ for
every $z\in\left(0,z_{0}\right]$.
\end{proof}
For every $p\in B_{1}^{n-1}\times\left[0,z_{0}\right]$ \footnote{The positive constant $z_{0}$ is chosen in Proposition \ref{isolated singularity in the upper-half }.}with
$p\neq0$,\footnote{So that $p$ is not a critical point by Proposition \ref{isolated singularity in the upper-half }.}
by the argument in Proposition \ref{saddle point} \footnote{That is, as $\nabla u$ is Lipschitz continuous by Theorem \ref{regularity},
the existence and uniqueness theorem of ODE yields a unique flow line
of the vector field $\nabla u$ passing through the non-stationary
point $p$. The flow line is smooth owing to Theorem \ref{regularity}
and the regularity theory in ODE. After a reparametrization by the
arc length, we then get the desired curve.}there exists a smooth curve $x\left(s\right)=\Psi\left(p,s\right)$
in $B_{1}^{n-1}\times\left[0,z_{0}\right]$ satisfying 
\[
\frac{d}{ds}x=N\left(x\right)=\frac{\nabla u\left(x\right)}{\left|\nabla u\left(x\right)\right|}
\]
with $x\left(0\right)=\Psi\left(p,0\right)=p$. In fact, $s\mapsto\Psi\left(p,s\right)$
is the unique integral curve of the unit normal vector field $N=\frac{\nabla u}{\left|\nabla u\right|}$
that starting at $p$. Let
\[
\mathscr{C}_{\phi}^{0}=\mathscr{C}_{\phi}\cap\left\{ 0\leq z\leq z_{0}\right\} =\left\{ \left|y\right|\leq z\,\tan\phi\right\} \cap\left\{ 0\leq z\leq z_{0}\right\} .
\]
What follows is showing that if the initial point sits in the cone
$\mathscr{C}_{\phi}^{0}$ with its $u$ value at least $u_{\max}\left(z_{0}\right)$,
then the associated integral curve stays in the cone $\mathscr{C}_{\phi}^{0}$
and converges to $0$.
\begin{prop}
\label{arc length of flow line}For every $p\in\mathscr{C}_{\phi}^{0}$
such that 
\[
u\left(p\right)\in\left[u_{\max}\left(z_{0}\right),0\right),
\]
\footnote{Note that by Proposition \ref{isolated singularity in the upper-half }
we have $u_{\max}\left(z_{0}\right)<0$. Also, the condition that
$u\left(p\right)<0$ implies that $p\neq0$.}the integral curve $s\mapsto\Psi\left(p,s\right)$ of $N=\frac{\nabla u}{\left|\nabla u\right|}$
starting at $p$ stays in $\mathscr{C}_{\phi}^{0}$ and tends to $0$
in finite distance. Moreover, the arc length is bounded above by $2\beta\sqrt{-u\left(p\right)}$.\footnote{The constant $\beta$ is from the type I condition, see Definition
\ref{type I singularity}.}
\end{prop}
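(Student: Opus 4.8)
The plan is to carry out the standard \L ojasiewicz gradient-flow-line argument; the only nonroutine ingredient is the confinement of the integral curve to $\mathscr{C}_{\phi}^{0}$, which will be read off from the asymptotically cylindrical behavior of $u$ outside $\mathscr{C}_{\phi}$ (see (\ref{asymptotic arrival time outside cone}) and Definition \ref{cylindrical scale}). Write $x\left(s\right)=\Psi\left(p,s\right)$ and let $S$ be the supremum of those $\sigma>0$ for which $x$ is defined on $\left[0,\sigma\right]$ with $x\left(\left[0,\sigma\right]\right)\subset\mathscr{C}_{\phi}^{0}$; since $\mathscr{C}_{\phi}^{0}\subset B_{1}^{n-1}\times\left(-1,1\right)$, all the standing hypotheses are available along $x$. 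The chain rule and $\left|N\right|=1$ give $\frac{d}{ds}\left[u\left(x\right)\right]=\nabla u\left(x\right)\cdot N\left(x\right)=\left|\nabla u\left(x\right)\right|$, so $u\circ x$ is strictly increasing; as $u\left(p\right)<0$ and, by Proposition \ref{isolated singularity in the upper-half }, $0$ is the only critical point of $u$ in $B_{1}^{n-1}\times\left[0,z_{0}\right]$, the curve $x$ consists of regular points with $u\left(x\left(s\right)\right)<0$ for as long as it remains in $\mathscr{C}_{\phi}^{0}$. Feeding the \L ojasiewicz inequality $\left|u\right|^{\frac{1}{2}}\leq\beta\left|\nabla u\right|$ into the previous identity gives $\frac{d}{ds}\sqrt{-u\left(x\left(s\right)\right)}\leq-\frac{1}{2\beta}$, hence
\[
\sqrt{-u\left(x\left(s\right)\right)}\leq\sqrt{-u\left(p\right)}-\frac{s}{2\beta}\qquad\textrm{for all }0\leq s<S .
\]
As the left-hand side is positive, $s<2\beta\sqrt{-u\left(p\right)}$ for every $s<S$, so $S\leq2\beta\sqrt{-u\left(p\right)}<\infty$. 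Since $x$ is parametrized by arc length and $S<\infty$, the limit $q=\lim_{s\nearrow S}x\left(s\right)$ exists and lies in the compact set $\mathscr{C}_{\phi}^{0}$, and the arc length of the curve equals $S$.

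The heart of the matter is to show $q=0$. Suppose not; then $q$ lies either (A) on the top face $\left\{z=z_{0}\right\}$ of $\mathscr{C}_{\phi}^{0}$, (B) on the lateral boundary with $0<z\left(q\right)<z_{0}$, or (C) in the interior of $\mathscr{C}_{\phi}^{0}$. In case (C), $u\left(q\right)<0$, so $q$ is a regular point and $x$ can be continued past $S$ while remaining in $\mathscr{C}_{\phi}^{0}$, contradicting the maximality of $S$. In case (B), put $h\left(y,z\right)=\left|y\right|^{2}-z^{2}\tan^{2}\phi$, so $\mathscr{C}_{\phi}=\left\{h\leq0\right\}$; outside $\mathscr{C}_{\phi}$ one has $u\approx\frac{-1}{2\left(n-2\right)}\left|y\right|^{2}$ by (\ref{asymptotic arrival time outside cone}), and then the $\left(\phi,\epsilon\right)$-asymptotically cylindrical estimate (Definition \ref{cylindrical scale}) makes $N$ $\epsilon$-close in direction to $\left(-\frac{y}{\left|y\right|},0\right)$ along the lateral boundary, so that $N\cdot\nabla h<0$ there once $\epsilon$ is small relative to $\tan\phi$; hence $h\circ x$ is strictly decreasing at $s=S$, giving $h\left(x\left(s\right)\right)>0$ for $s$ slightly below $S$ and contradicting $h\left(x\left(\cdot\right)\right)\leq0$ on $\left[0,S\right)$. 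In case (A), if $S>0$ then $u\left(q\right)=\lim\left(u\circ x\right)>u\left(p\right)\geq u_{\max}\left(z_{0}\right)\geq u\left(q\right)$, a contradiction; while $S=0$ would make $q=p$ a maximum point of $u$ on the level $z_{0}\in\left(0,\hat{z}_{t_{0}}\right)$ with $u\left(p\right)=u_{\max}\left(z_{0}\right)\in\left[t_{0},0\right)$ (by (\ref{isolated singularity in the upper-half: z_0}) and Proposition \ref{isolated singularity in the upper-half }), and $\left|y\left(p\right)\right|<z_{0}\tan\phi$ by (\ref{u_max}), so Lemma \ref{gradient at level maximum point } gives $\nabla u\left(p\right)\cdot\left(0,1\right)<0$ and $x$ immediately enters $\left\{0<z<z_{0}\right\}$, contradicting $S=0$. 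Therefore $q=0$: the integral curve stays in $\mathscr{C}_{\phi}^{0}$, converges to $0$, and has finite arc length $S\leq2\beta\sqrt{-u\left(p\right)}$.

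I expect case (B) to be the main obstacle: one must be certain that the asymptotically cylindrical control on the \emph{direction} of $\nabla u$ stays effective uniformly along the lateral boundary, arbitrarily close to the apex. This is guaranteed because Definition \ref{cylindrical scale} is invariant under parabolic rescaling, so the $\epsilon$-closeness of the rescaled level sets---and hence the near-radiality of $N$---holds uniformly on $\mathscr{C}_{\phi}\setminus\left\{0\right\}$; this is also where one uses that $\epsilon$ has been chosen small depending on the choice of $\phi$.
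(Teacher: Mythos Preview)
Your argument is correct and follows essentially the same route as the paper's proof: the \L ojasiewicz inequality yields the arc-length bound $2\beta\sqrt{-u(p)}$, and the confinement to $\mathscr{C}_{\phi}^{0}$ is verified by checking that on each boundary face the normalized gradient $N$ points into the cone (using the asymptotically cylindrical behavior on the lateral boundary and Lemma~\ref{gradient at level maximum point } on the top). The only difference is organizational: the paper first proves $\mathring{s}>0$ by a case analysis on the \emph{initial} point $p$, then argues via the semigroup property that the limit $q\neq0$ would also flow inward; you instead run the \L ojasiewicz bound first and fold the initial-point cases into the $S=0$ sub-cases of your limit-point analysis. One small gap in presentation: your Case~(B) argument (``$h(x(s))>0$ for $s$ slightly below $S$'') tacitly assumes $S>0$, so the possibility that $p$ itself lies on the lateral boundary with $S=0$ is not literally covered---but your own computation $N\cdot\nabla h<0$ there immediately shows the curve enters the interior for small $s>0$, so $S>0$ in that case as well.
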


\begin{proof}
Fix $p\in\mathscr{C}_{\phi}^{0}\cap\left\{ u_{\max}\left(z_{0}\right)\leq u<0\right\} $
and let $x\left(s\right)=\Psi\left(p,s\right)$. Set
\[
\mathring{s}=\left\{ \sigma\geq0:x\left(s\right)\,\textrm{is defined and stays in the cone }\mathscr{C}_{\phi}^{0}\,\,\textrm{for every}\,\,s\in\left[0,\sigma\right]\right\} .
\]
Firstly we would like to show that $\mathring{s}>0$. This is clear
when $p$ is in the interior of the cone $\mathscr{C}_{\phi}^{0}$,
so our discussion will focus on the case where $p$ is located on
the boundary of the cone $\mathscr{C}_{\phi}^{0}$. If $p$ is on
the top of the cone $\mathscr{C}_{\phi}^{0}$, i.e., $z\left(p\right)=z_{0}$,
then by the conditions $u\left(p\right)\geq u_{\max}\left(z_{0}\right)$
and (\ref{u_max}), we infer that $u\left(p\right)=u_{\max}\left(z_{0}\right)$
and $p\in B_{z_{0}\tan\phi}$.\footnote{So $p$ is not on the edge of the top side.}
It follows from Lemma \ref{gradient at level maximum point } \footnote{Note that $z_{0}\in\left(0,\hat{z}_{t_{0}}\right)$ (see Proposition
\ref{isolated singularity in the upper-half }) and $u_{\max}\left(z_{0}\right)=u\left(p\right)\in\left[t_{0},0\right)$
(see (\ref{isolated singularity in the upper-half: z_0})).}that $\nabla u\left(p\right)=\left(0,-\lambda\right)$ for some $\lambda>0$.
So the curve immediately enters the interior of the cone $\mathscr{C}_{\phi}^{0}$
after $s=0$ \footnote{Note that $N=\frac{\nabla u}{\left|\nabla u\right|}$ is continuous
at $p$.}, yielding that $\mathring{s}>0$. The other possibility is that when
$p$ is on the lateral boundary of the cone with $z\left(p\right)\in\left(0,z_{0}\right)$.
In view of the asymptotically cylindrical behavior of $u$ on the
boundary of the cone $\mathscr{C}_{\phi}^{0}$, we have 
\[
N\left(p\right)\cdotp N_{\partial\mathscr{C}_{\phi}^{0}}\left(p\right)\approx\cos\phi,
\]
\footnote{Recall that by our assumption at the beginning of Section \ref{neckpinch singularities},
$u$ is $\left(\phi,\epsilon\right)$-asymptotically cylindrical in
$B_{1}^{n-1}\times\left(-1,1\right)$. By the discussion in Theorem
\ref{asymptotically cylindrical}, Definition \ref{cylindrical scale},
and Remark \ref{Reifenberg}, the error of the approximation can be
made uniformly small, say no larger than $\frac{1}{2}\cos\phi$, on
the whole $\partial\mathscr{C}_{\phi}^{0}$ by choosing $\epsilon$
tiny. Moreover, we have 
\[
N\left(p\right)\cdotp N_{\partial\mathscr{C}_{\phi}^{0}}\left(p\right)\rightarrow\cos\phi
\]
as $p$ tends to $0$ along the lateral boundary $\partial\mathscr{C}_{\phi}^{0}$. }where $N_{\partial\mathscr{C}_{\phi}^{0}}$ is the inward unit normal
vector of $\partial\mathscr{C}_{\phi}^{0}$. Thus, in this case the
curve also enters the interior of the cone $\mathscr{C}_{\phi}^{0}$
after $s=0$ and hence we have $\mathring{s}>0$.

Next, we show that if $\mathring{s}<\infty$, then the curve must
converge to $0$ as $s\nearrow\mathring{s}$. Note first that as both
$x\left(s\right)$ and $\frac{dx}{ds}=N\left(x\right)$ are bounded
on $\left[0,\mathring{s}\right)$, we have\footnote{By the compactness of $\mathscr{C}_{\phi}^{0}$, there exists a convergent
sequence $x\left(s_{i}\right)\rightarrow q\in\mathscr{C}_{\phi}^{0}$,
where $s_{i}\nearrow\mathring{s}$. It follows from
\[
\left|x\left(\tilde{s}\right)-x\left(s\right)\right|\leq\int_{s}^{\tilde{s}}\left|N\left(x\left(\theta\right)\right)\right|d\theta\leq\tilde{s}-s
\]
for every $0\leq s<\tilde{s}<\mathring{s}$ that $x\left(s\right)\rightarrow q$
as $s\nearrow\mathring{s}$. The reason that $q\in\partial\mathscr{C}_{\phi}^{0}$
is clear.} 
\[
x\left(s\right)\rightarrow q\in\partial\mathscr{C}_{\phi}^{0}
\]
as $s\nearrow\mathring{s}$. Suppose that $q\neq0$, than by the argument
in the last paragraph, the integral curve $s\mapsto\Psi\left(q,s\right)$
is defined on $\left[0,\delta\right]$ for some $\delta>0$ and it
moves to the interior of the cone $\mathscr{C}_{\phi}^{0}$ for $s\in\left(0,\delta\right]$$.$
Using the semigroup property of the flow induced by a vector field,
we have
\[
\Psi\left(p,\mathring{s}+s\right)=\Psi\left(q,s\right),
\]
for $s\in\left[0,\delta\right]$, contradicting the choice of $\mathring{s}$.
Thus, $q=0$.

Lastly, let us see why $\mathring{s}$ is finite and how to estimate
it. By the type I condition, on $\left[0,\mathring{s}\right)$ we
have
\[
\frac{d}{ds}\left[u\left(x\right)\right]=\nabla u\left(x\right)\cdot N\left(x\right)=\left|\nabla u\left(x\right)\right|\geq\frac{1}{\beta}\sqrt{-u\left(x\right)},
\]
yielding 
\[
\sqrt{-u\left(p\right)}=\,\sqrt{-u\left(x\left(0\right)\right)}\geq\sqrt{-u\left(x\left(s\right)\right)}+\frac{s}{2\beta}\,\geq\frac{s}{2\beta}
\]
for every $s\in\left[0,\mathring{s}\right)$. Therefore, $\mathring{s}\leq2\beta\sqrt{-u\left(p\right)}$.
\end{proof}
Now we are ready to get a desired contradiction to finish this subsection.
By Remark \ref{containment of truncated cone} and Proposition \ref{isolated singularity in the upper-half },
we can choose $\check{z}\in\left(0,z_{0}\right)$ and $\check{p}\in\partial\mathscr{C}_{\phi}^{0}$
with $z\left(\check{p}\right)=\check{z}$, i.e., $\check{p}=\left(\check{z}\check{\omega}\,\tan\phi,\check{z}\right)$
for some $\check{\omega}\in S^{n-2}$, so that 
\[
\check{t}=u\left(\check{p}\right)\in\left(u_{\max}\left(z_{0}\right),0\right).
\]
By (\ref{arrival time on boundary of cone}) we have 
\[
\frac{-\check{t}}{\check{z}^{2}}=\frac{-u\left(\check{z}\check{\omega}\,\tan\phi,\check{z}\right)}{\check{z}^{2}}\approx\frac{\tan^{2}\phi}{2\left(n-2\right)}\leq\frac{\tan^{2}\phi}{n-2},
\]
namely, 
\begin{equation}
\check{z}\geq\sqrt{n-2}\,\cot\phi\sqrt{-\check{t}}.\label{length of cylinder}
\end{equation}
On the other hand, by Proposition \ref{arc length of flow line} the
integral curve $s\mapsto\Psi\left(\check{p},s\right)$ of $N=\frac{\nabla u}{\left|\nabla u\right|}$
starting at $\check{p}$ tends to $0$ as $s\nearrow\check{s}$, where
$\check{s}\in\left(0,2\beta\sqrt{-u\left(\check{p}\right)}\right]$
is the arc length of the curve. In particular, we obtain
\begin{equation}
\check{z}\leq\left|\check{p}\right|\leq\check{s}\leq2\beta\sqrt{-u\left(\check{p}\right)}=2\beta\sqrt{-\check{t}}.\label{speed under type I condition}
\end{equation}
In view of (\ref{length of cylinder}) and (\ref{speed under type I condition}),
a contradiction follows if
\begin{equation}
\phi<\tan^{-1}\left(\frac{\sqrt{n-2}}{2\beta}\right).\label{choice of phi}
\end{equation}

\bigskip{}
Department of Mathematics, National Taiwan University, Taipei 106,
Taiwan. 

\smallskip{}
\textit{E-mail:} \texttt{shguo@ntu.edu.tw}
\end{document}